\numberwithin{equation}{section}
\apptocmd{\thebibliography}{}{}{}
\newcommand{\supp}{\mbox{supp}}
\theoremstyle{plain}
\newtheorem{theorem}{Theorem}[section]
\newtheorem{lemma}[theorem]{Lemma}
\newtheorem{proposition}[theorem]{Proposition}
\newtheorem{reminder*}{[theorem]Reminder}
\newtheorem{details*}[theorem]{Details}
\newtheorem{comm*}{Comment}
\newtheorem{definition}[theorem]{Definition} 
\newtheorem{definition*}{[theorem]Definition}
\newtheorem{notation*}{Notation}
\newtheorem{remark}[theorem]{Remark}
\newcommand{\der}{\textup{d}}
\date{}
\title{Long-time asymptotics for coagulation equations with injection that do not have stationary solutions}
\author[1]{Iulia Cristian}
\author[2]{Marina A. Ferreira}
\author[3]{Eugenia Franco}
\author[4]{Juan J. L. Vel\'{a}zquez}
\affil[1]{Institute for Applied Mathematics, University of Bonn, Endenicher Allee 60, 53115 Bonn, Germany
\href{mailto:cristian@iam.uni-bonn.de}{cristian@iam.uni-bonn.de}}
\affil[2]{Department of Mathematics and Statistics, University of Helsinki, P.O. Box 68, FI-00014 Helsingin yliopisto, Finland

\href{mailto:marina.ferreira@helsinki.fi}{marina.ferreira@helsinki.fi}}
\affil[3]{Institute for Applied Mathematics, University of Bonn, Endenicher Allee 60, 53115 Bonn, Germany
\href{mailto:franco@iam.uni-bonn.de}{franco@iam.uni-bonn.de}}
\affil[4]{Institute for Applied Mathematics, University of Bonn, Endenicher Allee 60, 53115 Bonn, Germany
\href{mailto:velazquez@iam.uni-bonn.de}{velazquez@iam.uni-bonn.de}}
\begin{document}

\maketitle
\abstract{
In this paper we study a class of coagulation equations including a source term that injects in the system clusters of size of order one.
 The coagulation kernel is homogeneous, of homogeneity $\gamma < 1$, such that $K(x,y)$ is approximately $x^{\gamma + \lambda} y^{-\lambda}$, when $x$ is larger than $y$.
 We restrict the analysis to the case $\gamma + 2 \lambda \geq 1 $. In this range of exponents, the transport of mass toward infinity is driven by collisions between particles of different sizes. 
 This is in contrast with the case considered in \cite{ferreira2021self} where $\gamma + 2 \lambda <1$. In that case,
 the transport of mass toward infinity is due to the collision between particles of comparable sizes. 
 In the case $\gamma+2\lambda\geq 1$, the interaction between particles of different sizes leads to an additional transport term in the coagulation equation that approximates the solution of the original coagulation equation with injection for large times.
We prove the existence of a class of self-similar solutions for suitable choices of $\gamma $ and $\lambda$ for this class of coagulation equations with transport.
We prove that for the complementary case such self-similar solutions do not exist.
}
\\

\textbf{Keywords:} coagulation equations, source term, long-time behavior.
\tableofcontents


\section{Introduction}
\subsection{Aim of the paper} 
In this paper we study the long-time behavior of the coagulation equation with injection:
\begin{equation} \label{eq:coag with inj} 
\partial_{t}f\left( t, x\right)  =\mathbb{K}[f](t,x)+\eta\left(  x\right),
\end{equation}
where
\begin{equation}\label{coagulation-kernel-def}
 \mathbb{K}[f](t,x):=\frac{1}{2}\int_{0}^{x}K\left(  x-y,y\right)
f\left( t, x-y\right)  f\left(t,  y\right)  \der y-\int_{0}^{\infty}K\left(
x,y\right)  f\left(t,  x\right)  f\left( t, y\right)  \der y
\end{equation}
and where $\eta\geq 0$. We will assume in all the following that $\eta(x) \not\equiv 0$ and that it is either compactly supported or it decays fast enough with the cluster size.
 
The study of problems with this form arises naturally in problems of aerosols and atmospheric science (\cite{Friedlander,Vehkamaki+others,Vehkamaki}). In this context the function $f(t,x)$ denotes
the density of clusters with size $x$ at time $t$.

The collision operator $\mathbb{K}[f](t,x)$ in (\ref{coagulation-kernel-def}) was introduced by Smoluchowski (see \cite{Smoluchowski}). The kernel $K(x,y)$ encodes the information about the mechanism driving the coagulation of clusters. 
In this paper we are interested in kernels arising in atmospheric science applications. A common feature of these kernels is the homogeneity property. Indeed, in many cases the rate of aggregation scales like a power law with the cluster size. This means that the coagulation kernel $K(x,y)$ satisfies
\begin{equation}\label{kernel-homogeneity}
    K(a x,a y)=a^{\gamma}K(x,y) \textup{ for } a >0 \textup{ and } (x,y)\in\mathbb(0,\infty)^{2},
\end{equation}
for some $\gamma\in\mathbb{R}$. 

On the other hand, since the coagulation process does not depend on the order in which the clusters of size $x$ and $y$ are chosen we have the symmetry property
\begin{equation}\label{kernel-symmetry}
    K(x,y)=K(y,x) \textup{ for } (x,y)\in(0,\infty)^{2}.
\end{equation}
We will assume that the coagulation kernel $K$ satisfies 
\begin{equation} \label{kernel bounds} 
c_2 \left[  \frac{x^{\gamma+\lambda}}{y^{\lambda}}+\frac
{y^{\gamma+\lambda}}{x^{\lambda}} \right]  \leq K\left(  x,y\right)   \leq c_1 \left[  \frac{x^{\gamma+\lambda}}{y^{\lambda}}+\frac
{y^{\gamma+\lambda}}{x^{\lambda}} \right] \textup{ and } \gamma+2\lambda\geq 0,
\end{equation}
with $0<c_{2}\leq c_{1}<\infty$, where $\gamma$ is the homogeneity parameter introduced in (\ref{kernel-homogeneity}) and $\lambda\in\mathbb{R}$.
 The polynomial bounds \eqref{kernel bounds} are satisfied by many of the most relevant collision kernels arising in aerosol and atmospheric science, such as the \textit{diffusive} coagulation kernel and the \textit{free molecular} kernel (see for instance \cite{Friedlander}).

Notice that the condition $\gamma+2\lambda\geq 0$ in \eqref{kernel bounds} does not imply any loss of generality. This can be seen from the fact that the function $x^{\alpha}y^{\beta}+x^{\beta}y^{\alpha}$ can be written as $\left[  \frac{x^{\gamma+\lambda}}{y^{\lambda}}+\frac
{y^{\gamma+\lambda}}{x^{\lambda}} \right]$ with $\gamma+\lambda=\max\{\alpha,\beta\}$ and $-\lambda=\min\{\alpha,\beta\}$.

Since we will consider solutions of (\ref{eq:coag with inj}), (\ref{coagulation-kernel-def}) in which $f(t, \cdot)$ is a Radon measure, it is convenient to impose the following condition on the kernel $K$:
\begin{equation}\label{kernel cont}
    K\in\textup{C}((0,\infty)^{2}).
\end{equation}

It is well known that part or all the mass of the solutions of (\ref{eq:coag with inj}), (\ref{coagulation-kernel-def}) can escape towards $x=\infty$ in finite, or even zero time. This phenomenon is known as \textit{gelation}. For a more detailed analysis on this matter, see for example \cite{bookBanasiak,gelation_example}. In order to guarantee that gelation does not take place, we will assume in all the following that
\begin{equation}\label{avoid_gelation_parameters}
    \gamma<1 \textup{ and } \gamma+\lambda<1.
\end{equation}
See  \cite{davies1999smoluchowski} for a more detailed discussion on the gelation regimes. 

We expect that gelation does not take place under the weaker assumptions $\gamma\leq 1, \gamma+\lambda\leq 1$ (see \cite{bookBanasiak}). However, in the critical cases $\gamma=1$ or $\gamma+\lambda=1$, the self-similar solutions are not defined using power laws to scale the particle sizes but most likely using exponential functions. Given that the analysis of these solutions would require arguments different from the ones in this paper, we will not consider this case here. 

The existence of solutions for equation \eqref{eq:coag with inj}, \eqref{coagulation-kernel-def} has been considered in \cite{dubovskiui1994mathematical} and in \cite{escobedo2006dust}. In this paper we study the long time  behavior of the solutions of equation \eqref{eq:coag with inj}.
Due to the presence of the source we can expect the solutions to (\ref{eq:coag with inj}), (\ref{coagulation-kernel-def}) to converge to a stationary non-equilibrium solution $\overline{f}=\overline{f}(x)$ as $t\rightarrow\infty,$ i.e. to a solution of
\begin{equation}\label{stationary_equation_initial}
    \mathbb{K}[f](x)+\eta(x)=0.
\end{equation}
However, it turns out that if $\eta\not\equiv 0$ and $\gamma+2\lambda\geq 1,$ a solution for (\ref{stationary_equation_initial}) does not exist. In \cite{hayakawa1987irreversible}, the discrete stationary coagulation model
\begin{equation}\label{discrete_equation_with_injection}
    \sum_{k=1}^{n-1}K_{k,n-k}f_{k}f_{n-k}- \sum_{k=1}^{\infty}K_{k,n}f_{k}f_{n}+\overline \delta_{k,n}=0, \textup{ } k\geq 1,
\end{equation}
 has been studied for the explicit coagulation kernel
\begin{equation}
    K_{k,n}=k^{\gamma + \lambda }n^{-\lambda }+k^{-\lambda}n^{\gamma + \lambda }, \textup{ } k,n\in\mathbb{N}
\end{equation} 
when $|\gamma + \lambda |<1$, $|\lambda |<1 $, $|\gamma |< 1$. 
 Formal asymptotics for the large size behavior of the solutions $f_k $ of \eqref{discrete_equation_with_injection} has been obtained in  \cite{hayakawa1987irreversible}. The results in that paper indicate that a solution of (\ref{discrete_equation_with_injection}) exists if and only if $\gamma+2\lambda<1$.
 
In the case of general kernels satisfying the assumptions (\ref{kernel-symmetry}),
(\ref{kernel bounds}), (\ref{kernel cont}), and source terms $\eta$ decreasing
fast enough, it has been proved in \cite{ferreira2019stationary} that the solutions of (\ref{stationary_equation_initial}) (as well as its discrete counterpart) exist, if and only if $\gamma+2\lambda<1.$

It is worth to remark that it has been proved in \cite{ferreira2019stationary} that the solutions of  (\ref{stationary_equation_initial}) can be estimated, up to a multiplicative constant, from above and below by the power law $x^{- \frac{3+\gamma }{2}}$ for large values of $x$.

Since in the case $\gamma + 2 \lambda \geq 1 $ a stationary solution of \eqref{stationary_equation_initial} does not exist, we cannot expect the solutions to (\ref{eq:coag with inj}), (\ref{coagulation-kernel-def}) to behave as the stationary solution $\overline{f}$ of
(\ref{stationary_equation_initial}) as $t\rightarrow\infty$ for $x$ of order one. 
It is then natural to ask what is the long time asymptotics of the solutions
to (\ref{eq:coag with inj}), (\ref{coagulation-kernel-def}) for large values of $t$ and $x.$ The scaling hypothesis that has been extensively used in the study of coagulation equations suggests that the mass of the particle distributions $f\left(t,x\right)$ is concentrated in cluster sizes $x$ of order $t^{p}$ for a suitable exponent $p$ that would be determined from dimensional considerations, which take into account the way in which the mass rescales in time. In the case of kernels $K\left(  x,y\right)=x^{\gamma + \lambda }y^{-\lambda}+y^{-\lambda}x^{\gamma + \lambda }$, $ -1 <  \lambda <0$ with $0\leq\gamma + \lambda<1$ and $\gamma <1$, it was suggested in \cite{davies1999smoluchowski}, using a combination of matched asymptotics and
numerical simulations, that the long time behavior of the solutions of (\ref{eq:coag with inj}), (\ref{coagulation-kernel-def}) is given by self-similar solutions with the form
\begin{equation}
f_s\left(  t,x\right)  =\frac{1}{t^{\frac{3+\gamma  }{1-\gamma}}}\Phi\left(
\xi \right)  \ ,\ \ \xi=\frac{x}{t^{\frac
{2}{1-\gamma }}}. \label{SelfSimFormOther}
\end{equation}

The approximation (\ref{SelfSimFormOther}) can be expected to be valid for large
cluster sizes, i.e. $x\gg1$. We will use from now the notation with $x\gg1$ to indicate large cluster sizes $x$.
In the case considered in \cite{davies1999smoluchowski} we have that $\gamma+2\lambda<1$ and
therefore stationary solutions $f_{s}$ solving (\ref{stationary_equation_initial}) exist. In this case, (\ref{stationary_equation_initial}) and (\ref{SelfSimFormOther}) suggest that $\Phi\left(
\xi\right)$ behaves for small values of $\xi$ as $K\xi^{-\frac{3+\gamma}{2}}$, for a suitable constant $K>0.$ More precisely, plugging (\ref{SelfSimFormOther}) in (\ref{eq:coag with inj}), (\ref{coagulation-kernel-def}), it follows that $\Phi$ solves
\begin{equation}
-\frac{2}{1-\gamma}\xi\Phi_{\xi}-\frac{3+\gamma}{1-\gamma}\Phi=\mathbb{K}
\left[  \Phi\right], \label{SelfSimFlux}
\end{equation}
where $\Phi$ satisfies the following boundary condition at $\xi\rightarrow0$ that guarantees that there is a constant flux of particles from the origin: 
\begin{equation}
\lim_{R\rightarrow0}\int_{0}^{R}\der\xi\int_{R-\xi}\der\eta K\left(  \xi
,\eta\right)  \xi\Phi\left(  \xi\right)  \Phi\left(  \eta\right)  =J,
\label{SelfSimFlBC}
\end{equation}
with $J=\int_{0}^{\infty}x\eta\left(  x\right)  \der x.$
The existence of solutions of equation
(\ref{SelfSimFlux}) satisfying the constant flux solution condition at $\xi=0$, \eqref{SelfSimFlBC}, 
has been rigorously proved in \cite{ferreira2021self} for kernels $K$
satisfying (\ref{kernel-symmetry}), (\ref{kernel bounds}), (\ref{kernel cont}),
(\ref{avoid_gelation_parameters}) with $\gamma + 2 \lambda<1$.

The picture described above, which combines the stationary behavior $\overline{f}$, (cf. (\ref{stationary_equation_initial})) for cluster sizes $x$ of order one, and the self-similar behavior (\ref{SelfSimForm}) for large cluster sizes, provides a rather complete description of the long time behavior of the solutions to (\ref{eq:coag with inj}), (\ref{coagulation-kernel-def}) in the case $\gamma+2\lambda<1.$
However, the same scenario cannot yield a description of the long time asymptotics of the solutions to (\ref{eq:coag with inj}), (\ref{coagulation-kernel-def}) if $\gamma+2\lambda\geq1,$ because, as explained above, in this case a solution of (\ref{stationary_equation_initial})
does not exist.

Notice that the existence/non-existence of solutions to \eqref{stationary_equation_initial} is related to the existence of stationary solutions of \eqref{stationary_equation_initial} yielding a constant flux of particles with the form of a power law, i.e. $\overline{f}(x)= cx^{- \frac{\gamma + 3 }{2}}$. These solutions exist for $\gamma + 2 \lambda < 1 $ and they do not exist for $\gamma + 2 \lambda \geq 1 $. In the framework of the wave turbulence it would be stated that the kernels $K$ with $\gamma + 2 \lambda < 1 $ satisfy the locality property, while the kernels $K$ with $\gamma + 2 \lambda \geq 1 $ do not have the locality property, see \cite{zel2002physics}. 

In this paper we study the long term behavior of the
solutions to (\ref{eq:coag with inj}), (\ref{coagulation-kernel-def}) in the case $\gamma + 2 \lambda \geq 1 $ (and the non-gelling regime $\gamma < 1 $ and $\gamma + \lambda < 1$) when the kernel is homogeneous, hence it can be expressed as 
\begin{equation}
K\left(  x,y\right)  =\left(  x+y\right)  ^{\gamma} F \left(  \frac{x}%
{x+y}\right)  \ \ ,\ \ F \left(  s\right)  =F\left(  1-s\right)
\ \ \text{for }s\in\left(  0,1\right).  \label{KernPhi}%
\end{equation}
Notice that \eqref{kernel bounds} implies that
$\overline c_1 \leq F(s) \leq \overline c_2 $
for some $\overline c_1 , \overline c_2 >0$.
We will assume in the following a condition that is more restrictive than \eqref{kernel bounds}, namely 
\begin{equation}
\lim_{s\rightarrow0^{+}}\left[  s^{\lambda}F \left(  s\right)  \right]  =1. 
\label{PhiAsymp}%
\end{equation}
In order to prove that in the case $\gamma + 2 \lambda \geq 1$ there are no solutions of \eqref{stationary_equation_initial}, the main idea used in  \cite{ferreira2019stationary} is based on the fact that, for this range of exponents, the transfer of clusters
of size $x$ of order one towards very large cluster sizes is so fast
that the concentration of clusters with size of order one would become zero.

In the case of time dependent solutions having initially finite mass, this
increases linearly due to the fact that
\[
\partial_t \left( \int_0^\infty x f(t, x) \der x \right) = \int_0^\infty x \eta(x) \der x .
\] 
For large times, due to the
increase of the average cluster size, we might expect that, if $\gamma
+2\lambda\geq1$, there should be a fast transport of the newly injected
clusters of order one towards much larger cluster sizes. This almost instantaneous transport
results in small concentrations of clusters of order one for large times.

We now remark that the part of the coagulation operator
which describes the coagulation between particles of different sizes can be
approximated by means of a transport operator in the space of cluster sizes.
These arguments, that will be described in detail using formal asymptotics in
Section \ref{sec:heuristics}, show that the solutions of
(\ref{eq:coag with inj}), (\ref{coagulation-kernel-def}) for large cluster sizes can be
approximated by means of the following equation if $\gamma+2\lambda\geq
1$
\begin{equation}
\partial_{t}f\left(  t,x\right)  +\frac{\partial_{x}\left(  x^{\gamma+\lambda
}f\left(  t,x\right)  \right)  }{\int_{0}^{\infty}z^{\gamma+\lambda}f\left(
t,z\right) \der z}=\mathbb{K}\left[  f\right]  (t,x),\ \ \text{for }x\gg1.
\label{coagTransp}%
\end{equation}

We emphasize that the non-local transport term $\frac{\partial_{x}\left(
x^{\gamma+\lambda}f\left(  t,x\right)  \right)  }{\int_{0}^{\infty}%
z^{\gamma+\lambda}f\left(  t,z\right)  \der z}$ is a consequence of the presence
of the source $\eta\left(  x\right)  $ in (\ref{eq:coag with inj}). The fact
that the contributions to the coagulation operator $\mathbb{K}\left[
f\right]  $ which are due to the aggregation of particles with very different
sizes can be approximated by a differential operator has been extensively used in the literature of coagulation equations
(cf. \cite{Friedlander}). The resulting first order terms are often
referred to represent \textit{heterogeneous condensation} (cf. \cite{Friedlander}). 
On the other side, the term $\mathbb{K}\left[  f\right]  $ in
(\ref{coagTransp}) describes the aggregation of clusters of comparable sizes.

The solutions of \eqref{coagTransp} are expected to describe the asymptotic behavior of the solution of equation \eqref{eq:coag with inj} both when $\gamma + 2 \lambda > 1$ and when $\gamma + 2 \lambda =1 $, even if in these two cases we have two slightly different scenarios.
Namely, when $\gamma +2\lambda >1$, we will have that if $x$ is of order $1 $, then $f(t, x) \rightarrow 0 $ as $t \rightarrow \infty $, while when $\gamma + 2 \lambda =1 $ we will have that if $x$ is of order $1 $, then  $f(t, x) \rightarrow f_s(x)$ as $t \rightarrow \infty $, where $f_s $ is a solution of 
\begin{equation}\label{eq:stationary}
\mathbb K[f](x) + \eta(x)- x^{-\lambda} f(x)=0, 
\end{equation}
(compare with \eqref{stationary_equation_initial}). Due to the presence of the term $x^{-\lambda}f(x)$, equation \eqref{eq:stationary} might have solutions when $\gamma+2\lambda=1$.

The scaling properties of (\ref{coagTransp}) suggest that this equation is
compatible with the existence of self-similar solutions for equation \eqref{coagTransp} with the form
\begin{equation} \label{SelfSimForm}
f_s\left(t, x \right)  =\frac{1}{t^{\frac{3+\gamma}{1-\gamma}}}\Phi\left(
\frac{x}{t^{\frac{2}{1-\gamma}}}\right).
\end{equation} 
Here the self-similar profile $\Phi$ satisfies the following equation, obtained by substituting equality \eqref{SelfSimForm} in equation \eqref{coagTransp} and using the self-similar change of variables $\xi=\frac{x}{t^{\frac{2}{1-\gamma}}}$,
\begin{equation}\label{eq:ss} 
-\frac{3+\gamma}{1-\gamma}\Phi\left(  \xi\right)  -\frac{2}{1-\gamma}\xi \partial_\xi
\Phi\left(  \xi\right)  +\frac{1}{\int_{0}^{\infty}\eta
	^{\gamma+\lambda}\Phi\left(  \eta\right)  \der \eta}\frac{\partial}{\partial\xi
}\left(  \xi^{\gamma+\lambda}\Phi\left(  \xi\right)  \right)  =\mathbb{K}%
\left[  \Phi\right]  \left(  \xi\right)  \ \ ,\ \ \xi>0. 
\end{equation}

 The main result of this paper is to determine the range
of exponents $\gamma$ and $\lambda$ satisfying $\gamma+2\lambda\geq1$ and the non-gelation conditions \eqref{avoid_gelation_parameters} for which self-similar solutions of
(\ref{coagTransp}) with the form (\ref{SelfSimForm})
exist (see Figure \ref{fig1} for a classification of these exponents). Specifically, we will prove the following. Suppose that $\gamma
+2\lambda\geq1$  and that (\ref{avoid_gelation_parameters}) holds. Then

\begin{itemize}
\item If $\gamma>-1,$ there exists at least one self-similar solution of
(\ref{coagTransp}) with the form (\ref{SelfSimForm}).
\item If $\gamma\leq -1$ and $\gamma +2\lambda >1$ , no solutions of (\ref{coagTransp}) with the form
(\ref{SelfSimForm}) exist (See Figure \ref{fig1}).
\item If $\gamma\leq -1$ and $\gamma +2\lambda =1$, we prove that there are no self-similar solutions $f_{s}$ of the form \eqref{SelfSimForm} such that $\int_0^1 x^{-\lambda} \Phi(x) \der x  < \infty$.
\end{itemize}

The meaning of the condition $\int_0^1 x^{-\lambda} \Phi(x) \der x < \infty$ is that the number of clusters removed by the coagulation process in any bounded time interval is finite.
We therefore do not exclude the existence of a self-similar solution of equation \eqref{coagTransp} with $\int_0^1 x^{-\lambda} \Phi(x) \der x = \infty$. 

A remarkable property of the self-similar solutions of (\ref{coagTransp}), that we constructed in this paper, is that they vanish identically for $0<\xi\ < 
\rho(M_{\gamma+\lambda}):=\left( \frac{1-\gamma}{2 M_{\gamma +\lambda}}\right)^{\frac{1}{1-\gamma -\lambda} }$, where
\begin{align*}
    M_{\gamma +\lambda} :=\int_{(0,\infty)}\xi^{\gamma+\lambda}\Phi(\xi) \der\xi.
\end{align*}
The fact solutions $\Phi$ of (\ref{eq:ss}) vanish in an interval $(0,\rho(M_{\gamma+\lambda}))$ means that, for large
times $t,$ the injected particles are transferred almost instantaneously to
clusters with sizes $x\geq\rho(M_{\gamma+\lambda}) t^{\frac{2}{1-\gamma}}.$ Moreover, the fraction
of clusters with sizes $x< \rho(M_{\gamma+\lambda}) t^{\frac{2}{1-\gamma}}$ becomes negligible
for very long times. The existence of this "minimal" cluster size for large
times is a remarkable feature that, to our knowledge, has not been observed in the literature on self-similar solutions for the coagulation equation (see for instance \cite{escobedo2006dust}, \cite{escobedo2005self}, \cite{fournier2006local}, \cite{menon2004approach}, \cite{hungtruong}).
In particular, it is worth to notice that this
behavior of the self-similar solutions is very different from the one
exhibited by the self-similar solutions obtained in the case $\gamma
+2\lambda<1$ in \cite{ferreira2021self}.

The results that we obtain in this paper in the critical case $\gamma+2\lambda=1$ are more fragmentary than those obtained for $\gamma+2\lambda>1$. In the case $\gamma+2\lambda=1$, we only prove the existence of a solution $\Phi$ of (\ref{eq:ss}) for $\gamma>-1$ with $\Phi=0$ on $(0,\rho(M_{\gamma+\lambda}))$, but we do not prove that any solution vanishes in the interval $(0,\rho(M_{\gamma+\lambda}))$.

On the other hand, we will prove in this paper that the function $\Phi$ which
describes the self-similar profile in (\ref{SelfSimForm}) decreases
exponentially as $\xi\rightarrow\infty$ in the same manner as the self-similar
solutions constructed in \cite{ferreira2021self} and as it usually happens for
the self-similar solutions of coagulation equations in problems without
injection, see \cite{fournier2006local} and \cite{escobedo2006dust}. 

As indicated above, if $\gamma\leq -1$ and $\gamma+ 2\lambda >1$, there are no self-similar
solutions of (\ref{coagTransp}) with the form (\ref{SelfSimForm}) and, if $\gamma\leq -1$ and $\gamma+ 2\lambda =1$, there are no self-similar
solutions of (\ref{coagTransp}) with the form (\ref{SelfSimForm}) satisfying $\int_0^1 x^{-\lambda} \Phi(x) \der x <\infty$. It is
natural to ask what is the long time asymptotics of the solutions of
(\ref{eq:coag with inj}), (\ref{coagulation-kernel-def}) in this case. This question will be
the subject of study of a future work.

\begin{figure}[H]%
\centering
\includegraphics[width=0.7\textwidth, height=12cm]{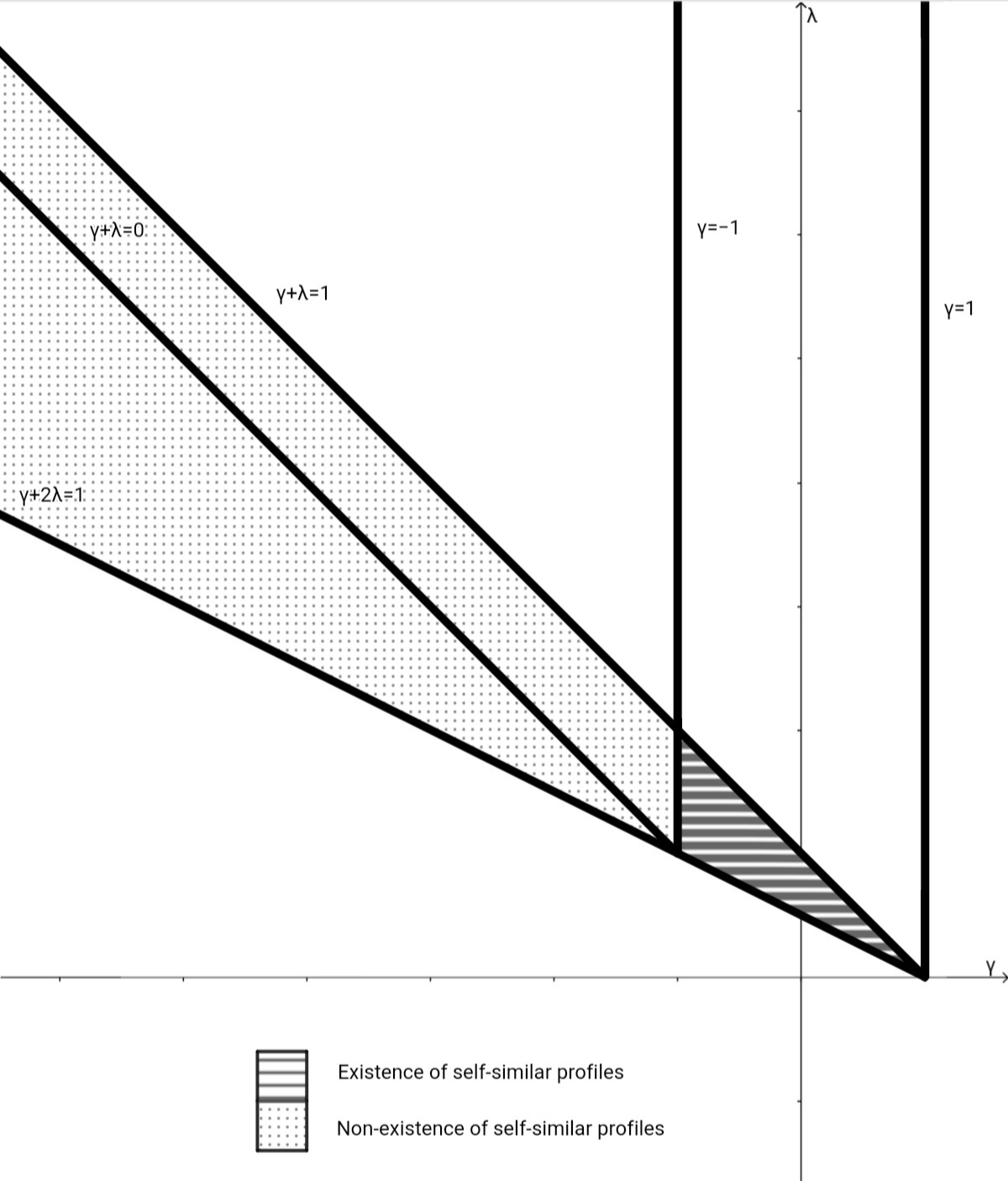}
\caption{
Coefficients for existence and  non-existence of self-similar\linebreak profiles  in the case $\gamma+2\lambda>1$}
\label{fig1}
\end{figure}

In both cases the
coagulation term $\mathbb{K}\left[  f\right]  $ in (\ref{coagTransp}) which
describes the aggregation of particles with comparable sizes is negligible for
large times, and the long time behavior of the distribution of clusters is
determined by the coagulation of particles of size $x$ of order one with large
particles. The main difference between the cases $\gamma+\lambda \geq 0$
and $\gamma+\lambda < 0$  arises from the fact that in the second case
the transfer of clusters (not monomers) from the region where $x$ is of order
one to $x\gg1$, \textit{heterogeneous condensation}, is relevant. 

There are several results in the physical literature which are related, and are consistent with the ones in this paper, see for instance \cite{BC12}, \cite{KC12}, \cite{KMR98}, \cite{KMR99}. 
In \cite{BC12} a coagulation model with kernels satisfying \eqref{kernel bounds} with parameters $\gamma , \lambda $ such that $\gamma + 2 \lambda \geq 1 $ and including also a source term and a removal of particles term has been studied. In that problem, the cluster concentrations as $t \rightarrow \infty $ are determined by the coalescence of particles of very different sizes.
It is then possible to approximate the coagulation-removal model for large clusters by means of the equation
\[
\partial_t f(t, x)= - \partial_x \left( x^{\gamma + \lambda} f(t, x) \right) +\int_1^{x/2 } y^{1-\lambda} f(t, y ) \der y - f(t,x) \int_x^K y^{\gamma + \lambda} f(t, y) \der y,
\]
where $K$ is the maximum particle size in the system. 
The numerical simulations in \cite{BC12} show that the concentration of cluster sizes of order one approach to a stationary solution that converges to zero if $K$ is sent to infinity.

In \cite{KC12} a coagulation model with injection and with kernels satisfying \eqref{kernel bounds} with $\gamma =0$ and $\lambda \in (1/2, 1] $ has been considered. Numerical simulations and formal computations in \cite{KC12} suggest that the cluster concentration for clusters of order one tends to zero as $t \rightarrow \infty$. 
In addition, it is suggested that the concentration for large clusters are described by a  self-similar solution. 

In \cite{KMR98}, \cite{KMR99} coagulation equations with injection are considered with kernels satisfying \eqref{kernel bounds} with  $-\lambda =\gamma \leq -1 $. It is seen there that for $\gamma + 2 \lambda \geq 1 $, the solutions of the corresponding coagulation equation behave in a non self-similar manner and decay logarithmically. This is in agreement with the non-existence of self-similar solutions that we obtained in this paper for $\gamma \leq -1$.

\subsection{Notation and plan of the paper}

We use the notation $\mathbb R_* :=(0, \infty )$ and $\mathbb R_+ :=[0, \infty ). $
Given an interval $I \subset \mathbb R_+ $ we denote with $C_c(I)$ the Banach space of the functions on $I$ that are continuous and compactly supported. We endow the space $C_c(\mathbb R_*)$ with the supremum norm denoted with $\|\cdot \|_\infty.$
We denote with $\mathcal M_+ (I)$ the space of the non-negative Radon measures on $I$. 
Given a measure $\mu \in \mathcal M_+(I)$ we  denote with $\|\mu \|_{TV}$ the total variation norm of $\mu.$
For a compact interval $I\subseteq\mathbb{R}_{\ast}$ and two bounded measures $\mu,\nu,$ we denote the Wasserstein metric by $W_{1},$ namely
 \begin{align}
W_{1}(\mu,\nu)=\sup_{||\varphi||_{\textup{Lip}}\leq 1}\int_{I}\varphi(x)(\mu-\nu)(\der x),\label{W metric}
\end{align}
 where the supremum is taken over the Lipschitz functions and where $\| f \|_{\textup{Lip}}=\|f\|_\infty + [f]_{\textup{Lip}}$, with $[f]_{\textup{Lip}}=\sup_{\underset{x\neq y}{x,y\in I}}\frac{|f(x)-f(y)|}{|x-y|}$.

To keep the notation lighter, we denote with $C$ and $c$ constants that might change from line to line in the computations. In addition, we use the notation $f\lesssim g,$ for two functions $f,g$, to mean that there exists a constant $C>0$ such that $f\leq C g$.

Moreover, given a measure $\mu$, we denote with $M_\alpha(\mu)$ the $\alpha$ moment of $\mu $, i.e.
\[
M_{\alpha} (\mu):=\int_{\mathbb R_*} x^\alpha \mu(\der x ). 
\] 
To simplify the notation, in some cases, we write $M_\alpha $ instead $M_\alpha (\mu) $ if the choice of the measure $\mu$ is clear in the argument.

We use the notation $f \sim g $ as $x \rightarrow x_0$ to indicate that $\lim_{x \rightarrow x_0} \frac{f(x)}{g(x)} =1$, while we use the notation $f \approx g $ to say that there exists a constant $M>0$ such that 
\[
\frac{1}{M } \leq \frac{f}{g} \leq M. 
\]

As previously mentioned, we use the notation $x\gg 1$ for large cluster sizes $x$. Additionally, for cluster sizes $x,y$,  we denote  $x\gg y$ or $y\ll x$ to mean that $x$ is much larger than $y$. For two terms, $A$ and $B$, we use the notation $A\simeq B$  to mean informally that $A$ can be approximated in terms of $B$ in the region under consideration in the respective formula.

The paper is organized as follows.
In Section \ref{sec:heuristics} we present a heuristic motivation to study the existence of the self-similar solutions considered in this paper.
In Section \ref{sec:main} we present the main results of the paper regarding the existence and non-existence of a self-similar profile and its properties.
In Section \ref{sec:ideas} we explain the main ideas behind the proofs of existence and non-existence, skipping the technical difficulties of the proofs. 
Section \ref{sec:exifstence} deals with the proof of the existence of a self-similar solution for equation \eqref{eq:ss} when $\gamma > -1$. We also prove in this section that the solution decays exponentially for large values. Section \ref{sec:non existence} deals with the non-existence of self-similar solutions for equation \eqref{eq:ss} when $\gamma \leq -1$.

\section{Asymptotic description of the long time behavior
\label{sec:heuristics}}

\subsection{The case $\gamma + 2 \lambda > 1$}

In this section we describe the long time asymptotics of the solutions of
\eqref{eq:coag with inj}, \eqref{coagulation-kernel-def} if $\gamma+2\lambda> 1$ using
formal asymptotic arguments.

As indicated in the Introduction, we expect $f\left(  t, x\right)  $ to
converge to zero as $t\rightarrow\infty$ for $x$ of order one. Therefore, the
contribution due to the term $\frac{1}{2}\int_{0}^{x}K\left(  x-y,y\right)
f\left(  t,x-y\right)  f\left(  t,y\right)  \der y$ can be expected to be
negligible in this region since this term is quadratic in $f$ and we can
expect the linear term $\int_{0}^{\infty}K\left(  x,y\right)   f\left(  t,y\right)  \der y f\left(
t,x\right) $ to give a larger contribution. We will
check that these assumptions are self-consistent, in the sense that they will
predict an asymptotic behavior for $f$ for which the assumptions made hold.

We examine the asymptotic behavior of the linear term $\int_{0}^{\infty
}K\left(  x,y\right)   f\left(  t,y\right)  \der y f\left(  t,x\right)$
when $x$ is of order one. 
Due to the effect of the coagulation, we expect the
distribution $f\left(  t,y\right)  $ to be concentrated for long times in the
larger cluster sizes $y$ as $t\rightarrow\infty.$ Using the assumptions
(\ref{KernPhi}) and (\ref{PhiAsymp}) we obtain the following asymptotic
behavior of $K\left(  x,y\right)  $ for $x\ll y$%
\begin{equation}
K\left(  x,y\right) \simeq  y^{\gamma+\lambda}x^{-\lambda}\label{B1}.%
\end{equation}

We then expect to have the following asymptotics as $t\rightarrow\infty,$ due
to the concentration of $f$ in the large cluster sizes%
\[
\int_{0}^{\infty}K\left(  x,y\right)  f\left(  t,y\right)  \der y\sim x^{-\lambda
}\int_{0}^{\infty}y^{\gamma+\lambda}f\left(  t,y\right)  \der y\ \ \text{as\ \ }%
t\rightarrow\infty
\]
for $x$ of order one.
Then, considering the dominant terms in \eqref{eq:coag with inj}, \eqref{coagulation-kernel-def} for $x$ of order one, we obtain the following equation%
\begin{equation}
\partial_{t}f\left(  t,x\right)  =-x^{-\lambda}f\left(  t,x\right)  \int
_{0}^{\infty}y^{\gamma+\lambda}f\left(  t,y\right)  \der y+\eta\left(  x\right).
\label{A1}%
\end{equation}

As explained in the introduction, if $\gamma+2\lambda> 1,$ 
since steady
states describing the cluster concentrations with $x$ of order one do not exist, we expect
to have $f\left(  t,x\right)  \rightarrow0$ as $t\rightarrow\infty.$ This
suggest that we should have $M_{\gamma+\lambda}=\int_{0}^{\infty}%
y^{\gamma+\lambda}f\left(  t,y\right)  \der y\rightarrow\infty$ as $t\rightarrow
\infty.$ Suppose that $\partial_{t}M_{\gamma+\lambda}\ll M_{\gamma+\lambda}$
as $t\rightarrow\infty$ (something that would happen if $M_{\gamma+\lambda}$
behaves like a power law, as we will see to be the case). Then (\ref{A1})
implies the following asymptotic behavior for $f\left(  t,x\right)  $%
\begin{equation}
f\left(  t,x\right)  \sim\frac{x^{\lambda}\eta\left(  x\right)  }{\int
_{0}^{\infty}y^{\gamma+\lambda}f\left(  t,y\right)  dy}\ \ \text{as\ \ }%
t\rightarrow\infty\label{A2}%
\end{equation}
for $x$ of order one. We notice that the higher order contributions to
(\ref{A2}) due to the contributions of the term $\frac{1}{2}\int_{0}%
^{x}K\left(  x-y,y\right)  f\left(  t,x-y\right)  f\left(  t,y\right)  \der y$ are
of order $\frac{1}{ \left( M_{\gamma+\lambda}\right)^{2}}$ or smaller.
Therefore, these contributions will be neglected in the following.

The equation (\ref{A2}) yields an approximate formula for the concentration of
clusters with $x$ of order one. We now approximate the part of the coagulation
operator which is due to the aggregation of particles with size one with very
large particles. To this end we introduce a characteristic length $L=L\left(
t\right)  \gg1$ such that we can approximate $f\left(  t,y\right)  $ by means
of (\ref{A2}) for $y\leq L.$ We attempt to approximate the evolution of
the distribution $f\left(  t,x\right)  $ for $x\gg L.$
Then, the coagulation
operator in (\ref{coagulation-kernel-def}) can be approximated, using the symmetry
properties of the first term on the right of (\ref{coagulation-kernel-def}), as follows%
\begin{align*}
\mathbb{K}\left[  f\right]  \left(  t,x\right)    & =\frac{1}{2}\int_{0}%
^{x}K\left(  x-y,y\right)  f\left(  t,x-y\right)  f\left(  t,y\right)
\der y-\int_{0}^{\infty}K\left(  x,y\right)  f\left(  t,x\right)  f\left(
t,y\right)  \der y\\
& =\left[ \int_{0}^{L}K\left(  x-y,y\right)  f\left(  t,x-y\right)  f\left(
t,y\right)  \der y-\int_{0}^{L}K\left(  x,y\right)  f\left(  t,x\right)  f\left(
t,y\right)  \der y\right]  \\
& +\left[  \frac{1}{2}\int_{L}^{x-L}K\left(  x-y,y\right)  f\left(
t,x-y\right)  f\left(  t,y\right)  \der y-\int_{L}^{\infty}K\left(  x,y\right)
f\left(  t,x\right)  f\left(  t,y\right)  \der y\right].
\end{align*}

We can rewrite this formula as%
\begin{equation}
\mathbb{K}\left[  f\right]  \left(  t,x\right)  =\left[  \int_{0}^{L}\left[
K\left(  x-y,y\right)  f\left(  t,x-y\right)  -K\left(  x,y\right)  f\left(
t,x\right)  \right]  f\left(  t,y\right)  \der y\right]  +\mathbb{K}\left[
f\chi_{\left[  L,\infty\right)  }\right]  \left(  t,x\right),  \label{A3}%
\end{equation}
where $\chi_{\left[  L,\infty\right)  }$ denotes the characteristic function
of the interval $\left[  L,\infty\right)  .$ We now approximate $\mathbb{K}%
\left[  f\right]  \left(  t,x\right)  $ for large values of $x$ and
$t\rightarrow\infty$ and more precisely for $x\gg L.$ To this end we use
(\ref{A2}) and we assume also that $K$ and $f$ are sufficiently regular for
large values of $x.$ Then, using the fact that $y\ll x$ we obtain the
following approximation%
\begin{equation}
K\left(  x-y,y\right)  f\left(  t,x-y\right)  -K\left(  x,y\right)  f\left(
t,x\right)  \simeq-y\frac{\partial}{\partial x}\left[  K\left(  x,y\right)
f\left(  t,x\right)  \right].  \label{A4}%
\end{equation}

Moreover, (\ref{B1}) yields an approximation for $K\left(  x,y\right)  $ if
$x\ll y.$ Exchanging the roles of $x$ and $y$ we obtain $K\left(  x,y\right)
\sim x^{\gamma+\lambda}y^{-\lambda}$ as $\frac{x}{y}\rightarrow\infty$ and
plugging this formula into (\ref{A4}) we obtain%
\[
K\left(  x-y,y\right)  f\left(  t,x-y\right)  -K\left(  x,y\right)  f\left(
t,x\right)  \simeq-y^{1-\lambda}\frac{\partial}{\partial x}\left[
x^{\gamma+\lambda}f\left(  t,x\right)  \right].
\]

Using this approximation in (\ref{A3}) we then obtain%
\[
\mathbb{K}\left[  f\right]  \left(  t,x\right)  =-\left[  \int_{0}%
^{L}y^{1-\lambda}f\left(  t,y\right)  \der y\right]  \left[  \frac{\partial
}{\partial x}\left[  x^{\gamma+\lambda}f\left(  t,x\right)  \right]  \right]
+\mathbb{K}\left[  f\chi_{\left[  L,\infty\right)  }\right]  \left(
t,x\right)
\]
for $x\gg1$ and $t\rightarrow\infty.$ We can now use (\ref{A2}) to derive a
formula for $\int_{0}^{L}y^{1-\lambda}f\left(  t,y\right)  \der y.$
We then obtain the approximation%
\begin{equation}
\mathbb{K}\left[  f\right]  \left(  t,x\right)  \simeq -\frac{\int_{0}^{\infty}%
y\eta\left(  y\right)  \der y}{\int_{0}^{\infty}y^{\gamma+\lambda}f\left(
t,y\right)  \der y}\frac{\partial}{\partial x}\left[  x^{\gamma+\lambda}f\left(
t,x\right)  \right]  +\mathbb{K}\left[  f\chi_{\left[  L,\infty\right)
}\right]  \left(  t,x\right).  \label{A5}%
\end{equation}

Notice that we use that $\int_{0}^{L}y\eta\left(  y\right)  \der y\simeq\int
_{0}^{\infty}y\eta\left(  y\right) \der y$ since by assumption $\eta\left(
y\right)  $ decreases sufficiently fast for large values of $y.$

We will denote as $f_{out}$ the distribution of particles in the region where
$x\gg1.$ More precisely we write $f_{out}=f\chi_{\left[  L,\infty\right)  }.$
Combining (\ref{eq:coag with inj}), (\ref{coagulation-kernel-def}) with (\ref{A5}) we obtain
the following evolution equation for $f_{out}$%
\begin{equation}
\partial_{t}f_{out}\left(  t,x\right)  +\frac{\int_{0}^{\infty}y\eta\left(
y\right)  \der y}{\int_{0}^{\infty}y^{\gamma+\lambda}f_{out}\left(  t,y\right)
\der y}\frac{\partial}{\partial x}\left[  x^{\gamma+\lambda}f_{out}\left(
t,x\right)  \right]  =\mathbb{K}\left[  f_{out}\right]  \left(  t,x\right).
\label{A6}%
\end{equation}

Notice that we use the approximation $\int_{0}^{\infty}y^{\gamma+\lambda
}f\left(  t,y\right)  \der y\simeq\int_{0}^{\infty}y^{\gamma+\lambda}%
f_{out}\left(  t,y\right)  \der y$ that might be expected because $f\left(
t,y\right)  \rightarrow0$ for $y\leq L$ (cf. (\ref{A2})).

In the rest of the paper we will study the properties of the self-similar
solutions associated to the equation (\ref{A6}). It is worth to remark that
the transport term on the left of (\ref{A6}) is the way in which the injection of
particles with size $x$ of order one affects the outer distribution of
clusters $f_{out}$. Indeed, multiplying (\ref{A6}) by $x$ and integrating we
obtain%
\begin{align} 
& \partial_{t}\left(  \int_{0}^{\infty}xf_{out}\left(  t,x\right)  \der x\right)
+\frac{\int_{0}^{\infty}y\eta\left(  y\right)  \der y}{\int_{0}^{\infty}%
y^{\gamma+\lambda}f_{out}\left(  t,y\right)  \der y}\int_{0}^{\infty}%
x\frac{\partial}{\partial x}\left[  x^{\gamma+\lambda}f_{out}\left(
t,x\right)  \right]  \der x \nonumber \\
&=\int_{0}^{\infty}x\mathbb{K}\left[  f_{out}\right]
\left(  t,x\right)  \der x.\label{A7}%
\end{align}

The mass conservation property associated to the coagulation kernel yields
\[ \int_{0}^{\infty}x\mathbb{K}\left[  f_{out}\right]  \left(  t,x\right)
\der x=0.
\]
On the other hand, integrating by parts in the second term on the left
of (\ref{A7}) we obtain $\int_{0}^{\infty}x\frac{\partial}{\partial x}\left[
x^{\gamma+\lambda}f_{out}\left(  t,x\right)  \right]  \der x=-\int_{0}^{\infty
}x^{\gamma+\lambda}f_{out}\left(  t,x\right)  \der x.$ Combining these results we
obtain%
\begin{equation}
\partial_{t}\left(  \int_{0}^{\infty}xf_{out}\left(  t,x\right)  \der x\right)
=\int_{0}^{\infty}x\eta\left(  x\right)  \der x.\label{A8}%
\end{equation}

The identity (\ref{A8}) states that the total mass of the clusters in the
outer region is equal to the injection rate. This result, that holds for long
times, could be expected because in the regime described in this section, the
injected particles are transferred instantaneously to large cluster sizes. This is also consistent with $f(t,x)\rightarrow 0$ as $t\rightarrow\infty$ when $x\approx 1$.

\subsection{The case $\gamma+2\lambda=1$} \label{sec:asympt =1}

In the case $\gamma+2\lambda=1$, the approximation of the concentrations of clusters $f(t, \cdot)$ in
the region where $x$ is of order one must be obtained in a different manner.
The
reason is that in this case we cannot expect the moment $M_{\gamma+\lambda}=\int_0^\infty x^{\gamma +\lambda} f(t,x) \der x $
to converge to infinity as $t\rightarrow\infty.$ Indeed, suppose that most
of the mass of the monomers is distributed in a characteristic length $L(t)$
that increases as $t\rightarrow\infty.$ Then, if we denote as $M_{0}$ and
$M_{1}$ the moments of $f$ of order zero and one, respectively, (i.e. $\int
_{0}^{\infty}f\left(  t,x\right) \der x$ and $\int_{0}^{\infty}xf\left(
t,x\right)  \der x $, respectively), we have that $M_{1}=M_{0} L(t)$ and $M_{\gamma+\lambda}%
=M_{0}L(t)^{\gamma+\lambda}.$ 

Assuming that $\int_{0}^{\infty}%
x\eta\left(  x\right) \der x=1$ and hence that $M_{1}\simeq t$ as $t\rightarrow\infty$, we deduce that $M_{0}L(t)=t$. 
The rescaling properties of
(\ref{eq:coag with inj}), (\ref{coagulation-kernel-def}) suggest that $\frac{M_{0}}{t}%
\simeq\left(  M_{0}\right)^{2}L(t)^{\gamma}.$ Therefore, plugging the
identity $M_{0}=\frac{t}{L(t)}$ in this formula we obtain $L(t)=
t^{\frac{2}{1-\gamma}}.$ Hence
\begin{equation} \label{scaling argument}
M_{\gamma+\lambda}=M_{0}L(t)^{\gamma+\lambda}=tL(t)^{\gamma+\lambda-1}=  t^{1+\frac
{2}{1-\gamma}\left(  \gamma+\lambda-1\right)  }=1. 
\end{equation}

It then follows that the self-similar rescaling ansatz implies that
$M_{\gamma+\lambda}$ remains of order one for large times. A consequence of
this is that we cannot approximate (\ref{eq:coag with inj}), (\ref{coagulation-kernel-def})
for clusters of order one by means of the equation (\ref{A1}). Instead of this
we will use a different approximation by splitting $f$ in an outer part which
describes the cluster distribution for $x$ of order $L$ and an inner part
that describes the cluster distribution for $x$ of order one. More precisely,
we write%
\begin{equation}
f\left(t,x\right)  =f_{inner}\left( t, x \right)  +f_{outer}\left(
t, x\right),  \label{B2}%
\end{equation}
where $f_{out} = f \chi_{[L, \infty ) }$, while $f_{inn} =f \chi_{(0, L] }$, for a constant $L>0.$

Using (\ref{eq:coag with inj}), (\ref{coagulation-kernel-def}) we would then obtain the
following evolution equation for $f_{inner}$ 
\begin{equation}
\partial_{t}f_{inner}\left(  t,x\right)  =\mathbb{K}\left[  f_{inner}\right]
\left(  t,x\right)  -f_{inner}\left(  t,x\right)  \int_{0}^{\infty}K\left(
x,y\right)  f_{outer}\left(  t,y\right)  \der y+\eta\left(  x\right),  \label{B3}%
\end{equation}
where we have used the decomposition (\ref{B2}) in the lost term of the
coagulation operator $\mathbb{K}\left[  f\right]  $. We use also the fact that
in order to compute the gain term for $x$ of order one we need to use only
$f_{inner}.$

By assumption, the main contribution of $f_{outer}\left(  t,y\right)  $ is due
to clusters with size $L \gg1.$ On the other hand, for $x$ of order one, we have the
approximation%
\[
K\left(  x,y\right)  f_{outer}\left(  t,y\right)  \simeq y^{\gamma+\lambda
}x^{-\lambda}f_{outer}\left(  t,y\right)
\]
and (\ref{B3}) becomes%
\begin{equation}
\partial_{t}f_{inner}\left(  t,x\right)  =\mathbb{K}\left[  f_{inner}\right]
\left(  t,x\right)  -f_{inner}\left(  t,x\right)  x^{-\lambda}\int_{0}%
^{\infty}y^{\gamma+\lambda}f_{outer}\left(  t,y\right) \der y+\eta\left(
x\right).  \label{B4}%
\end{equation}

The scaling argument above, \eqref{scaling argument}, suggests that $\int_{0}^{\infty}y^{\gamma+\lambda
}f_{outer}\left(  t,y\right) \der y$ approaches to a positive constant as
$t\rightarrow\infty$ if $f_{outer}$ behaves in a self-similar manner and
$\gamma+2\lambda=1.$ We will write%
\begin{equation}
M_{\gamma+\lambda}=\int_{0}^{\infty}y^{\gamma+\lambda}f_{outer}\left(
t,y\right)  \der y. \label{B5}%
\end{equation}

On the other hand, we will assume that the function $f_{inner}$, which is
described by means of \eqref{B4}, approaches to a stationary solution for large
times. We then obtain the following equation which would be expected to
describe the behavior of the solutions of (\ref{B4}) for long times%
\begin{equation}
\mathbb{K}\left[  f_{inner}\right]  \left(  x\right)  -M_{\gamma+\lambda
}f_{inner}\left(  x\right)  x^{-\lambda}+\eta\left(  x\right)  =0.\label{B6}%
\end{equation}

We can now derive an equation describing the
evolution of $f_{outer}.$ To this end we argue as in the case of
$\gamma+2\lambda>1$ in order to approximate the effect in $f_{outer}$ due to
the collisions of clusters with size $L$ with clusters with size of order
one. Using approximations analogous to the ones used in the derivation of
(\ref{A6}) (cf. (\ref{A3}), (\ref{A4})) we obtain the following approximation
for $\mathbb{K}\left[  f\right]  \left(  t,x\right)  $
\begin{equation}
\mathbb{K}\left[  f\right]  \left(  t,x\right)  \simeq-\left[  \int
_{0}^{\infty}y^{1-\lambda}f_{inner}\left(  t,y\right)  \der y\right]  \left[
\frac{\partial}{\partial x}\left[  x^{\gamma+\lambda}f_{outer}\left(
t,x\right)  \right]  \right]  +\mathbb{K}\left[  f_{outer}\right]  \left(
t,x\right)  \label{B7}%
\end{equation}
for $x$ of order $L.$

On the other hand, multiplying (\ref{B6}) by $x$ and integrating in $\left(
0,\infty\right)  $ we obtain, since $f_{inner}$ is zero for $x>L$, that%
\[
M_{\gamma+\lambda}\int_{0}^{\infty}y^{1-\lambda}f_{inner}\left(  y\right)
\der y=\int_{0}^{\infty}y\eta\left(  y\right)  \der y.
\]

Using this formula to eliminate $\int_{0}^{\infty}y^{1-\lambda}f_{inner}%
\left(  y\right)  \der y$ in (\ref{B7}) we obtain%
\[
\mathbb{K}\left[  f\right]  \left(  t,x\right)  \simeq-\frac{\int_{0}^{\infty
}y\eta\left(  y\right) \der y}{M_{\gamma+\lambda}}\left[  \frac{\partial
}{\partial x}\left[  x^{\gamma+\lambda}f_{outer}\left(  t,x\right)  \right]
\right]  +\mathbb{K}\left[  f_{outer}\right]  \left(  t,x\right). 
\]
Therefore, we obtain that $f_{outer}$ satisfies the equation (\ref{A6}).

The whole asymptotic behavior derived here relies on the
existence of solutions of the equation (\ref{B6}).
In this equation, the value
of $M_{\gamma+\lambda}$ is chosen as one associated to a self-similar solution
of (\ref{A6}). The equation (\ref{B6}) can then be interpreted as a stationary
solution for a coagulation equation with source $\eta$ and a removal term
$-M_{\gamma+\lambda}f_{inner}\left(  x\right)  x^{-\lambda}.$ The
results in \cite{ferreira2019stationary} imply that no solutions of (\ref{B6}) exist if
$M_{\gamma+\lambda}=0$ and $\gamma+2\lambda=1$, but when $M_{\gamma+\lambda} >0$ the existence/non-existence of stationary solutions for equation \eqref{B6} is still an open problem.

\section{Setting and main results} \label{sec:main}
The main results of this paper concern the existence and the non-existence of solutions to the following equation 
\begin{equation} \label{eq: ss strong} 
-\frac{3+\gamma}{1-\gamma}\Phi\left(  \xi\right)  -\frac{2}{1-\gamma}\xi
\Phi_{\xi}\left(  \xi\right)  +\frac{1}{\int_{0}^{\infty}\eta^{\gamma+\lambda
}\Phi\left(  \eta\right)  \der \eta}\frac{\partial}{\partial\xi}\left(
\xi^{\gamma+\lambda}\Phi\left(  \xi\right)  \right)  =\mathbb{K}\left[
\Phi\right]  \left(  \xi\right) ,\ \ \xi>0.
\end{equation}
We write now precisely what we mean by a solution of equation \eqref{eq: ss strong} and then we state the main theorems on the  existence of a self-similar profile under certain assumptions on the parameters $\gamma $ and $\lambda $ (Theorem \ref{thm:existence}), on its properties (Theorem \ref{thm:properties}) and on the non-existence of the self-similar solutions under different assumptions on the parameters $\gamma $ and $\lambda $ (Theorem \ref{thm:non-existence}).  

\begin{definition} \label{def:self-similar profile}
Let $K$ be a homogeneous symmetric coagulation kernel satisfying
\eqref{kernel bounds}, \eqref{kernel cont},  with homogeneity $\gamma < 1 $ and with $\gamma + \lambda < 1$ and $\gamma + 2 \lambda \geq 1 $. A self-similar profile of equation \eqref{A6} with respect to the kernel $K$, is a measure $\Phi \in \mathcal M_+(\mathbb R_*) $ such that
\begin{equation}\label{minimal boundary cond for ss} 
0 < \int_{\mathbb R_*}x^{\gamma +\lambda}\Phi(\der x) < \infty \text{ and } \int_{(0,1)}x^{1-\lambda}\Phi(\der x) < \infty 
    \end{equation}
    and such that it satisfies the following equation 
\begin{align}\label{weak steady state eq}
&  \int_{\mathbb R_*}  \varphi'(x) \left[ \frac{2}{1-\gamma}x - \frac{x^{\gamma + \lambda}}{\int_{\mathbb R_*}z^{\gamma +\lambda}\Phi(\der z)}  \right] \Phi(\der x) 
 - \frac{1+\gamma}{1-\gamma} \int_{\mathbb R_*}  \varphi(x) \Phi(\der x)\\
& =\frac{1}{2}\int_{\mathbb R_*} \int_{\mathbb R_*} K(x,y) \left[ \varphi(x+y)- \varphi(x) - \varphi(y) \right] \Phi(\der x) \Phi(\der y) \nonumber 
\end{align} 
for every test function $\varphi \in C^1_c(\mathbb R_*).$
\end{definition} 
\begin{remark}
For every test function $\varphi \in C^1_c(\mathbb R_*)$ (hence such that $\varphi=0$ near zero) all the integrals in equation \eqref{weak steady state eq} are finite. The integrals in the left-hand side of equation \eqref{weak steady state eq} are bounded due to the fact that $\varphi$ is compactly supported and that $\Phi$ is a Radon measure. 
We analyse now the right hand side. 
Since $\gamma + 2 \lambda \geq 0 $
\begin{align*} 
&\int_{\mathbb R_*} \int_{\mathbb R_*} K(x,y) \left| \varphi(x+y)- \varphi(x) - \varphi(y) \right| \Phi(\der x) \Phi(\der  y) \\
 &\leq 4 c_1  \int_{\mathbb R_*} \int_{(0, y]} x^{- \lambda }y^{\gamma + \lambda} \left| \varphi(x+y)- \varphi(x) - \varphi(y) \right| \Phi(\der x) \Phi(\der y) \\ 
 &\leq c  \int_{\mathbb R_*} \int_{(0, y]} x^{- \lambda }y^{\gamma + \lambda} \left(\left| \varphi(x+y) - \varphi(y) \right| + | \varphi(x)| \right) \Phi(\der x) \Phi(\der y)  \\
  &\leq c ||\varphi'||_{\infty}\int_{\mathbb R_*} \int_{(0, 1)} x^{1 - \lambda }y^{\gamma+ \lambda}  \Phi(\der x) \Phi(\der y ) \\
  &+ c  \int_{\mathbb R_*} \int_{[1, y]} x^{ - \lambda }y^{\gamma + \lambda} \left| \varphi(x+y) - \varphi(y) \right| \Phi(\der x) \Phi(\der y ) 
  + c  \int_{\mathbb R_*} y^{\gamma + \lambda}  \Phi(\der y) \int_{\mathbb R_*} x^{- \lambda } |\varphi(x)| \Phi(\der x).
\end{align*}
Using \eqref{minimal boundary cond for ss}, the fact that $- \lambda \leq \gamma + \lambda $, as well as the fact that $\varphi$ is compactly supported the desired conclusion follows. 
\end{remark} 
\begin{theorem}[Existence of the self-similar profiles] \label{thm:existence and properties} 
Let $K$ be a homogeneous symmetric coagulation kernel, of homogeneity $\gamma$, satisfying
\eqref{kernel bounds}, \eqref{kernel cont}, with $\gamma , \lambda $ such that  \eqref{avoid_gelation_parameters} holds and such that
\[
-1 < \gamma, \quad \gamma+2\lambda \geq 1.
\] 
Then there exists a self-similar profile $\Phi$ as in Definition \ref{def:self-similar profile}.
Moreover, $\Phi$ is such that $\Phi((0, \rho(M_{\gamma+\lambda})))=0$ for 
\begin{equation}\label{rhoM} 
 \rho(M_{\gamma+\lambda}) :=  \left( \frac{1-\gamma }{2 \int_{\mathbb R_*} x^{\gamma + \lambda } \Phi(\der x ) }\right)^{\frac{1}{1-\gamma -\lambda }}.
\end{equation}

Additionally, $\Phi$ it is such that
\[
\int_{\mathbb R_*} e^{Lx } \Phi(\der x) < \infty 
\] 
for some $L>0$ and it is absolutely continuous with respect to the Lebesgue measure. 
Then $\Phi(\der x )=\phi(x) \der x $ and the density $\phi$ is such that 
\[
\limsup_{x \in \mathbb R_*} \phi(x) e^{M x} < \infty
\]
for a positive constant $M$. 
 \end{theorem}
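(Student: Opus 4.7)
The plan is to construct $\Phi$ via a fixed-point argument that exploits the scaling invariance of \eqref{weak steady state eq}. First I would note that if $\Phi$ solves \eqref{weak steady state eq} then so does the rescaled measure $\Phi_\sigma(A) := \sigma \Phi(\sigma A)$ up to a change in the value of $M_{\gamma+\lambda}$; this lets me normalize $M_{\gamma+\lambda} = 1$, so that the transport coefficient becomes the explicit function $a(x) = \frac{2x}{1-\gamma} - x^{\gamma+\lambda}$. I would then introduce the rescaled evolution problem $\partial_\tau g + \partial_x(a(x) g) - \frac{1+\gamma}{1-\gamma} g = \mathbb{K}[g]$, truncate the kernel to $K_R$ on a compact interval $[R^{-1}, R]$ with a small regularizing injection at some fixed point $x_0 > \rho$, and find a stationary solution of the truncated problem via a Schauder fixed-point theorem along the lines of \cite{ferreira2021self}, \cite{ferreira2019stationary}. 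Uniform estimates on the $(\gamma+\lambda)$-moment (to preserve the normalization) and on the tail would allow passing to the limit $R \to \infty$ and recovering a profile with the correct normalization; undoing the scaling then yields $\Phi$.

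\textbf{Support property.} To prove $\Phi((0, \rho(M_{\gamma+\lambda}))) = 0$, I would plug test functions $\varphi \in C^1_c((0,\rho(M_{\gamma+\lambda})))$ into \eqref{weak steady state eq}. The transport coefficient $a(x) = \frac{2x}{1-\gamma} - \frac{x^{\gamma+\lambda}}{M_{\gamma+\lambda}}$ is strictly negative on $(0, \rho(M_{\gamma+\lambda}))$, vanishes at $\rho(M_{\gamma+\lambda})$, and has positive derivative there. By choosing $\varphi$ of the form $\varphi(x) = \chi(x)/|a(x)|$ with $\chi \geq 0$, the left-hand side becomes manifestly nonpositive after absorbing the $\frac{1+\gamma}{1-\gamma}$ term against the transport term, whereas the right-hand side, which involves $\mathbb{K}[\Phi]$ restricted to the same region, can be bounded by a small multiple of $\Phi((0,\rho))$ itself. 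A contradiction argument then forces $\Phi$ to vanish on $(0,\rho(M_{\gamma+\lambda}))$.

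\textbf{Exponential decay and absolute continuity.} For exponential decay, I would test the weak equation against regularized versions of $\varphi_L(x) = e^{Lx}$, obtaining a differential inequality for $E(L) := \int_{\mathbb{R}_*} e^{Lx}\Phi(\der x)$. The linear loss portion of $\mathbb{K}[\Phi]$ produces a term of size $-cL\,E(L)$ (using the polynomial lower bound on $K$ together with the nondegeneracy of $M_{\gamma+\lambda}$), which dominates the positive contributions coming from the transport and bilinear gain terms when $L$ is small enough. This forces $E(L) < \infty$. For absolute continuity and the pointwise exponential bound on the density, I would note that on $(\rho(M_{\gamma+\lambda}), \infty)$ equation \eqref{eq: ss strong} can be read, in the distributional sense, as a first-order transport equation $\partial_x\bigl(a(x)\Phi\bigr) = -\tfrac{1+\gamma}{1-\gamma}\Phi + \mathbb{K}[\Phi]$, whose right-hand side is a locally finite measure whose restriction to intervals bounded away from $\rho$ has a continuous density, since $\mathbb{K}$ is a convolution-type operator with continuous kernel acting on a measure with finite moments. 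A bootstrap argument then yields continuity of $\phi$ on $(\rho, \infty)$, and the exponential pointwise bound on $\phi$ follows by combining the explicit ODE representation $\phi(x) = \frac{1}{a(x)}\bigl[\text{initial data} + \int_\rho^x (\ldots)\,\der y\bigr]$ with the moment bound $E(L) < \infty$.

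I expect the main obstacle to be the fixed-point construction itself, specifically ensuring that the limit of the truncated problems is nontrivial: that the $(\gamma+\lambda)$-moment of the candidate measure remains bounded and bounded away from zero as $R \to \infty$. Because the transport coefficient $a(x)$ changes sign at $\rho(M_{\gamma+\lambda})$, and $\rho$ itself depends on the limiting moment, there is a delicate self-consistency issue: the support of the profile and the value of $M_{\gamma+\lambda}$ are coupled. Decoupling this via the scaling normalization and simultaneously controlling mass escape both near $\rho$ and near infinity is the heart of the argument, and is what distinguishes the analysis here from the non-transport setting treated in \cite{ferreira2021self}.
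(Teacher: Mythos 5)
Your outline shares the paper's broad skeleton (truncation, fixed point, support property, exponential moments, flux form), but it leaves unresolved precisely the steps that constitute the actual proof, and two of the mechanisms you propose would fail as stated. The most serious gap is the one you yourself flag: you never explain how the $(\gamma+\lambda)$-moment stays bounded above \emph{and} away from zero uniformly in $R$, nor where the hypothesis $\gamma>-1$ enters --- yet it must enter somewhere, since by Theorem \ref{thm:non-existence} the statement is false for $\gamma\le -1$. The paper resolves this not by normalizing $M_{\gamma+\lambda}=1$ by scaling but by normalizing the mass $M_1=1$ (which is conserved by \eqref{time dependent eq}) and exhibiting an invariant region (Proposition \ref{prop:inv region}): an ODE for $M_{\gamma+\lambda}$ exploiting $\gamma+2\lambda\ge 1$ gives the upper bound $C_1$; positivity of the characteristic velocity $\frac{x^{\gamma+\lambda}}{M_{\gamma+\lambda}}-\frac{2x}{1-\gamma}$ on $(0,\rho(C_1))$ propagates the support condition; the bound on $M_{2-\gamma-\lambda}$ uses the sign of $-\frac{(1+\gamma)(1-\gamma-\lambda)}{1-\gamma}$, i.e.\ exactly $\gamma>-1$; and Cauchy--Schwarz against $M_1=1$ yields the lower bound on $M_{\gamma+\lambda}$. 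Your proposal contains no substitute for this chain, and the auxiliary ``injection at $x_0$'' modifies the equation without any indication of how it is removed in the limit $R\to\infty$.

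Two further steps would not go through as written. For the support property, testing \eqref{weak steady state eq} with $\varphi=\chi/|a|$ is problematic: such $\varphi$ blow up like $x^{-(\gamma+\lambda)}$ near the origin and are not admissible with only \eqref{minimal boundary cond for ss} available, and, more fundamentally, a local sign/Gr\"onwall argument on $(0,\rho)$ cannot rule out a nonzero incoming particle flux at $x=0^+$; showing that this boundary flux $L$ vanishes is the core of Proposition \ref{prop:support} (the flux representation $W_\phi$, the moment estimates on $I_1,I_2$, and the scaling argument), and it is exactly where $\gamma+2\lambda>1$ is used --- note that if your argument worked as sketched it would apply verbatim to $\gamma+2\lambda=1$, which the authors explicitly cannot handle for general profiles (for the constructed profile the paper instead builds the support condition into the invariant region and upgrades $\rho(C_1)$ to $\rho(M_{\gamma+\lambda})$ by Gr\"onwall). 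For the exponential moment, the mechanism you invoke is incorrect: with $\varphi=e^{Lx}$ the loss part of $\mathbb K$ contributes $-\iint K(x,y)e^{Lx}\Phi(\der x)\Phi(\der y)$, which carries no factor of $L$ and cannot dominate the bilinear gain $\iint K(x,y)e^{L(x+y)}\Phi(\der x)\Phi(\der y)$, since $e^{L(x+y)}$ is not comparable to $e^{Lx}+e^{Ly}$ for large sizes. The workable route (Lemma \ref{lemma:expo bound}) passes to the mass-flux form \eqref{equality psi}, takes $\psi'=e^{L\min\{x,a\}}$, and controls the coagulation term by a Jensen-type interpolation giving $c\,\Psi_a'\le \Psi_a^{1-\gamma-\lambda}(\Psi_a')^{\gamma+\lambda}+c$; the good term comes from the self-similar dilation, not from the loss term. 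Finally, your regularity argument only yields a density away from $\rho(M_{\gamma+\lambda})$; you must still exclude an atom at $\rho(M_{\gamma+\lambda})$ itself, which the paper does by observing that such an atom would generate an atom at $2\rho(M_{\gamma+\lambda})$, contradicting the absolute continuity already established there.
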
 
 \begin{remark}
In this paper we do not prove the uniqueness of the self-similar profiles. Therefore, it makes sense to understand if the proven properties for the self-similar profile constructed in Theorem \ref{thm:existence} hold for each self-similar profile as in Definition \ref{def:self-similar profile}. 

When $\gamma + 2 \lambda >1 $ we prove that each self-similar profile as in Definition \ref{def:self-similar profile} is zero in the set $(0, \rho(M_{\gamma+\lambda}))$, where $\rho(M_{\gamma+\lambda})$ is given by \eqref{rhoM}, see Theorem \ref{thm:properties} for more details. 
 In contrast, when $\gamma + 2 \lambda =1 $, we only prove that the self-similar profile constructed in the proof of Theorem \ref{thm:existence and properties} is such that $\Phi((0, \rho(M_{\gamma+\lambda})))=0$. 
However, we do not know if this property holds for every self-similar profile as in Definition \ref{def:self-similar profile}. 
 \end{remark}

\begin{theorem}[Non-existence of the self-similar profiles] \label{thm:non-existence} 
Let $K$ be a homogeneous symmetric coagulation kernel, of homogeneity $\gamma$, satisfying
\eqref{kernel bounds}, \eqref{kernel cont}, with $\gamma , \lambda $ such that  \eqref{avoid_gelation_parameters} holds.
\begin{enumerate}
    \item If $ \gamma \leq  -1, \gamma + 2\lambda> 1$, then a self-similar profile $\Phi$ as in Definition \ref{def:self-similar profile} does not exist. 
 \item If $ \gamma \leq  -1, \gamma + 2\lambda=1 $, then a self-similar profile $\Phi$ as in Definition \ref{def:self-similar profile} with the additional property
 \begin{align}\label{-lambda moment}
     \int_{(0,1]} x^{-\lambda} \Phi(\der x ) < \infty 
 \end{align}
 does not exist. 
 \end{enumerate}
\end{theorem}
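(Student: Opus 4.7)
The plan is to derive, via approximation in the weak formulation \eqref{weak steady state eq}, the zeroth-moment identity
\[
\frac{1+\gamma}{1-\gamma}\, M_0(\Phi) \;=\; \frac{1}{2} \iint_{\mathbb R_* \times \mathbb R_*} K(x,y)\,\Phi(\der x)\,\Phi(\der y)
\]
formally obtained by setting $\varphi \equiv 1$ in \eqref{weak steady state eq}. For this constant test function $\varphi'\equiv 0$ annihilates the transport term, $\int \varphi\,\Phi = M_0$ produces the mass term, and $\varphi(x+y)-\varphi(x)-\varphi(y)\equiv -1$ turns the coagulation integral into $-\frac{1}{2}\iint K\,\Phi\Phi$. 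Because $K>0$ on $\mathbb R_*\times\mathbb R_*$ and $\Phi\not\equiv 0$ (since $M_{\gamma+\lambda}>0$ by \eqref{minimal boundary cond for ss}), the right-hand side is strictly positive, while the left-hand side is $\leq 0$ throughout the regime $\gamma\leq -1$: strictly negative if $\gamma<-1$, and zero if $\gamma=-1$, in which case the identity forces $\iint K\Phi\Phi=0$, impossible. Either way, this produces the contradiction ruling out the self-similar profile.

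\textbf{Moment estimates.} From $\gamma\leq -1$ and $\gamma+2\lambda\geq 1$ one deduces $\lambda\geq 1$, so $x^{1-\lambda}\geq 1$ on $(0,1]$ and hence $\Phi((0,1])\leq \int_{(0,1]}x^{1-\lambda}\,\Phi(\der x)<\infty$ from \eqref{minimal boundary cond for ss}. For the tail, I test \eqref{weak steady state eq} against $\varphi_R(x)=x\chi_R(x)$, with $\chi_R\in C^1_c(\mathbb R_*)$ a smooth cutoff equal to $1$ on $[1/R,R]$ and supported in $[1/(2R),2R]$. The coagulation bracket $\varphi_R(x+y)-\varphi_R(x)-\varphi_R(y)$ vanishes on the flat region where all three arguments lie in $[1/R,R]$ and is otherwise uniformly controlled via \eqref{kernel bounds} together with $M_{\gamma+\lambda}<\infty$. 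Dominated convergence as $R\to\infty$ yields the exact mass identity $M_1(\Phi)=1$, hence $\Phi((1,\infty))\leq M_1<\infty$ and $M_0(\Phi)<\infty$.

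\textbf{Passage to the limit.} I then take $\varphi_n\in C^1_c(\mathbb R_*)$ with $\varphi_n\equiv 1$ on $[a_n,b_n]$, $0\leq \varphi_n\leq 1$, supported in $[a_n/2,2b_n]$, with $a_n\to 0$ and $b_n\to\infty$. The mass term converges to $-\frac{1+\gamma}{1-\gamma}M_0$ by dominated convergence. In the critical case $\gamma+2\lambda=1$, the extra hypothesis \eqref{-lambda moment} combined with the symmetric bound $K(x,y)\leq 2c_1\,x^{-\lambda}y^{\gamma+\lambda}$ for $x\leq y$ (valid since $\gamma+2\lambda\geq 0$) gives $\iint K\Phi\Phi\leq 2c_1\int x^{-\lambda}\Phi\cdot M_{\gamma+\lambda}<\infty$, so the coagulation integral converges by dominated convergence to $-\frac{1}{2}\iint K\Phi\Phi$, finite. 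The transport boundary term $\int\varphi_n'(x)\bigl[\frac{2x}{1-\gamma}-\frac{x^{\gamma+\lambda}}{M_{\gamma+\lambda}}\bigr]\Phi$ vanishes: at $b_n$ because $|\varphi_n'(x)\cdot x|$ and $|\varphi_n'(x)\cdot x^{\gamma+\lambda}|$ are uniformly bounded and $\Phi([b_n,2b_n])\to 0$ since $M_0<\infty$; at $a_n$ via the structural inequality $x^{\gamma+\lambda}\leq x^{1-\lambda}$ on $(0,1]$, equivalent to $\gamma+2\lambda\geq 1$, combined with $\Phi([a_n/2,a_n])\leq a_n^{\lambda}\int_{[a_n/2,a_n]}x^{-\lambda}\Phi$, which compresses the boundary contribution to size $O(a_n^{\gamma+2\lambda-1})\cdot o(1)\to 0$.

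\textbf{Main obstacle.} The most delicate step is the vanishing of the transport boundary term near $x=0$, where both $\varphi_n'$ and $x^{\gamma+\lambda}$ are singular and the non-uniform Lipschitz constant of $\varphi_n$ blocks any naive dominated-convergence argument. The key structural fact is that $\gamma+2\lambda\geq 1$ is equivalent to $x^{\gamma+\lambda}\leq x^{1-\lambda}$ on $(0,1]$, which is exactly what permits absorbing the singularity of the transport coefficient into the integrability of $\Phi$ near zero built into \eqref{minimal boundary cond for ss}. In the critical case $\gamma+2\lambda=1$, the extra assumption \eqref{-lambda moment} is essential both to guarantee finiteness of $\iint K\Phi\Phi$ and to close this boundary estimate (the exponent $\gamma+2\lambda-1$ is exactly zero there); in the strict case $\gamma+2\lambda>1$, strictness provides a positive power of $a_n$ that extinguishes the boundary contribution without requiring the extra integrability, after an a priori bootstrap from the weak equation establishing sufficient control on $\int_{(0,1]}x^{-\lambda}\Phi$.
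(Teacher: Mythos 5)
Your overall strategy (test the weak equation \eqref{weak steady state eq} with cutoffs converging to $\varphi\equiv 1$ and read off the sign contradiction from $-\tfrac{1+\gamma}{1-\gamma}M_0\geq 0$ versus $-\tfrac12\iint K\,\Phi\Phi<0$) is exactly the paper's, and your observation that $\gamma\leq-1$, $\gamma+2\lambda\geq1$ force $\lambda\geq1$, so that $\Phi((0,1])<\infty$ follows directly from \eqref{minimal boundary cond for ss}, is a nice shortcut the paper does not use. However, there is a genuine gap precisely at the point where the paper does the real work, namely case 1 ($\gamma+2\lambda>1$). There your argument needs quantitative control of $\Phi$ near the origin: the finiteness (or at least one-sided control) of $\iint K\,\Phi\Phi$ and the vanishing of the near-zero boundary and error terms all reduce to $\int_{(0,1]}x^{-\lambda}\Phi(\der x)<\infty$, which is strictly stronger than what Definition \ref{def:self-similar profile} gives (with $\lambda\geq1$, $x^{-\lambda}$ is far more singular than $x^{1-\lambda}$). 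You defer this to ``an a priori bootstrap from the weak equation'', but no such bootstrap is exhibited, and it is not clear one exists: any admissible test function must increase at the left edge of its support, where the drift coefficient $\tfrac{x^{\gamma+\lambda}}{M_{\gamma+\lambda}}-\tfrac{2}{1-\gamma}x$ is positive, so the transport term has the wrong sign for a cheap moment estimate near zero. Concretely, your coagulation error over the ramp region $[a_n/2,a_n]^2$ is only of size $a_n^{\gamma+2\lambda-2}\,o(1)$ using \eqref{minimal boundary cond for ss}, and $\gamma+2\lambda-2$ can be negative, so it does not vanish without extra input. The paper supplies exactly this input through Proposition \ref{prop:support}: every profile vanishes on an interval $(0,\overline\xi)$, proved via the flux representation, the identification $\lim_{x\to0^+}J_\Phi(x)=0$ by a rescaling argument, and an ODE comparison; this is the step your proposal is missing, not a routine bootstrap.

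A secondary issue is your derivation of $M_1(\Phi)=1$ by ``dominated convergence'': as stated it is circular, since the dominating bound requires precisely the integrability ($M_1<\infty$ and $\iint K\min(x,y)\,\Phi\Phi<\infty$) you are trying to establish, and note that $M_1=1$ is not part of Definition \ref{def:self-similar profile}. This piece is fixable — e.g.\ test with a smoothed concave truncation of $\min(x,R)$, whose coagulation bracket is nonpositive in the bulk, to get $M_1\leq\tfrac{1-\gamma}{2}$, or follow the paper's route (exponential decay via Propositions \ref{prop:moment bounds}--\ref{prop:expo decay} in case 1, and the upward moment bootstrap from $M_{\gamma+\lambda}$ to $M_0$ using \eqref{-lambda moment} in case 2) — but it must be argued, since only finiteness of the tail mass, not the exact value $1$, is actually needed. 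Your treatment of the critical case $\gamma+2\lambda=1$ is otherwise sound and matches the paper's use of \eqref{-lambda moment}.
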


\begin{remark}\label{rem:non existence gamma+2lambda=1}
Notice that if $\gamma + 2 \lambda > 1$ we prove that self-similar solutions as in Definition \ref{def:self-similar profile} do not exist when $\gamma \leq -1 $. Instead, when $\gamma + 2 \lambda =1 $ and $\gamma \leq -1$, we do not exclude the existence of a self-similar solution $\Phi$ as in Definition \ref{def:self-similar profile} with 
\[
  \int_{(0,1]} x^{-\lambda} \Phi(\der x ) = \infty.
\] 
\end{remark}

\section{Main ideas of the proofs}\label{sec:ideas}
In this section we explain the main ideas for the proofs of existence/non-existence of self-similar solutions. 
Both in the case $\gamma + 2 \lambda >1$ and $\gamma + 2 \lambda =1 $, to prove that a self-similar solution exists, we find an invariant region for the evolution equation corresponding to \eqref{eq: ss strong}, namely the following equation
\begin{align} \label{time dependent eq}
\partial_t \Phi(t, \xi) - \frac{3+\gamma}{1-\gamma}\Phi(t, \xi) - \frac{2\xi}{1-\gamma} \partial_\xi \Phi(t, \xi) + \frac{\partial_\xi \left( \xi^{\gamma+\lambda} \Phi(t, \xi)\right)  }{\int_{\mathbb R_*} x^{\gamma+\lambda} \Phi(t, \der x ) } =\mathbb K [\Phi](t, \xi) .
\end{align} 
By Tychonoff fixed point theorem the existence of an invariant region implies that there exists a solution of equation \eqref{eq: ss strong}.

We prove that the set
\begin{equation*}
P=\left\{ H : \quad \begin{aligned}& \int_0^\infty x H(x) \der x =1,\ \frac{1}{C_{2}} \leq  \int_0^\infty x^{\gamma + \lambda} H (x) \der x \leq C_1, \\
&H((0, \rho(C_1) ))=0, \ \int_0^\infty x^{2-\gamma - \lambda} H (x) \der x \leq C_{2}
\end{aligned} 
\right\} 
\end{equation*} 
is invariant when $\gamma + 2 \lambda \geq 1 $ and $\gamma > -1$ for suitable constants  $C_1, C_2>0 $ and $\rho(C_1)$ given by 
\begin{equation}\label{delta1formula}
\rho(C_1):= \left(\frac{1-\gamma }{2 C_1} \right)^{\frac{1}{1-\gamma -\lambda}}.
\end{equation} 
To prove that the set $P$ is invariant we proceed as follows.
\begin{enumerate}
    \item We prove that $\int_0^\infty \Phi_0(x) \der x =1 $ implies  $\int_0^\infty \Phi(t,x) \der x =1 $, for every $t >0$. This is done multiplying by $x$ equation \eqref{time dependent eq}, integrating from zero to infinity and then studying the ODE for the zeroth moment obtained in this manner. 
    \item As a second step we prove that there exists an upper bound for $M_{\gamma + \lambda}$. 
 To this end we multiply both sides of equation \eqref{eq: ss strong} by $x^{\gamma + \lambda}$ and we integrate over $x$ in $(0, \infty)$ to obtain an ODE for the $\gamma + \lambda $ moment.
Using Gr\"onwall's lemma, the fact that $\gamma + 2 \lambda  \geq 1 $ and that $\gamma > -1$, the desired conclusion follows. 
\item As a third step we use the fact that $M_{\gamma + \lambda} \leq C_1$ to prove that $\Phi(t, (0, \rho(C_{1}))=0.$ 
Indeed, the evolution described by equation \eqref{time dependent eq} is driven by two mechanisms: coagulation, which increases the average size of the particles in the system, and the growth term,
\[
 \frac{\xi^{\gamma + \lambda }}{\int_0^\infty x^{\gamma + \lambda } \Phi(t,x ) \der x } - \frac{2}{1-\gamma} \xi,  
\] 
which is positive for every $\xi < \rho(C_1) $. 
Hence, if we start from an initial data $\Phi_0$ such that $\Phi_0((0,   \rho(C_1) ))=0$, then we will have that $\Phi(t,(0,  \rho(C_1) ))=0$, for every $t >0.$
\item Using the fact that $\gamma + \lambda <1$, $\gamma > -1$, $\Phi(t,(0,  \rho(C_1)))=0$, as well as the fact that $\int_0^\infty x^{\gamma+\lambda} \Phi(t, \der x) \leq C_1  $, we prove that $ \int_0^\infty x^{2-\gamma-\lambda} \Phi(t,x) \der x \leq C_{2}$. 
\item Finally, from the upper bound for $M_{2-\gamma -\lambda}$, we derive a lower bound for $M_{\gamma + \lambda}$. Indeed, Cauchy-Schwarz inequality implies that
\begin{align}\label{CS}
  1= \left(   \int_{\mathbb R_*} x \Phi(t, x) \der x  \right)^2\leq \int_{\mathbb R_*} x^{\gamma+\lambda} \Phi(t,  x) \der x \int_{\mathbb R_*} x^{2-\gamma - \lambda } \Phi(t, x) \der x .
\end{align}
Hence $\frac{1}{C_{2}} \leq M_{\gamma + \lambda}. $
\end{enumerate}

In order to prove non-existence we proceed by contradiction. 
Due to the contribution of the coagulation operator in equation \eqref{coagTransp}, we expect the zeroth moment of $f$ to decay in time. 
However, assuming the self-similar change of variable \eqref{SelfSimForm}, we have that 
\[
\int_0^\infty f(t, x) \der x = t^{- \frac{1+\gamma}{1-\gamma} } \int_0^\infty \Phi(x) \der x. 
\] 
If $\gamma \leq  -1 $, then $\int_0^\infty f(t, x) \der x $ is constant or increasing in time and this gives a contradiction. Hence we cannot expect self-similar solutions to exist. 
To make the argument rigorous we will have to prove that $0< \int_0^\infty \Phi(\der x ) < \infty $. 
When $\gamma + 2 \lambda > 1 $ we do this by proving that for each self-similar profile there exists a $\delta >0$ such that $\Phi((0, \delta))=0$ and that $ \Phi$ tends to zero sufficiently fast as $x \rightarrow \infty $. 
Instead, when $\gamma + 2 \lambda =1 $, the methods used in this paper do not allow to prove that $\Phi$ is equal to zero near the origin, hence we use \eqref{-lambda moment} as well as \eqref{minimal boundary cond for ss} to prove that $0< \int_0^\infty \Phi(\der x ) < \infty $.

\section{Existence of a self-similar profile}\label{sec:exifstence}
We aim to prove the existence of a solution of equation \eqref{time dependent eq}. 
Namely, we will prove the following theorem, which is just a reformulation of Theorem \ref{thm:existence and properties}.
\begin{theorem} \label{thm:existence}
Let $K$ be a homogeneous symmetric coagulation kernel, of homogeneity $\gamma$, satisfying
\eqref{kernel bounds}, \eqref{kernel cont}, with $\gamma , \lambda $ such that  \eqref{avoid_gelation_parameters} holds and such that
\[
-1 < \gamma, \quad \gamma+2\lambda \geq 1.
\] 
Then there exists a self-similar profile $\Phi$ as in Definition \ref{def:self-similar profile}.
Moreover, $\Phi$ is such that $\Phi((0, \rho(M_{\gamma+\lambda})))=0$ with $\rho(M_{\gamma+\lambda})$ given by \eqref{rhoM}. 
\end{theorem}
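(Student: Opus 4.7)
The plan is to realize $\Phi$ as a fixed point of the time-dependent dynamics \eqref{time dependent eq}, following exactly the blueprint laid out in Section \ref{sec:ideas}. One first sets up a well-posedness theory for \eqref{time dependent eq} on an appropriate subset of $\mathcal{M}_+(\mathbb{R}_*)$, then shows that the explicit convex set $P$ introduced in Section \ref{sec:ideas} (with constants $C_1, C_2$ to be tuned) is invariant under the resulting semiflow $S_t$. Tychonoff's fixed point theorem, applied either to a time-$T$ map $S_T: P \to P$ (after compactification in the weak-$*$ topology of finite measures restricted to the compact support constraint) or to a time-averaged map $T^{-1}\int_0^T S_t\,dt$, then produces a stationary solution, which is precisely a self-similar profile in the sense of Definition \ref{def:self-similar profile}.

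For well-posedness of \eqref{time dependent eq} with initial data $\Phi_0 \in P$, I would use a mild/Duhamel formulation: rewrite the equation as a fixed point along the characteristics of the non-local transport velocity
\[
v(t,\xi) := \frac{\xi^{\gamma+\lambda}}{M_{\gamma+\lambda}(\Phi(t))} - \frac{2\xi}{1-\gamma},
\]
which is well-defined because the invariant-set requirement $M_{\gamma+\lambda}(\Phi(t)) \geq 1/C_2$ keeps the denominator away from zero. Local existence and continuous dependence on $\Phi_0$ in the weak-$*$ topology then follow by a contraction argument in $W_1$ on a compact interval (the relevant bounds are available thanks to the support restriction and the moment controls built into $P$), so $S_t$ is well-defined and weakly continuous from $P$ to itself provided we verify invariance.

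The invariance of $P$ is then verified in the five steps listed in Section \ref{sec:ideas}. Steps 1 (conservation of $M_1=1$) and 5 (Cauchy--Schwarz lower bound on $M_{\gamma+\lambda}$) are straightforward provided one can test against $\varphi(\xi)=\xi$ and use the mass-conservation property $\int x \mathbb{K}[\Phi] dx=0$. Step 2 (upper bound on $M_{\gamma+\lambda}$) requires deriving the ODE for $M_{\gamma+\lambda}(t)$; here the crucial input is that $\gamma+2\lambda \geq 1$ together with $\gamma>-1$ makes the coagulation gain term for this moment subcritical, allowing a Grönwall argument with a uniform-in-time bound $C_1$. Step 3 (support exclusion) uses that on $(0,\rho(C_1))$ the velocity $v$ points outward (positive), so no characteristic enters $(0,\rho(C_1))$, while the coagulation gain term $\tfrac{1}{2}\int_0^x K(x-y,y)\Phi(x-y)\Phi(y)\,dy$ vanishes on $(0,\rho(C_1))$ by the support assumption on $\Phi_0$. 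Step 4 (upper bound on $M_{2-\gamma-\lambda}$) is the most delicate, as this moment exceeds the mass moment; one derives its evolution equation, uses the fact that support is confined to $[\rho(C_1),\infty)$ to bound coagulation gains, and chooses $C_2$ large relative to $C_1$ to close the estimate.

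The main obstacle I anticipate is actually Step 4 together with ensuring that the chosen set $P$ is also \emph{tight} (compactness in the weak-$*$ topology), since the a priori bounds guarantee support bounded below but not bounded above. To handle this I would either restrict to an auxiliary set imposing an additional exponential moment (which Theorem \ref{thm:existence and properties} claims the fixed point satisfies, so it is not unreasonable to impose it on $P$ as well, with constants propagated through the evolution) or work directly with truncated problems and pass to the limit. Once the fixed point $\Phi$ is constructed, the improved support statement $\Phi((0,\rho(M_{\gamma+\lambda}(\Phi))))=0$ follows from the fact that $\Phi$ is stationary under its own characteristic velocity: the velocity $v$ built with $M_{\gamma+\lambda}=M_{\gamma+\lambda}(\Phi)$ is strictly positive on $(0,\rho(M_{\gamma+\lambda}))$ (since $\gamma+\lambda<1$ makes $\xi^{\gamma+\lambda-1}$ decreasing), so any mass there would be pushed outward, contradicting stationarity; the coagulation gain on this interval vanishes once we show $\Phi=0$ on a smaller neighborhood of $0$, which is propagated from the invariant-set bound $\Phi((0,\rho(C_1)))=0$ and then bootstrapped using the sign of $v$.
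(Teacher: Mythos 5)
Your proposal follows essentially the same route as the paper: the paper implements exactly this blueprint, solving the time-dependent equation along the characteristics of the non-local velocity via a contraction in the Wasserstein metric, proving invariance of the set $P$ in the five steps of Section \ref{sec:ideas}, applying Tychonoff's theorem through the standard semigroup fixed-point result of \cite{escobedo2005self}, and deducing the support property $\Phi((0,\rho(M_{\gamma+\lambda})))=0$ by a Gr\"onwall/characteristics argument. The one point to flag is that the paper commits to your second option for compactness --- truncating the kernel to $K_R$ with support confined to $[\rho(C_1),4R]$ and then passing to the limit $R\to\infty$ --- and this truncation is needed for more than tightness: it is what makes the contraction estimates work for the otherwise unbounded kernel and what justifies testing with the unbounded functions $x^{\gamma+\lambda}$ and $x^{2-\gamma-\lambda}$ in Steps 2 and 4, so the alternative of augmenting $P$ with an exponential moment would not straightforwardly close.
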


To this end we prove the existence of a (time-dependent) solution for a suitably  truncated and regularized version of equation \eqref{time dependent eq}:
\begin{align}
\label{truncated time dependent eq}
&\partial_{t} \Phi \left( t, \xi \right)- \frac{3+\gamma}{1-\gamma}\Phi(t, \xi) - \frac{2\xi}{1-\gamma} \partial_\xi \Phi(t, \xi) + \frac{\partial_\xi \left( \xi^{\gamma+\lambda} \Phi(t, \xi)\right)  }{\int_{\mathbb R_*} x^{\gamma+\lambda} \Phi(t, \der x ) } =\mathbb K_R [\Phi](t, \xi).
\end{align}
The operator $\mathbb K_R$ is the truncated operator of parameter $R>0$ defined as 
\begin{align} \label{truncated operator}
\mathbb K_R[\Phi](t,x):= &\frac{1}{2} \int_{0}^{x} K_R\left(  x-y,y\right)
\Phi\left( t, x-y\right)  \Phi\left( t, y\right)  \der y\nonumber\\
&-\int_{0}^{\infty}K_R\left(
x,y\right)  \Phi\left(t,  x\right)  \Phi \left(t,  y\right)  \der y, \end{align}
 where the kernel $K_R$ is a \textit{truncated kernel}, i.e. it is a continuously differentiable, bounded and symmetric function $K_R: \mathbb R_*^2 \rightarrow \mathbb R_+$, such that for every $(x,y ) \in \mathbb R_*^2 $ we have that $K_R(x,y) \leq K(x,y)$ and
 \begin{equation}\label{truncated kernel} 
 \begin{cases}
 K_R(x,y)=0, &\text{ if } x>R \text{ or } y > R; \\
  | K_R(x,y) - K(x,y) |\leq e^{-R}, &\text{ if } (x,y) \in \left[ \frac{1}{2 R}, \frac{R}{4}\right]^2.
 \end{cases} 
 \end{equation} 
Notice that the kernel $K_R$ can be obtained starting from the kernel $K$ by means of standard truncations and mollifying arguments.

Using Tychonoff fixed point theorem we prove the existence of a stationary solution $\Phi_{R} $ for equation \eqref{truncated time dependent eq} and we will prove that there exists the limit $\Phi$ of $\{ \Phi_{R} \}_{R}$ as $R$  tends to infinity. To conclude  we will prove that the measure $\Phi $ satisfies equation \eqref{eq:ss}. 

\subsection{Existence of a time dependent solution for the truncated equation} 
Since in this section we work only with the truncated equation, we omit the label $R$ in $\Phi_{R}$. We will reintroduce the label $R$ in Section \ref{sec:existence self-similar profile}. 
We start this section by introducing a definition of solutions for equation \eqref{truncated time dependent eq}. 
\begin{definition}\label{def:truncated time dep} 
Let $K_R$ be the truncated kernel defined as in \eqref{truncated kernel} as a function of the homogeneous symmetric coagulation kernel $K$ satisfying
\eqref{kernel bounds}, \eqref{kernel cont},  with homogeneity $\gamma < 1 $ and with $\gamma + \lambda < 1$ and $\gamma + 2 \lambda \geq 1 $. A function $\Phi \in C^1([0,T]; \mathcal M_{+,b}(\mathbb R_*))$ is a solution of equation \eqref{truncated time dependent eq} if 
\[
0 < \inf_{  t \in [0, T]  }\int_{\mathbb R_*} x^{\gamma+\lambda } \Phi(t, \der x ) \ \text{ and } \ \sup_{  t \in [0, T]  }\int_{\mathbb R_*} x^{\gamma+\lambda } \Phi(t, \der x ) < \infty 
\] 
and if $\Phi$ satisfies 
\begin{align}\label{weak truncated time dependent eq}
& \int_{\mathbb R_*} \varphi(x) \dot{\Phi}(t, \der x)- \frac{1+\gamma}{1-\gamma} \int_{\mathbb R_*}  \varphi(x) \Phi(t, \der x) +
\frac{2}{1-\gamma} \int_{\mathbb R_*}  \varphi'(x) x \Phi(t, \der x) \\
& - \frac{1}{\int_{\mathbb R_*}x^{\gamma +\lambda}\Phi(t,\der x)}  \int_{\mathbb R_*}  \varphi'(x) x^{\gamma + \lambda}\Phi(t, \der x) \nonumber \\
& =\frac{1}{2}\int_{\mathbb R_*} \int_{\mathbb R_*} K_R(x,y) \left[ \varphi(x+y)- \varphi(x) - \varphi(y) \right] \Phi(t, \der x) \Phi(t, \der y), \nonumber 
\end{align} 
for every test function $\varphi \in C^1_c(\mathbb R_*).$
\end{definition}
Given the positive numbers $k_1, k_2, \rho^*$, we define the subset $S(k_1, k_2, \rho^*) $ of $\mathcal M_+(\mathbb R_*) $ as 
\begin{equation} \label{invariant set} 
  S(k_1, k_2, \rho^*) :=\left\{ H  \in \mathcal M_+(\mathbb R_*): \quad \begin{aligned}& H((0, \rho^*))=0, \int_{\mathbb R_*} xH(\der x ) =1, \\
  &k_1 \leq \int_{\mathbb R_*} x^{\gamma+\lambda} H(\der x ) \leq k_2 
  \end{aligned} \right\}.
\end{equation}
\begin{theorem}\label{thm:time dep truncated}
Assume $K_{R}$ to be a truncated kernel defined as in \eqref{truncated kernel} as a function of a homogeneous symmetric kernel $K$ satisfying \eqref{kernel bounds},  \eqref{kernel cont},  with parameters $\gamma, \lambda \in \mathbb R$ such that \eqref{avoid_gelation_parameters} holds and $\gamma + 2\lambda \geq 1 $ and $\gamma > -1.$ 
Then there exist two constants $k_1, k_2>0$ and a constant $R_0>0$ such that if $\Phi_{0} \in S\left(2 k_1 , \frac{k_2}{2} , \rho(k_2)\right) \cap \left\{ H \in \mathcal M_+(\mathbb R_*): H((4R, \infty) )=0 \right\} $ for $R >R_0$ and for $\rho(k_2)$ given by \eqref{delta1formula},
then there exists a unique solution $\Phi \in C^{1}([0,T];\mathcal M_{+}(\mathbb R_*))$ of equation \eqref{truncated time dependent eq} in the sense of Definition \ref{def:truncated time dep}, for $T>0$ sufficiently small.  
For every $ t \in [0, T]$, we have that $\Phi(t, \cdot ) \in S\left(k_1, k_2, \rho( k_2 )\right) \cap \left\{ H \in \mathcal M_+(\mathbb R_*): H((4R, \infty) )=0 \right\}.$ 
\end{theorem}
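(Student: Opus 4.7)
The plan is to set up a Banach fixed-point argument on $C([0,T]; \mathcal{M}_+(\mathbb{R}_*))$ equipped with the supremum-in-time total variation metric. The key observation is that, beyond the standard Smoluchowski term, the only essential nonlinearity is the scalar $\tilde M(t) := M_{\gamma+\lambda}[\Phi](t)$ appearing in the denominator of the transport term; on the invariant set described below it will be bounded above and away from zero, so the transport velocity
\[
V_{\tilde M}(\xi) = \frac{\xi^{\gamma+\lambda}}{\tilde M} - \frac{2\xi}{1-\gamma}
\]
is Lipschitz on compact subsets of $\mathbb{R}_*$, and the associated characteristic flow is well-defined. Since $K_R$ is bounded with support in $[0,R]^2$, the operator $\mathbb{K}_R$ is Lipschitz on bounded sets of finite measures.

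First I would define the closed convex set
\[
\mathcal{Y}_T := \{\Phi \in C([0,T]; \mathcal{M}_+(\mathbb{R}_*)) : \Phi(0) = \Phi_0,\ \Phi(t, \cdot) \in S(k_1, k_2, \rho(k_2)),\ \supp \Phi(t, \cdot) \subset [\rho(k_2), 4R]\}
\]
and the map $\mathcal{T}$ on $\mathcal{Y}_T$ obtained by solving the linear equation that results from \eqref{truncated time dependent eq} after freezing $\tilde M(t) := M_{\gamma+\lambda}[\tilde\Phi](t) \in [k_1, k_2]$ in the denominator and replacing one factor of the quadratic coagulation term by $\tilde\Phi$. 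This is a first-order transport equation with Lipschitz velocity $V_{\tilde M}$, a bounded linear coagulation source, and a linear sink, which admits a unique measure-valued solution $\Phi = \mathcal{T}\tilde\Phi$ via the method of characteristics plus Duhamel's formula.

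The second step is to show that $\mathcal{T}$ maps $\mathcal{Y}_T$ into itself. Mass conservation $\int x\,\Phi(t, dx) = 1$ follows by testing the weak formulation \eqref{weak truncated time dependent eq} against an approximation of $\varphi(x) = x$, which is legitimate because $\supp \Phi(t, \cdot)$ lies in the compact set $[\rho(k_2), 4R] \subset \mathbb{R}_*$, and using the symmetry of $K_R$. The support property is proved by characteristic analysis: the velocity $V_{\tilde M}$ vanishes at $\xi = \rho(\tilde M)$ with derivative $-2(1-\gamma-\lambda)/(1-\gamma) < 0$, so this is an attracting equilibrium, and since the map $\rho(\cdot)$ is decreasing and $\tilde M(t) \leq k_2$, we have $\rho(\tilde M(t)) \geq \rho(k_2)$, which keeps any characteristic that starts in $[\rho(k_2), 4R]$ within $[\rho(k_2), 4R]$; the coagulation part preserves this support because $K_R$ vanishes outside $[0,R]^2$ so it creates particles of size at most $2R < 4R$, while particles from $[\rho(k_2), R]$ can only coagulate to sizes $\geq 2\rho(k_2) > \rho(k_2)$. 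The moment bound $k_1 \leq M_{\gamma+\lambda}[\Phi](t) \leq k_2$ is deduced from an ODE-type estimate for $M_{\gamma+\lambda}[\Phi]$, obtained by testing against a regularization of $\xi^{\gamma+\lambda}$ and applying Gronwall, using the strict initial inclusion $M_{\gamma+\lambda}[\Phi_0] \in [2k_1, k_2/2]$ and choosing $T$ small.

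The final step is a contraction estimate: for two reference trajectories $\tilde\Phi_1, \tilde\Phi_2 \in \mathcal{Y}_T$, the difference $\mathcal{T}\tilde\Phi_1 - \mathcal{T}\tilde\Phi_2$ satisfies a linear equation whose sources are controlled by $\sup_t \|\tilde\Phi_1(t) - \tilde\Phi_2(t)\|_{TV}$ — the dependence of $V$ on $\tilde M$ is Lipschitz since $\tilde M \geq k_1 > 0$ — and a Gronwall argument yields a contraction factor of order $CT$ that becomes smaller than $1$ for $T$ small. Uniqueness of the full nonlinear solution then follows, and $C^1$-in-time regularity comes from the equation itself. The main obstacle is the simultaneous propagation of the support condition $\Phi(t, (0, \rho(k_2))) = 0$ and the moment bound through a flow whose equilibrium $\rho(\tilde M(t))$ itself moves with the solution; this is what forces the two-tier structure in the statement, where the initial set $S(2k_1, k_2/2, \rho(k_2))$ is strictly interior to the invariant set $S(k_1, k_2, \rho(k_2))$, giving enough room for the moment to drift while keeping $\rho(\tilde M(t)) \geq \rho(k_2)$ throughout $[0,T]$.
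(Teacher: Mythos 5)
Your overall architecture (characteristics for the transport term, Duhamel/freezing of the scalar $\tilde M(t)=M_{\gamma+\lambda}$, an invariant set with the two-tier constants, moment ODEs plus Gr\"onwall, small $T$) mirrors the paper's Lagrangian construction in Lemma \ref{lem:auxiliary time dep truncated}. However, there is a genuine gap in the contraction step: you propose to run the fixed point in the supremum-in-time \emph{total variation} metric, and the estimate
$\sup_t\|\mathcal T\tilde\Phi_1(t)-\mathcal T\tilde\Phi_2(t)\|_{TV}\le CT\sup_t\|\tilde\Phi_1(t)-\tilde\Phi_2(t)\|_{TV}$ is false in this setting. The reason is structural: $\mathcal T\tilde\Phi$ is (up to the coagulation source and the exponential loss factor) the push-forward of $\Phi_0$ along the characteristic flow of $V_{\tilde M}$, and changing $\tilde\Phi_1$ to $\tilde\Phi_2$ perturbs $\tilde M$, hence perturbs the flow. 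While $|\tilde M_1-\tilde M_2|$ is indeed controlled by the TV difference and the flows then differ by $O\bigl(T\|\tilde M_1-\tilde M_2\|\bigr)$ in the uniform sense (this is exactly \eqref{bounds for the ODE}), push-forwards of a generic Radon measure under two nearby but distinct flows are \emph{not} TV-close: for a Dirac mass, which is perfectly admissible in $S(k_1,k_2,\rho(k_2))$, one has $\|\delta_x-\delta_{x+\varepsilon}\|_{TV}=2$ for every $\varepsilon>0$. So the Lipschitz dependence of $V$ on $\tilde M$ does not transfer to a TV-Lipschitz dependence of the transported measure, and your contraction factor cannot be made $O(T)$ unless you restrict to absolutely continuous data with, say, $W^{1,1}$ densities, which the theorem does not assume (and which would break the later weak-$^{\ast}$ compactness argument on the set $P$).

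This is precisely the point the paper addresses by metrizing $\mathcal M_+$ with the Wasserstein distance $W_1$ from \eqref{W metric} and setting the fixed point for the pair $(F,\alpha)$ in Lagrangian coordinates: for Lipschitz test functions one gets $|\varphi(\ell_\alpha(x,y,s))-\varphi(\ell_\beta(x,y,s))|\lesssim T\|\alpha-\beta\|_{[0,T]}$ (inequality \eqref{function lip estimate}), which is what yields the $O(T)$ contraction constant; the paper explicitly remarks that ``similar estimates cannot be obtained by endowing $\mathcal M_+(\mathbb R_*)$ with the total variation norm.'' To repair your argument, keep everything else but replace the TV metric by $d_T(\mu,\nu)=\sup_{t\in[0,T]}W_1(\mu(t,\cdot),\nu(t,\cdot))$ (or pass fully to the Lagrangian unknown $F$, which also removes the transport term from the equation); the invariance and moment steps you outline then go through essentially as in the paper, since the Wasserstein topology is compatible with testing against the $C^1_c$ functions used in \eqref{weak truncated time dependent eq} on the compact support $[\rho(k_2),4R]$.
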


\begin{lemma}  \label{lem:characteristics}
Let $T>0$ and let $R>0$. Assume $\gamma < 1 $ and $\gamma + \lambda <1.$
Assume $\alpha  \in C([0, T]) $ satisfying 
\begin{equation} \label{bound alpha}
0 < k_1  \leq \alpha(t)  \leq k_2 < \infty, \ \text{ for all } \ t \in [0, T], 
\end{equation}
for some positive constants $k_1$ and $k_2$. 
Consider the ODE 
\begin{equation} \label{ODE} 
\frac{d x(t) } {dt}=  V(x(t), \alpha(t)), \quad x(0)=x_0 >0,
\end{equation} 
with $V(x, \alpha):=\frac{x^{\gamma +\lambda} }{\alpha}- \frac{2}{1-\gamma} x $. 
If $x_0\geq \rho(k_2)$ with $\rho(k_2)$ given by \eqref{delta1formula},
then \eqref{ODE} has a unique solution $X(t,x_0, \alpha) \geq \rho(k_2)$.
Let $X(t, x_{0,1}, \alpha_1)$ and $X(t, x_{0,2}, \alpha_2)$ be the solutions of equation \eqref{ODE} with respect to the functions $\alpha_1$ and $\alpha_2$ satisfying \eqref{bound alpha} and $x_{0,1},x_{0,2} \in[\rho(k_2),4R]$.
Then 
\begin{equation} \label{bounds for the ODE}
|X(t,x_{0,1}, \alpha_1) - X(t,x_{0,2}, \alpha_2) |\leq L_1 |x_{0,1}-x_{0,2}| +  T L_2 \|\alpha_1-\alpha_2 \|_{[0,T]}, 
\end{equation}
 for every  $t \in [0, T]$, where $L_1=L_1(T, \gamma, \lambda,k_1, k_2, R)>0,$ and $ L_2=L_2(T, \gamma, \lambda,k_1, k_2, R)>0$ and 
 where we denote by $\|\cdot\|_{[0,T]}$ the norm $\| f \|_{[0,T]}:= \sup_{ t \in [0, T]} | f(t) |$, for $f\in C([0,T])$. 
\end{lemma}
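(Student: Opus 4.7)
The plan is to treat this as a standard ODE argument once one understands the geometry of the vector field $V(x,\alpha)=x^{\gamma+\lambda}/\alpha - 2x/(1-\gamma)$. First, note that the defining relation $\rho(k_2)^{\gamma+\lambda-1}=2k_2/(1-\gamma)$ gives $V(\rho(k_2),k_2)=0$, and a direct computation yields
\[
V(\rho(k_2),\alpha)=\frac{2\rho(k_2)}{1-\gamma}\left(\frac{k_2}{\alpha}-1\right)\geq 0\quad\text{whenever }\alpha\leq k_2.
\]
Since $\alpha(t)\in[k_1,k_2]$ by \eqref{bound alpha}, the vector field points into $[\rho(k_2),\infty)$ at the left boundary, so any solution starting at $x_0\geq\rho(k_2)$ cannot cross this level. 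For a uniform upper bound, since $\gamma+\lambda<1$, one has $V(x,\alpha)\leq x^{\gamma+\lambda}/k_1$, so multiplying by $(1-\gamma-\lambda)x^{-\gamma-\lambda}$ gives
\[
\frac{d}{dt}\bigl(x(t)^{1-\gamma-\lambda}\bigr)\leq\frac{1-\gamma-\lambda}{k_1},
\]
which, together with $x_0\leq 4R$, yields an a priori upper bound $x(t)\leq M=M(T,\gamma,\lambda,k_1,R)$ on $[0,T]$. On the compact set $[\rho(k_2),M]\times[k_1,k_2]$, $V$ is $C^1$ (since $\rho(k_2)>0$), so the Cauchy--Lipschitz theorem gives existence and uniqueness of $X(t,x_0,\alpha)$ on $[0,T]$.

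For the stability estimate \eqref{bounds for the ODE}, I would set $D(t):=|X(t,x_{0,1},\alpha_1)-X(t,x_{0,2},\alpha_2)|$ and write
\[
V(X_1,\alpha_1)-V(X_2,\alpha_2)=\bigl[V(X_1,\alpha_1)-V(X_2,\alpha_1)\bigr]+\bigl[V(X_2,\alpha_1)-V(X_2,\alpha_2)\bigr].
\]
On $[\rho(k_2),M]\times[k_1,k_2]$, the partial derivative $\partial_x V(x,\alpha)=(\gamma+\lambda)x^{\gamma+\lambda-1}/\alpha-2/(1-\gamma)$ is bounded by a constant $C_1=C_1(\gamma,\lambda,k_1,k_2,R,T)$ (the potential singularity at $x=0$ is avoided by the lower bound $\rho(k_2)$). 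For the $\alpha$-variation, $|V(X_2,\alpha_1)-V(X_2,\alpha_2)|=X_2^{\gamma+\lambda}|\alpha_1^{-1}-\alpha_2^{-1}|\leq (M^{\gamma+\lambda}/k_1^{2})|\alpha_1(t)-\alpha_2(t)|$. Consequently,
\[
D(t)\leq D(0)+C_1\int_0^t D(s)\,ds+\frac{M^{\gamma+\lambda}}{k_1^{2}}\int_0^t|\alpha_1(s)-\alpha_2(s)|\,ds,
\]
and Grönwall's lemma gives
\[
D(t)\leq e^{C_1 T}|x_{0,1}-x_{0,2}|+\frac{e^{C_1 T}M^{\gamma+\lambda}}{k_1^{2}}\,T\,\|\alpha_1-\alpha_2\|_{[0,T]},
\]
which is the desired bound with explicit constants $L_1,L_2$ depending on $T,\gamma,\lambda,k_1,k_2,R$.

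This is essentially a routine argument; I do not expect a substantive obstacle. The only point requiring slight care is the invariance of the lower barrier $\rho(k_2)$, which hinges on the sign computation above and on the hypothesis $\alpha(t)\leq k_2$ (note that replacing $k_2$ by any smaller constant would break positivity of $V$ at the boundary). Everything else is absorbed into the compactness of the region $[\rho(k_2),M]\times[k_1,k_2]$ on which $V$ is smooth, which controls both the local Lipschitz constant in $x$ and the Lipschitz dependence in $\alpha$.
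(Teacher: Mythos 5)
Your proof is correct, but it follows a different route from the paper. The paper exploits the fact that \eqref{ODE} is a Bernoulli equation: the change of variables $y=x^{1-(\gamma+\lambda)}$ turns it into a linear ODE whose explicit variation-of-constants solution immediately gives the Lipschitz dependence on $x_0^{1-(\gamma+\lambda)}$ and on $\alpha$, and the estimate is then transferred back to $x$ using the invariance of $\{x\geq\rho(k_2)\}$ (which makes $y\mapsto y^{1/(1-\gamma-\lambda)}$ Lipschitz on the relevant range). You instead stay in the original variable and run a standard stability argument: the sign computation $V(\rho(k_2),\alpha)\geq 0$ for $\alpha\leq k_2$ gives the invariant lower barrier, the monotonicity of $x^{1-\gamma-\lambda}$ (essentially the same substitution, used only for an a priori bound) gives a uniform upper bound $M$, and then Picard--Lindel\"of plus Gr\"onwall on the compact set $[\rho(k_2),M]\times[k_1,k_2]$ yields \eqref{bounds for the ODE}. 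The paper's explicit formula buys cleaner constants and dispenses with Gr\"onwall; your argument is more generic and would survive perturbations of $V$ that are not of Bernoulli type. One small slip: when $\gamma+\lambda<0$ the factor $X_2^{\gamma+\lambda}$ is maximized at the \emph{lower} end of the interval, so your constant should be $\sup_{x\in[\rho(k_2),M]}x^{\gamma+\lambda}=\max\{M^{\gamma+\lambda},\rho(k_2)^{\gamma+\lambda}\}$ rather than $M^{\gamma+\lambda}$ (and similarly the bound on $\partial_x V$ uses $\rho(k_2)^{\gamma+\lambda-1}$); since the constants are allowed to depend on $\gamma,\lambda,k_1,k_2,R,T$, this changes nothing of substance.
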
 
\begin{proof} 
Since $V(x, \alpha)>0$, for every $x \leq \rho(k_2)$, we deduce that the set $\{ x \geq \rho(k_2)\} $ is an invariant region of \eqref{ODE}. This also implies that a unique solution exists. 


Equation \eqref{ODE} is a Bernoulli equation that can be reduced to the linear ODE
\[
\frac{ dy }{ dt} =\frac{ 1-(\gamma+\lambda) }{\alpha(t)} - \frac{2(1-(\gamma+\lambda))}{1-\gamma } y , \quad y_0:= x_0^{1-(\gamma+\lambda)}
\] 
via the change of variable $y=x^{1-(\lambda + \gamma)}$. 
We deduce that 
\[
Y(t,x_0^{1 -(\gamma+\lambda) }, \alpha)= x_0^{1 -(\gamma+\lambda) } e^{- \frac{2(1-(\gamma+\lambda))}{1-\gamma} t } + \int_0^t \frac{(1-\gamma-\lambda)}{\alpha(s) } e^{- \frac{2(1-(\gamma+\lambda))}{1-\gamma} (t-s) } \der s.
\] 
Since $\alpha_1, \alpha_2 $ satisfy \eqref{bound alpha} and since $x_{0,1}, x_{0,2} \in [\rho(k_2), 4R]$, the above formula implies that
\begin{align*}
\left|Y(t,x_{0,1}^{1 -(\gamma+\lambda) }, \alpha_1)-Y(t,x_{0,2}^{1 -(\gamma+\lambda) }, \alpha_2)\right| &\leq \left| x_{0,1}^{1 -(\gamma+\lambda) } - x_{0,2}^{1 -(\gamma+\lambda) }\right| \\
&+ C(T,\gamma,\lambda, k_1) \| \alpha_1 -\alpha_2 \|_{[0, T]}. 
\end{align*}

Since the set $ \{x \geq \rho(k_2)\} $ is invariant for \eqref{ODE}, the above inequality, together with the definition of $y$ as a function of $x$, implies \eqref{bounds for the ODE}.
\end{proof} 

We will solve equation \eqref{truncated time dependent eq} using Lagrangian coordinates. To this end we introduce the notation 
\begin{align*}
\chi^\alpha_{\varphi}(x,y,t):=\varphi(\ell_{\alpha}(x,y,t))- \varphi(x) - \varphi(y), 
\end{align*}
for $\varphi \in C_c(\mathbb R_*)$, where $\ell_\alpha$ is defined for $x >0$, $y>0$ and $t \geq 0$ as 
\begin{align}\label{define l phi}
X(t, \ell_\alpha(x,y,t), \alpha)=X(t,x, \alpha) + X(t,y, \alpha).
\end{align}
The function $\ell_\alpha $ is well defined because for every fixed time $t\geq 0$ the function $x \mapsto X(t,x, \alpha )$ is increasing. 

\begin{lemma}\label{lem:auxiliary time dep truncated} 
Let $K_R$ be a truncated kernel defined as in \eqref{truncated kernel} as a function of a homogeneous symmetric kernel $K$ satisfying \eqref{kernel bounds},  \eqref{kernel cont},  with $\gamma , \lambda \in \mathbb R$ satisfying \eqref{avoid_gelation_parameters}. 
Let $k_1, k_2, R$ be three positive constants. 
Assume that the initial condition $\Phi_0$ is such that $ 2k_1 \leq \int_{\mathbb R_*} x^{\gamma+\lambda} \Phi_0(\der x ) \leq \frac{k_2}{2}$ and that $\Phi_0((0,\rho(k_2)) \cup (4R, \infty ))=0$ for $\rho(k_2)$ given by \eqref{delta1formula}. 

For a sufficiently small time  $T>0$, depending on $R$, 
there exists a function $F \in C([0, T] , \mathcal M_{+}(\mathbb R_*))$, with  $F (0, \cdot)=\Phi_0$, that satisfies  
\begin{align} \label{eq for F}
&\int_{\mathbb R_*} \varphi(x) F(t,  \der x) =  \int_{\mathbb R_*} \varphi(x) \Phi_0( \der x)  \\
&+ \frac{1}{2} \int_0^t \int_{\mathbb R_*} \int_{\mathbb R_*} K_R(X(s,x, \alpha),X(s,y,\alpha))e^{\frac{1+\gamma}{1-\gamma}s}\chi^\alpha_{\varphi}(x,y,s)F(s,  \der x) F(s,  \der y) \der s, \nonumber
\end{align}
 for every $\varphi \in C_c(\mathbb R_*)$, with
 \begin{equation}\label{alfa}
\alpha(t):=e^{\frac{1+\gamma}{1-\gamma} t}\int_{\mathbb R_*} \left(X(t,x, \alpha)\right)^{\gamma+\lambda} F(t, \der x ), \quad \forall t \in [0,T].
 \end{equation} 

The function $F$ is such that $ k_1 \leq \alpha (t) \leq k_2$ and such that
\begin{equation} \label{support FPhi}
F(t,(0,\rho(k_2) )\cup (4R, \infty ))=0, \quad  \forall t \in[0,T].
\end{equation} 
\end{lemma}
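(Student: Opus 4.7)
The plan is to rewrite the self-consistency relation \eqref{alfa} as a fixed point problem for the scalar function $\alpha \in C([0,T])$ and apply Schauder's theorem on the closed convex set
\[
\mathcal{A}:=\{\alpha\in C([0,T]):k_{1}\leq\alpha(t)\leq k_{2}\text{ for all }t\in[0,T]\},
\]
endowed with the sup norm. For every $\alpha\in\mathcal{A}$, Lemma \ref{lem:characteristics} provides a characteristic flow $X(\cdot,x,\alpha)$ that starts at $x\geq\rho(k_{2})$, stays in $[\rho(k_{2}),\infty)$, and depends in a Lipschitz way on both $x$ and $\alpha$; the auxiliary map $\ell_{\alpha}$ defined by \eqref{define l phi} inherits these regularity properties on the compact set $[\rho(k_{2}),4R]^{2}\times[0,T]$, using that $X(s,\cdot,\alpha)$ is a strictly increasing continuous bijection of $[\rho(k_{2}),\infty)$ onto itself.

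Given $\alpha\in\mathcal{A}$, I would first construct the unique solution $F^{\alpha}\in C([0,T],\mathcal{M}_{+}(\mathbb{R}_{\ast}))$ of \eqref{eq for F} by a Banach fixed-point argument on a ball in the total-variation norm. The right-hand side of \eqref{eq for F} is a quadratic integral operator whose Lipschitz constant is of order $T\|K_{R}\|_{\infty}e^{CT}\|\Phi_{0}\|_{TV}$, hence a contraction for $T$ small. Next I would verify that $\supp F^{\alpha}(t,\cdot)\subset[\rho(k_{2}),4R]$: the lower bound is preserved because the identity $X(s,\ell_{\alpha}(x,y,s),\alpha)=X(s,x,\alpha)+X(s,y,\alpha)\geq 2\rho(k_{2})$ combined with the monotonicity of $X(s,\cdot,\alpha)$ forces $\ell_{\alpha}(x,y,s)\geq\rho(k_{2})$; the upper bound uses the truncation property \eqref{truncated kernel}, which makes the kernel vanish unless $X(s,x,\alpha),X(s,y,\alpha)\leq R$, so $X(s,\ell_{\alpha},\alpha)\leq 2R$ and $\ell_{\alpha}\leq X(s,\cdot,\alpha)^{-1}(2R)\leq 4R$ provided $T$ is chosen small enough that the flow does not expand by more than a factor of two. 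Testing \eqref{eq for F} against $\varphi\equiv 1$ on the now compact support yields $\|F^{\alpha}(t,\cdot)\|_{TV}\leq\|\Phi_{0}\|_{TV}$. Define
\[
\Psi(\alpha)(t):=e^{\frac{1+\gamma}{1-\gamma}t}\int_{\mathbb{R}_{\ast}}X(t,x,\alpha)^{\gamma+\lambda}F^{\alpha}(t,\der x).
\]
At $t=0$, $\Psi(\alpha)(0)=\int x^{\gamma+\lambda}\Phi_{0}(\der x)\in[2k_{1},k_{2}/2]$; a direct differentiation using the uniform bounds on $X$, on $\supp F^{\alpha}$, on $\|F^{\alpha}\|_{TV}$ and on $\|K_{R}\|_{\infty}$ gives a Lipschitz constant on $[0,T]$ that is independent of $\alpha\in\mathcal{A}$. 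Hence for $T$ small one has $\Psi(\mathcal{A})\subset\mathcal{A}$, and $\Psi(\mathcal{A})$ is relatively compact in $C([0,T])$ by Arzel\`a--Ascoli. Continuity of $\Psi$ on $\mathcal{A}$ follows from the Lipschitz dependence of $X$ on $\alpha$ (Lemma \ref{lem:characteristics}) together with a Gr\"onwall estimate on the total-variation distance between $F^{\alpha_{1}}$ and $F^{\alpha_{2}}$. Schauder's theorem then gives $\alpha^{\ast}\in\mathcal{A}$ with $\Psi(\alpha^{\ast})=\alpha^{\ast}$, and $F:=F^{\alpha^{\ast}}$ is the sought solution.

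The main obstacle is the continuity of $\alpha\mapsto F^{\alpha}$: the parameter $\alpha$ enters both through the kernel evaluations $K_{R}(X(s,\cdot,\alpha),X(s,\cdot,\alpha))$ and through the combination rule $\ell_{\alpha}$, so a perturbation of $\alpha$ perturbs the whole coagulation geometry. Propagating the Lipschitz bound of Lemma \ref{lem:characteristics} through a quadratic integral operator on measures, while keeping the estimates uniform on the compact support $[\rho(k_{2}),4R]$, requires a delicate Gr\"onwall argument in an appropriate measure norm. A secondary but non-trivial point is choosing $T$ small enough \emph{uniformly} in $\alpha\in\mathcal{A}$ so that the preimage $X(s,\cdot,\alpha)^{-1}(2R)$ remains below $4R$, which is what ultimately prevents the coagulation from pushing mass outside $[\rho(k_{2}),4R]$ and underlies all of the a priori bounds used in the argument.
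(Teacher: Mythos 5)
There is a genuine gap at the step you yourself identify as the main obstacle: the continuity of $\alpha\mapsto F^{\alpha}$, which you propose to obtain from ``a Gr\"onwall estimate on the total-variation distance between $F^{\alpha_{1}}$ and $F^{\alpha_{2}}$''. This estimate does not hold. The parameter $\alpha$ enters the gain term of \eqref{eq for F} through the transported location $\ell_{\alpha}(x,y,s)$: the gain term deposits mass at the point $\ell_{\alpha}(x,y,s)$, and perturbing $\alpha$ to $\beta$ moves this point by an amount of order $T\|\alpha-\beta\|_{[0,T]}$ (by Lemma \ref{lem:characteristics} and \eqref{define l phi}), but two measures concentrated at nearby yet distinct points are at total-variation distance equal to (twice) their mass, no matter how close the points are. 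Since $\Phi_{0}$, and hence $F^{\alpha}(s,\cdot)$, is a general Radon measure (possibly atomic — nothing in the lemma or in the later fixed-point set $P$ forbids atoms), the difference of the gain terms evaluated on the same $F$ but with $\ell_{\alpha_{1}}$ versus $\ell_{\alpha_{2}}$ is \emph{not} small in TV, so the Gr\"onwall inequality $\|F^{\alpha_{1}}(t)-F^{\alpha_{2}}(t)\|_{TV}\lesssim\int_{0}^{t}\|F^{\alpha_{1}}-F^{\alpha_{2}}\|_{TV}\der s+\|\alpha_{1}-\alpha_{2}\|_{[0,T]}$ cannot be closed. This is exactly the obstruction the paper points out explicitly: its proof runs the stability estimate in the Wasserstein metric $W_{1}$, where Lipschitz test functions give $|\varphi(\ell_{\alpha_{1}}(x,y,s))-\varphi(\ell_{\alpha_{2}}(x,y,s))|\lesssim T\|\alpha_{1}-\alpha_{2}\|_{[0,T]}$, and it remarks that ``similar estimates cannot be obtained by endowing $\mathcal M_{+}(\mathbb R_{*})$ with the total variation norm''.

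The rest of your architecture is sound and is a legitimate variant of the paper's argument: for fixed $\alpha$ the kernel is bounded and the locations $\ell_{\alpha}$ are frozen, so the inner Banach fixed point for $F^{\alpha}$ in TV works, the support and mass bounds are obtained essentially as in the paper (including the uniform-in-$\alpha$ choice of $T$ so that $X(t,x,\alpha)\geq x/2$, hence $\ell_{\alpha}\leq 4R$), and an outer Schauder argument for the scalar $\alpha$ would replace the paper's single Banach fixed point on pairs $(F,\alpha)$. To repair the proof you only need to replace the TV stability estimate by a $W_{1}$ (or any weak-$^{\ast}$/Lipschitz-dual) estimate: since $x\mapsto X(t,x,\alpha)^{\gamma+\lambda}$ is Lipschitz on the compact support $[\rho(k_{2}),4R]$, $W_{1}$-closeness of $F^{\alpha_{1}}$ and $F^{\alpha_{2}}$ is enough to conclude continuity of $\Psi$, after which Schauder applies. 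As written, however, the continuity step fails, and with it the application of Schauder's theorem.
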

\begin{proof}
We define the set 
\[ 
\tilde{X}:= \left\{ H \in \mathcal M_{+}(\mathbb R_*) : H((0,\rho(k_2)) \cup (4R, \infty))=0\right\}
\]
and the set 
\[
X_T:=\left\{ (F,\alpha) \in C([0,T]; \mathcal M_{+}(\mathbb R_*)) \times C([0,T]) :
\begin{aligned}\textup{ } & F(t,\cdot) \in \tilde{X},  k_1 \leq  \alpha(t) \leq k_2,  \forall t \in [0, T],\\
&\sup_{t\in[0,T]}\int_{\mathbb{R}_{\ast}}F(t,\der x)\leq 1+\int_{\mathbb{R}_{\ast}}\Phi_{0}(\der x)
\end{aligned} 
\right\}.
\]

We endow the set $\mathcal M_{+}(\mathbb R_*) $ with the Wasserstein metric $W_{1}$. 
The reason for this choice will become clear in the proof of the contractivity of the evolution operator corresponding to \eqref{eq for F}, \eqref{alfa}. 
We endow the set $C([0,T], \mathcal M_+(\mathbb R_*))$ with the metric induced by the distance 
\[
d_T(\mu, \nu):=\sup_{t \in [0,T] } W_1(\mu(t, \cdot) , \nu(t, \cdot)).
\] 
Similarly, we endow $C([0,T])$ with the norm $\| f \|_{[0,T]}:= \sup_{ t \in [0, T]} | f(t) |$. 

For every $(F, \alpha) \in X_T$, we define the operator $\mathcal T[F,\alpha](t): C_c(\mathbb R_*) \rightarrow \mathbb R_{+}$ as $\mathcal T [F,\alpha](t):= \mathcal T_1[F,\alpha](t)+\mathcal T_2[F,\alpha](t)$  with 
\[
\langle \mathcal T_1[F,\alpha](t), \varphi \rangle=  \int_{\mathbb R_*}
\varphi (x) e^{-\int_{0}^{t}a[F,\alpha](s,x)\der s} \Phi_0( \der x),
\] 
\[
a[F,\alpha](t,x):=e^{\frac{\gamma +1 }{1-\gamma }t} \int_0^\infty K_R(X(t,x,\alpha), X(t,y,\alpha)) F(t, \der y),
\]
\begin{align*}
 \langle \mathcal  T_2[F,\alpha](t), \varphi \rangle=&\frac{1}{2}  \int_0^t \int_{\mathbb R_*} \int_{\mathbb R_*} e^{- \int_s^t a[F](v,x) dv}  K_R(X(s,x,\alpha),X(s,y, \alpha ))e^{\frac{1+\gamma}{1-\gamma}s}\varphi(\ell_\alpha(x,y,s)) \cdot \\
 & \cdot F(s, \der x) F(s, \der y)\der s. 
\end{align*}
Moreover, given $(F, \alpha) \in X_T $, we define the operator $\mathcal A[F,\alpha]: [0, T] \rightarrow \mathbb R_*$ as 
\[
\mathcal A[F,\alpha](t) := e^{\frac{1+\gamma}{1-\gamma} t}\int_{\mathbb R_*} \left(X(t,x, \alpha)\right)^{\gamma+\lambda} F(t, \der x ), \quad \forall t \in [0,T].
\] 
We can now rewrite \eqref{eq for F}, \eqref{alfa} in a fixed point form, namely as $(F,\alpha)=\mathcal F[F, \alpha]$ where
$\mathcal F[F,\alpha]:= (\mathcal T[F,\alpha], \mathcal A[\mathcal T[F,\alpha] , \alpha ])$.

We prove now that $\mathcal F: X_T \rightarrow X_T$. 
Since for every $(F,\alpha) \in X_T$, the operator $\mathcal T[F,\alpha](t)$ is linear and continuous, we deduce that it can be identified with an element of $\mathcal M_{+, b} (\mathbb R_*) .$ Moreover, the operator $t \mapsto \mathcal T[F,\alpha](t)$ is a continuous map from $\mathbb R_{+}$ to $\mathcal M_{+, b} (\mathbb R_*)$, hence $\mathcal T [F,\alpha] \in C([0, T], \mathcal M_{+,b}(\mathbb R_*))$. 
Similarly, for every $(F,\alpha) \in X_T $, we have that $\mathcal A[F, \alpha ] \in C([0,T])$. 

Consider a test function $\varphi $ such that $\varphi(x)=0$ for every $ x \in [\rho(k_2), \infty)$. Then, since $\Phi_0((0,\rho(k_2)))=0$, we deduce that
\[
\langle \mathcal T_1[F,\alpha](t), \varphi \rangle= \int_{[\rho(k_2), \infty)}\varphi(x) e^{-\int_{0}^{t}a[F,\alpha](s,x)\der s} \Phi_0(\der x) =0. 
\] 
Similarly, since $\ell_\alpha(s,x,y) \geq x $ and since $(F,\alpha) \in X_T$, we deduce that
\begin{align*}
 \langle \mathcal T_2[F,\alpha](t), \varphi \rangle \leq \frac{C(R)}{2} \int_0^t \int_{[\rho(k_2), \infty)} \int_{[\rho(k_2), \infty)} e^{\frac{1+\gamma}{1-\gamma}s}\varphi(\ell_\alpha(x,y,s))F(s, \der x) F(s, \der y)\der s=0.
\end{align*}
Therefore, $(\mathcal T[F,\alpha])(t,(0,\rho(k_2)))=0$, for every $t \in [0, T].$

Consider any test function $\varphi$ such that $\varphi(x)=0$ in $[0,4R]$. Since $\Phi_0((4R, \infty))=0$, we have that
\[
\langle \mathcal T_1[F,\alpha](t), \varphi \rangle  \leq 
\int_{(0, 4 R]} \varphi(x)  e^{-\int_{0}^{t}a[F,\alpha](s,x)\der s}  \Phi_0(\der x)
=0.
\] 
Moreover, using the notation $S_1(s):=\{ (x,y) \in \mathbb R_*^2  : X(s,x,\alpha) \leq R, X(s,y,\alpha) \leq R  \} $,
$S_2(s):=\{ (x,y) \in \mathbb R_*^2  : X(s,x,\alpha) > R, X(s,y,\alpha) \leq R  \} $ and $S_3(s):=\{ (x,y) \in \mathbb R_*^2  : X(s,x,\alpha) > R, X(s,y,\alpha) > R  \} $, we have
\begin{align*}
 & \langle \mathcal  T_2[F,\alpha](t), \varphi \rangle \leq \\
 &\leq\frac{1}{2} \int_0^t  \iint_{S_1(s)} e^{\frac{1+\gamma}{1-\gamma}s}
  K_R(X(s,x,\alpha),X(s,y,\alpha))\varphi(\ell_\alpha(x,y,s))F(s, \der x) F(s,\der y)\der s\\
    &+ \frac{1}{2} \int_0^t  \iint_{S_2(s)} e^{\frac{1+\gamma}{1-\gamma}s}
  K_R(X(s,x,\alpha),X(s,y,\alpha))\varphi(\ell_\alpha(x,y,s))F(s, \der x) F(s,\der y)\der s \\ 
  &+ \frac{1}{2} \int_0^t \iint_{S_3(s)} e^{\frac{1+\gamma}{1-\gamma}s}
  K_R(X(s,x,\alpha),X(s,y,\alpha))\varphi(\ell_\alpha(x,y,s))F(s, \der x) F(s,\der y)\der s.
\end{align*}
For $x,y$ such that $X(s,x,\alpha) \geq R$ or $X(s,y,\alpha) \geq R$, we have that $K_R(X(s,x,\alpha),X(s,y,\alpha))=0$. Thus, the second and the third terms above are equal to zero.

 To see that the first term is equal to zero, notice that $X(t, x, \alpha) \geq x e^{- \frac{2}{1-\gamma } t}$. We can thus select $T$ sufficiently small so that $X(t, x, \alpha) \geq \frac{x}{2}$. 
This implies that 
 $ \frac{1}{2} \ell_\alpha(x,y,s) \leq  X(s, \ell_\alpha(x,y,s), \alpha )= X(s, x, \alpha ) + X(s, y, \alpha ) \leq 2 R  $.
 Hence $\ell_\alpha(x,y,s) \leq 4R$. Since $\varphi(x)=0$ for every $x \leq 4 R$, the desired conclusion follows.
 
We now prove that $k_1 \leq \mathcal A[ \mathcal T[F,\alpha] ,\alpha](t) \leq k_2$. To this end notice that 
\begin{align*}
  \left| \mathcal A[\mathcal T[F,\alpha],\alpha](t)  - \int_{\mathbb R_*} x^{\gamma+\lambda} \Phi_0(\der x) \right| 
  & \leq \int_{\mathbb R_*} \left| e^{\frac{1+\gamma}{1-\gamma } t }  (X(t,x,\alpha))^{\gamma+\lambda}-x^{\gamma+\lambda} \right| \mathcal T[F,\alpha](t, \der x) \\
  &+  \int_{\mathbb R_*} x^{\gamma+\lambda} \left| \mathcal T[F,\alpha](t, \der x)  -\Phi_0(\der x) \right| .
\end{align*}
Notice that
\begin{align*}
\left| e^{\frac{1+\gamma}{1-\gamma } t }  (X(t,x,\alpha))^{\gamma+\lambda}-x^{\gamma+\lambda} \right| &\leq  e^{\frac{1+\gamma}{1-\gamma } t } \left| (X(t,x,\alpha))^{\gamma+\lambda}-x^{\gamma+\lambda} \right| + \frac{1+\gamma}{1-\gamma } T x^{\gamma+\lambda}\\
&\leq T C(T, R, k_2) .
\end{align*}
Finally, we have that the term 
\begin{align*}
     \int_{\mathbb R_*} x^{\gamma+\lambda} \left| \mathcal T[F,\alpha](t, \der x)  - \Phi_{0}( \der x)  \right| 
\end{align*}
can be arbitrarily small by taking $T$ small. This is due to the definition of the map $T[F,\alpha]$ and the upper bound for $F $ in the definition of the space $X_{T}$.

We deduce that, for small time $T$, we have that $ \left| \mathcal A[F,\alpha](t) - \int_{\mathbb R_*} x^{\gamma+\lambda} \Phi_0(\der x ) \right| \leq  C(T),$ for every $t \in [0, T] $, where the constant $C(T) $ can be made arbitrarily small. 

We now check that the map $\mathcal F$ is a contraction. 
To this end, we use the fact that $|e^{-x_{1}}-e^{-x_{2}}|\leq|x_{1}-x_{2}|,$ for $x_{1},x_{2}\geq 0.$
Consider $(F,\alpha), (G,\beta) \in X_{T}$. 
Then, for every Lipschitz function $\varphi$ with $||\varphi||_{\textup{Lip}}\leq 1$, we have that
\begin{align*}
\bigg|\int_{\mathbb R_*} \varphi(x) [\mathcal T_1 [F,\alpha ]-\mathcal T_{1}[G,\beta]](t,\der x)\bigg|&\leq \int_{\mathbb R_*} \int_0^t \left| \varphi(x)  \left[a[F,\alpha](s,x)- a[G,\beta ](s,x) \right] \right| \der s \Phi_0(\der x) \\
&\leq \int_{\mathbb R_*} \int_0^t \left| \varphi(x)  \left[ \int_{\mathbb R_*}  K_R(X(s,x,\alpha),X(s,y,\alpha)) F(s, \der y)  \right. \right. \\
& -\left.\left.  \int_{\mathbb R_*}  K_R(X(s,x,\beta),X(s,y,\beta )) G(s, \der y) \right] \der s \Phi_0(\der x)  \right|\\
&\leq I_1[\varphi] + I_2[\varphi],
\end{align*}
where 
\begin{align*}
I_1[\varphi] &:=\int_{\mathbb R_*} \int_0^t \bigg| \varphi(x) \int_{\mathbb R_*} [ K_R(X(s,x,\alpha),X(s,y,\alpha))  \\
&  - K_R(X(s,x,\beta),X(s,y,\beta )) ]  F(s, \der y) \bigg| \der s \Phi_0(\der x)
\end{align*} 
and
\begin{align*} 
I_2[\varphi]:=  \int_{\mathbb R_*} \int_0^t \left| \varphi(x)  \int_{\mathbb R_*}  K_R(X(s,x,\beta),X(s,y,\beta )) \left( F(s, \der y) - G(s, \der y) \right) \der s \Phi_0(\der x ) \right|.
\end{align*} 
Then, the differentiability of the kernel $K_R$, together with inequality \eqref{bounds for the ODE}, implies that 
\begin{align*} 
&\sup_{||\varphi||_{\textup{Lip}\leq 1}} I_1[\varphi] \leq C(R) T \|\Phi_0\|_{TV} (1+ \|\Phi_0\|_{TV}) \|\alpha-\beta\|_{[0,T]}. 
\end{align*} 
Similarly, 
\begin{align*} 
&\sup_{||\varphi||_{\textup{Lip}\leq 1}} I_2[\varphi] \leq C(R) T \|\Phi_0\|_{TV} d_T(F,G). 
\end{align*} 

Using again the differentiability of the kernel $K_R$, inequality \eqref{bounds for the ODE} and the definition of $\ell_\alpha$, we obtain that
\begin{align*}
\sup_{||\varphi||_{\textup{Lip}\leq 1}}  \bigg|\int_{\mathbb R_*}  \varphi(x)[ \mathcal T_2 [F,\alpha](t,\der x)-\mathcal T_{2}[G,\beta](t,\der x)]\bigg| &\leq C(T,R)(1+||\Phi_{0}||_{TV})^{2}  d_T(F,G) \\
&+C(T,R)(1+||\Phi_{0}||_{TV})^{2}   \| \alpha -\beta \|_{[0,T]} ,
\end{align*}
where $T$ can be selected so that $C(T,R)(1+||\Phi_{0}||_{TV})^{2}  \leq \frac{1}{4} $. 
 
 We remark that, in order to obtain this bound, we used the properties of the Wasserstain distance. Similar estimates cannot be obtained by endowing $\mathcal M_+(\mathbb R_*)$ with the total variation norm. 
 
To understand this better, let $\alpha,\beta\in C([0,T])$, with $k_{1}\leq \alpha,\beta\leq k_{2}$. Let $s\in[0,T]$. Using the fact that $||\varphi||_{\textup{Lip}}\leq 1$ and that we can assume we work on a sufficiently nice compact set due to the support of the measures, we can derive the following estimate
\begin{align*}
|\varphi(\ell_\alpha(x,y,s))-\varphi(\ell_\beta(x,y,s))|&\leq |\ell_\alpha(x,y,s)-\ell_\beta(x,y,s)|\\
&\lesssim  |X(s,\ell_\alpha(x,y,s),\alpha)-X(s,\ell_\beta(x,y,s),\alpha)|\\
&\leq|X(s,\ell_\alpha(x,y,s),\alpha)-X(s,\ell_\beta(x,y,s),\beta)|\\
&+|X(s,\ell_\beta(x,y,s),\beta)-X(s,\ell_\beta(x,y,s),\alpha)|.
\end{align*}
Using (\ref{bounds for the ODE}) and (\ref{define l phi}), we have
\begin{align*}
  |X(s,\ell_\alpha(x,y,s),\alpha)-X(s,\ell_\beta(x,y,s),\beta)|&   \leq  |X(s,x,\alpha)-X(s,x,\beta)|\\
  &+ |X(s,y,\alpha)-X(s,y,\beta)|\\
 &\leq 2TL_{2}||\alpha-\beta||_{[0,T]}
\end{align*}
and thus
\begin{align}\label{function lip estimate}
|\varphi(\ell_\alpha(x,y,s))-\varphi(\ell_\beta(x,y,s))|\lesssim  T ||\alpha-\beta||_{[0,T]}.
\end{align}
More precisely, inequality (\ref{function lip estimate}) was needed to prove the contractivity of the map $\mathcal{T}_{2}$.
 
In order to prove the upper bound for the map $T[F,\alpha],$ we first observe that
\begin{align*}
\langle \mathcal T_1[F,\alpha](t), 1 \rangle= \int_{[\rho(k_2), \infty)} e^{-\int_{0}^{t}a[F,\alpha](s,x)\der s} \Phi_0(\der x) \leq ||\Phi_{0}||_{TV}
\end{align*}
and then that
\begin{align*}
 \langle \mathcal T_2[F,\alpha](t), 1 \rangle &\leq \frac{C(R)}{2}e^{\frac{1+\gamma}{1-\gamma}t} \int_0^t \int_{[\rho(k_2), \infty)} \int_{[\rho(k_2), \infty)} F(s, \der x) F(s, \der y)\der s\\
 &\leq \frac{C(R)}{2}e^{\frac{1+\gamma}{1-\gamma}T}T(1+||\Phi_{0}||_{TV})^{2}\leq 1,
\end{align*}
for $T$ sufficiently small. Thus,
\begin{align*}
\sup_{t\in[0,T]}\langle \mathcal T[F,\alpha](t), 1 \rangle\leq 1+||\Phi_{0}||_{TV}.
\end{align*}
Finally, 
\begin{align*}
  \|  \mathcal A [\mathcal T[F,\alpha], \alpha ]-   \mathcal A [\mathcal T[G,\beta], \beta] \|_{[0,T]} &\leq  C(R) d_T([\mathcal T[F,\alpha],\mathcal T[G,\beta])\\
  &+ C_1(T,R)T(1+||\Phi_{0}||_{TV}) \|\alpha - \beta \|_{[0,T]} \\
  &\leq C(T,R)T(1+||\Phi_{0}||_{TV})^{2} [d_T(F,G)+\|\alpha - \beta \|_{[0,T]}],
\end{align*}
where the constant $TC(T,R)(1+||\Phi_{0}||_{TV})^{2}$ can be made smaller than $\frac{1}{4}$ by selecting $T$ small. 
The operator $\mathcal F$ is therefore a  contraction and, by Banach fixed point theorem, we conclude that it has a unique fixed point $(F, \alpha ) \in X_T$. 
\end{proof}

\begin{proof}[Proof of Theorem \ref{thm:time dep truncated}]
Assume $T>0$ to be as in Lemma \ref{lem:auxiliary time dep truncated}. 
Let $(F, \alpha ) \in C([0,T], \mathcal M_+(\mathbb R_*)) $ $\times C([0,T])$ be the solution of \eqref{eq for F}, \eqref{alfa}. 
Let $X(t,y, \alpha)$ be the solution of equation \eqref{ODE}. 
Let $ \Phi \in C^1([0,T], \mathcal M_+(\mathbb R_*))$ be the function defined by duality as 
\begin{align}\label{phi function of F}
\int_{\mathbb R_*} \varphi(x) \Phi(t, \der x)=\int_{\mathbb R_*} \varphi(X(t,y, \alpha))  e^{\frac{1+\gamma}{1-\gamma }t} F(t, \der y),
\end{align} 
for every $\varphi \in C_c(\mathbb R_*)$. 
We prove that the function $\Phi$ is such that
\begin{align}\label{support truncated time solution}
\Phi (t,(0, \rho(k_2)) \cup (4R, \infty ))=0, \quad \forall t \in [0,T],
\end{align}
for $\rho(k_2)$ defined as in (\ref{delta1formula}).
To this end notice that, since $F(t,(0,\rho(k_{2}))=0$, for every $t\in[0,T]$, and since $ X(t, y, \alpha) \geq \rho(k_2)$, for every $y \geq \rho(k_2)$,  we have that 
\[
\int_{\mathbb R_* } \varphi(x) \Phi(t, \der x) =\int_{[\rho(k_2),\infty)} \varphi(X(t,y, \alpha))  e^{\frac{1+\gamma}{1-\gamma }t} F(t, \der y)=0,
\]
for every test function $\varphi$ such that $\varphi(x)=0$ if $x \geq \rho(k_2)$. 
Similarly,
since $t \mapsto X(t, x, \alpha) $ is a decreasing function for large values of $x$, we deduce that $ X(t, x, \alpha)\leq 4R$, for every $x \leq 4R $. Hence, 
\[
\int_{\mathbb R_* } \varphi(x) \Phi(t, \der x) =\int_{[\rho(k_2), 4 R]} \varphi(X(t,y, \alpha))  e^{\frac{1+\gamma}{1-\gamma }t} F(t, \der y)=0,
\]
for every test function $\varphi$ such that $\varphi(x)=0$ if $x \leq 4 R$. 

We now prove that $\Phi$ satisfies \eqref{weak truncated time dependent eq}. 
By its definition as the fixed point of the operator $\mathcal F$, we deduce that $F \in C^1([0, T] , \mathcal M_+(\mathbb R_*))$. 
Differentiating both sides of the equality \eqref{phi function of F} in time, we deduce that 
\begin{align*}
 \frac{\der}{\der t} \int_{\mathbb R_*} \varphi(x) \Phi(t, \der x)& =   \int_{\mathbb R_*}  \varphi(X(t, y, \alpha)) e^{\frac{1+\gamma}{1-\gamma} t } \partial_t F(t,\der y)  \\
&+\int_{\mathbb R_*}  \varphi'(X(t, y,\alpha)) \bigg[ \frac{(X(t,y,\alpha))^{\gamma+\lambda}}{\alpha(t)}  -\frac{2}{1-\gamma} X(t,y,\alpha) \bigg] e^{\frac{1+\gamma}{1-\gamma} t }F(t, \der y)  \\
&+ \frac{1+\gamma}{1-\gamma}\int_{\mathbb R_*}  \varphi(X(t, y, \alpha)) e^{\frac{1+\gamma}{1-\gamma} t } F(t,\der y).
\end{align*}
Using the fact that $F$ satisfies equation \eqref{eq for F}, we deduce that 
\begin{align*}
   \int_{\mathbb R_*}   \varphi(X(t, y, \alpha))& e^{\frac{1+\gamma}{1-\gamma} t } \partial_t F(t,\der y) \\
   &= \frac{1}{2}\int_{\mathbb R_*} \int_{\mathbb R_*}e^{ \frac{2(1+\gamma)}{1-\gamma}t} F(t,\der x) F(t,\der y)K_R(X(t,x,\alpha),X(t,y,\alpha))\\
&\times[ \varphi(X(t, x,\alpha)+X(t, y,\alpha ))- \varphi(X(t,y,\alpha ))- \varphi(X(t, x,\alpha))].
\end{align*}
Hence, using again the equality \eqref{phi function of F}, we have that
\begin{align*}
 \frac{\der}{\der t} \int_{\mathbb R_*} \varphi(x) \Phi(t,\der x)&= \int_{\mathbb R_*}  \varphi'(X(t, y,\alpha)) \left[   \frac{(X(t,y,\alpha))^{\gamma+\lambda}}{\alpha(t)}  -\frac{2}{1-\gamma} X(t,y,\alpha )\right] e^{\frac{1+\gamma}{1-\gamma} t } F(t,\der y)  \\
&+ \frac{1+\gamma}{1-\gamma} \int_{\mathbb R_*}  \varphi(X(t, y,\alpha)) e^{\frac{1+\gamma}{1-\gamma} t } F(t,\der y)  \\
&+  \frac{1}{2}\int_{\mathbb R_*} \int_{\mathbb R_*} K_R(X(t,x,\alpha),X(t,y,\alpha)) e^{ \frac{2(1+\gamma)}{1-\gamma}t}F(t,\der x) F(t,\der y) \\
&\cdot\left[ \varphi(X(t, x,\alpha)+X(t, y,\alpha))- \varphi(X(t,y,\alpha))- \varphi(X(t, x,\alpha)) \right]  \\  &= \int_{\mathbb R_*}  \varphi'(y) \left[\frac{y^{\gamma+\lambda}}{M_{\gamma+\lambda}(\Phi(t))}   -\frac{2}{1-\gamma} y\right] \Phi(t,\der y)+ \frac{1+\gamma}{1-\gamma} \int_{\mathbb R_*}  \varphi(y) \Phi(t,\der y)  \\
&+  \frac{1}{2}\int_{\mathbb R_*} \int_{\mathbb R_*} K_R(x ,y)\left[ \varphi( x+y)- \varphi(y)- \varphi(x) \right] 
\Phi(t,\der x)\Phi(t,\der y).
\end{align*} 
We conclude that $\Phi$ satisfies \eqref{weak truncated time dependent eq}. 
\end{proof}

\subsection{Existence of a stationary solution for the truncated equation} 
\begin{theorem}\label{thm:stationary truncated}
Assume $K_R $ to be a truncated kernel as in \eqref{truncated kernel} defined as a function of the homogeneous symmetric kernel $K$ that satisfies \eqref{kernel bounds},  \eqref{kernel cont},  for parameters $\gamma, \lambda \in \mathbb R$ such that \eqref{avoid_gelation_parameters} holds and such that $\gamma + 2\lambda \geq  1$, $ \gamma >-1 $. 
There exists $\overline R>0$ such that, for every truncation parameter $R> \overline R$, there exists a $\Phi \in \mathcal M_{+}(\mathbb R_*)$ satisfying the equation
\begin{align}\label{weak truncated steady state eq}
&  \int_{\mathbb R_*}  \varphi'(x) \left[ \frac{2}{1-\gamma}x - \frac{x^{\gamma + \lambda}}{\int_{\mathbb R_*}z^{\gamma +\lambda}\Phi(\der z)}  \right] \Phi(\der x) 
 - \frac{1+\gamma}{1-\gamma} \int_{\mathbb R_*}  \varphi(x) \Phi(\der x)\\
& =\frac{1}{2}\int_{\mathbb R_*} \int_{\mathbb R_*} K_R(x,y) \left[ \varphi(x+y)- \varphi(x) - \varphi(y) \right] \Phi( \der x) \Phi(\der y), \nonumber 
\end{align} 
for every $\varphi \in C^{1}_c(\mathbb R_*)$. 
The solution is such that
\begin{equation} \label{uniform bounds self-sol}
 \int_{\mathbb R_*} x \Phi(\der x) = 1,  \quad  \int_{\mathbb R_*} x^{\gamma +\lambda} \Phi(\der x) \leq C_{1} \quad \textup{and} \quad\int_{\mathbb R_*} x^{2-\gamma-\lambda} \Phi(\der x) \leq C_{2},
\end{equation} 
 for some constants $C_{1}, C_{2}>0$ that do not depend on the truncation $R$. Additionally, $\Phi$ is such that $\Phi((0, \rho(C_1)))=0$ where $\rho(C_1)$ is given by \eqref{delta1formula}.
\end{theorem}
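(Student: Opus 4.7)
My plan is to obtain $\Phi = \Phi_R$ by applying Tychonoff's fixed point theorem to the short-time evolution map $S_T$ supplied by Theorem \ref{thm:time dep truncated}, following the strategy outlined in Section \ref{sec:ideas}. Concretely, I will fix constants $C_1, C_2 > 0$, independent of $R$ for $R \geq \overline{R}$ large enough, and define the convex set
\begin{equation*}
\mathcal{P}_R := \left\{ H \in \mathcal{M}_+(\mathbb{R}_*) \, : \, M_1(H) = 1,\ \tfrac{1}{C_2} \leq M_{\gamma+\lambda}(H) \leq C_1,\ \supp H \subset [\rho(C_1), 4R],\ M_{2-\gamma-\lambda}(H) \leq C_2 \right\},
\end{equation*}
which is weak-$*$ compact by Banach--Alaoglu, since its elements are supported in a fixed compact interval with uniformly bounded total variation. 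The constants will be chosen so that the smaller set obtained by replacing $C_1$ by $C_1/2$ and $1/C_2$ by $2/C_2$ meets the hypotheses of Theorem \ref{thm:time dep truncated}, making $S_T$ well-defined on $\mathcal{P}_R$ for some $T > 0$ independent of the initial datum in $\mathcal{P}_R$.

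The heart of the argument is to verify that $S_T(\mathcal{P}_R) \subset \mathcal{P}_R$, following the five-step blueprint of Section \ref{sec:ideas}. Using smooth cutoff approximations $\varphi_{\varepsilon,N}(x) = x^{p} \chi_{\varepsilon,N}(x)$ of $\varphi(x) = x^{p}$ as test functions in \eqref{weak truncated time dependent eq} and passing to the limit produces the moment ODE
\begin{equation*}
\frac{d}{dt} M_p(\Phi(t)) = \frac{1 + \gamma - 2p}{1-\gamma}\, M_p + \frac{p\, M_{p+\gamma+\lambda-1}}{M_{\gamma+\lambda}} + \frac{1}{2}\!\int\!\!\!\int K_R \bigl[(x+y)^p - x^p - y^p\bigr]\, \Phi(t,\der x)\, \Phi(t,\der y).
\end{equation*}
Taking $p = 1$ yields conservation of $M_1$ because the coagulation integral vanishes and the remaining terms cancel. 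Taking $p = \gamma + \lambda$, the coefficient $(1+\gamma-2(\gamma+\lambda))/(1-\gamma) = (1-\gamma-2\lambda)/(1-\gamma)$ is $\leq 0$ under $\gamma + 2\lambda \geq 1$, the coagulation integral is $\leq 0$ by subadditivity of $x \mapsto x^{\gamma+\lambda}$, and the linear term $(\gamma+\lambda) M_{2(\gamma+\lambda)-1}/M_{\gamma+\lambda}$ is controlled uniformly in $R$ via the support lower bound $\rho(C_1)$ combined with $M_1 = 1$; a Gr\"onwall-type comparison then preserves $M_{\gamma+\lambda}(t) \leq C_1$. For the support, the Lagrangian representation \eqref{phi function of F} together with Lemma \ref{lem:characteristics} and the strict positivity of $V(x,\alpha) = x^{\gamma+\lambda}/\alpha - 2x/(1-\gamma)$ for $x < \rho(\alpha)$, combined with the implication $\alpha(t) \leq C_1 \Rightarrow \rho(\alpha(t)) \geq \rho(C_1)$, prevents mass from entering $(0, \rho(C_1))$. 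For $p = 2-\gamma-\lambda$ one has $p + \gamma + \lambda - 1 = 1$, so $M_{p+\gamma+\lambda-1} = 1$; the coagulation integral is controlled via the elementary inequality $(x+y)^p - x^p - y^p \lesssim xy^{p-1} + yx^{p-1}$ combined with the kernel bound \eqref{kernel bounds} and moments already under control, yielding $M_{2-\gamma-\lambda} \leq C_2$. Finally, Cauchy--Schwarz in the form \eqref{CS} produces $M_{\gamma+\lambda} \geq 1/C_2$.

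Weak-$*$ continuity of $S_T$ on $\mathcal{P}_R$ follows from the continuous dependence on initial data established in Lemma \ref{lem:auxiliary time dep truncated} (the $W_1$ metric and weak-$*$ convergence coincide on compactly supported, uniformly bounded families of measures) propagated through the transformation \eqref{phi function of F}, so Tychonoff's theorem yields a fixed point $\Phi_R \in \mathcal{P}_R$ of $S_T$. A standard iteration (taking fixed points $\Phi_R^{(n)}$ of $S_{T/2^n}$, extracting a weak-$*$ limit in $\mathcal{P}_R$, and invoking uniqueness from Theorem \ref{thm:time dep truncated} to identify the limit as stationary for all $S_t$) produces a measure satisfying \eqref{weak truncated steady state eq}; the bounds \eqref{uniform bounds self-sol} and the property $\Phi_R((0,\rho(C_1))) = 0$ are inherited from membership in $\mathcal{P}_R$. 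The principal obstacle is the rigorous justification of the moment ODE for test functions $x^p$ that are neither compactly supported nor smooth at the origin, combined with the requirement that $C_1$ and $C_2$ be genuinely independent of $R$: this forces the cutoff-error estimates to exploit both the uniform support in $[\rho(C_1), 4R]$ and the kernel bounds \eqref{kernel bounds}, and the borderline case $\gamma + 2\lambda = 1$ is delicate because the sign of the leading coefficient in the $M_{\gamma+\lambda}$ ODE degenerates to zero, so the bound must come entirely from the balance between coagulation and the linear moment transfer term.
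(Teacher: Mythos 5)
Your overall architecture --- short-time truncated evolution, an invariant convex weak-$*$ compact set, Tychonoff, and a dyadic limiting argument to pass from fixed points of $S_T$ to a stationary measure --- is the same as the paper's. The genuine gap is in your Step 2, the uniform-in-$R$ bound $M_{\gamma+\lambda}\leq C_1$, which is the quantitative heart of the invariance. You discard the coagulation integral after noting it is $\leq 0$ by subadditivity and then close by Gr\"onwall, bounding the transfer term through the support cutoff $\rho(C_1)$ and $M_1=1$. This cannot work for any choice of $C_1$: since $\rho(C_1)^{\gamma+\lambda-1}=2C_1/(1-\gamma)$ by \eqref{delta1formula}, the estimate $M_{2(\gamma+\lambda)-1}\leq \rho(C_1)^{\gamma+\lambda-1}M_{\gamma+\lambda}$ turns the transfer term into a source of size $\tfrac{2(\gamma+\lambda)}{1-\gamma}C_1$, so at $M_{\gamma+\lambda}=C_1$ your differential inequality gives drift at least $\tfrac{1+\gamma}{1-\gamma}C_1>0$ (the hypothesis $\gamma>-1$ works against you here), i.e.\ the region is not invariant under that estimate; interpolating $M_{2(\gamma+\lambda)-1}\lesssim M_\gamma+1$ and then using $M_\gamma\leq \rho(C_1)^{-\lambda}M_{\gamma+\lambda}$ fares no better, because $\lambda/(1-\gamma-\lambda)\geq 1$ exactly when $\gamma+2\lambda\geq 1$. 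The paper's Proposition \ref{prop:inv region} closes this step precisely by \emph{keeping} the quadratic loss part of the coagulation operator, bounded below via $K_R\geq K/C$ on $[\rho(2C_1),R/4]^2$ --- this is where the structure of the truncation \eqref{truncated kernel} and the largeness of $\overline R$ enter, with the $R$-dependent error $\tilde{\varepsilon}$ absorbed --- which produces the decisive term $-c_6 M_\gamma M_{\gamma+\lambda}^2$ in \eqref{constant gamma lambda larger one}; in the critical case $\gamma+2\lambda=1$ one additionally needs the lower bound $M_\gamma\geq \delta M^{-\lambda}$ extracted from $M_1=1$. You flag the critical case as delicate but give no mechanism, and your supercritical argument as written fails for the same structural reason.

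A second, smaller gap concerns weak-$*$ continuity of the evolution map: you infer it from ``continuous dependence on initial data established in Lemma \ref{lem:auxiliary time dep truncated}'', but that lemma only proves existence and uniqueness of the Lagrangian fixed point by contraction. Lipschitz dependence of $\Phi(t,\cdot)$ on $\Phi_0$ in the $W_1$ metric --- through the explicit $\Phi_0$-dependence of $\mathcal T_1$, the dependence of the characteristics and of $\ell_\alpha$ on $\alpha$, and the pushforward \eqref{phi function of F} --- still has to be proved with constants uniform over the candidate invariant set; this is exactly the content the paper supplies via the dual problem (Lemma \ref{lem:dual eq}, Proposition \ref{dual function lipschitz}, Proposition \ref{prop:weak star cont}). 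Your route is plausible and would be more direct, but as written it is an assertion rather than a proof, so at minimum this stability estimate must be carried out explicitly.
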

To prove the theorem we introduce the semigroup $\{S(t)\}_{\{ t \geq 0\}}$ defined as 
$S(0)\Phi_0=\Phi_0$ and $S(t) \Phi_0 = \Phi(t, \cdot)$, where $\Phi$ is the solution of equation \eqref{weak truncated time dependent eq} constructed in the previous section.
We want to apply Tychonoff fixed point theorem to prove the existence of a stationary solution for equation \eqref{truncated time dependent eq}.
To this end we need to find an invariant region for $S(t)$ (Proposition \ref{prop:inv region}) and prove the continuity in the weak-$^{\ast}$ topology of the map $\Phi \mapsto S(t)\Phi$ for every time $t$ (Proposition \ref{prop:weak star cont}).
In this way we prove that for every time $t$ there exists a fixed point $\hat{\Phi}_t$ such that $S(t)\hat{\Phi}_t = \hat{\Phi}_t.$
We will then prove that $S$ is continuous (Proposition \ref{prop:time cont}) in time and conclude that the limit as $t$ tends to zero of $\hat{\Phi}_t $ is a solution of equation \eqref{weak truncated steady state eq}. This is a standard method used in the study of coagulation equations to prove existence of self-similar profiles. For more details, see \cite[Theorem 1.2]{escobedo2005self}.

\begin{proposition}[Invariant region] \label{prop:inv region}
Assume $K_R $ to be a truncated kernel as in \eqref{truncated kernel} defined as a function of the homogeneous symmetric kernel $K$ that satisfies \eqref{kernel bounds},  \eqref{kernel cont},  for parameters $\gamma, \lambda \in \mathbb R$ such that \eqref{avoid_gelation_parameters} holds and such that $\gamma + 2\lambda \geq  1$, $\gamma >-1 $. Then there exist some positive constants $C_1, C_2$ that do not depend on the truncation $R$ such that the set
\begin{align} \label{invariant region}
P:=\left\{ 
H \in \mathcal M_{+}(\mathbb R_*) : \quad \begin{aligned}   & M_1(H )=1,\ 
H((0, \rho(C_1)) \cup (4R,\infty) )=0, \\
& \frac{1}{C_{2}} \leq M_{\gamma + \lambda}(H) \leq C_1
 \end{aligned} 
 \right\}
\end{align} 
is invariant under the evolution operator $S(t),$ where $\rho(C_1)$ was defined in \eqref{delta1formula}. 
\end{proposition}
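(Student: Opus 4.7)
I would follow the bootstrap strategy sketched in Section \ref{sec:ideas}, deriving a family of moment identities from the weak equation \eqref{weak truncated time dependent eq} and closing on the four defining conditions of $P$ via a continuity argument. Because the solution $\Phi(t,\cdot)$ produced by Theorem \ref{thm:time dep truncated} is compactly supported in $[\rho(k_2),4R]$ for each $t$, one may test against (smooth cutoffs of) the power functions $\varphi(x)=x^p$ without boundary issues, and the upper support bound $\Phi(t,(4R,\infty))=0$ is already furnished by that theorem; only the remaining three conditions in \eqref{invariant region} require work.

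\textbf{Mass conservation and upper bound for $M_{\gamma+\lambda}$.} Testing \eqref{weak truncated time dependent eq} with $\varphi(x)\approx x$ gives $\dot M_1+M_1-1=0$, because the coagulation contribution annihilates affine test functions. Hence $M_1(\Phi_0)=1$ propagates to $M_1(\Phi(t))=1$ for all $t$. Testing next with $\varphi(x)\approx x^{\gamma+\lambda}$ and using the subadditivity of $t\mapsto t^{\gamma+\lambda}$ for $\gamma+\lambda\in(-1,1)$ to sign the coagulation term, one obtains
\[
\dot M_{\gamma+\lambda}+\frac{\gamma+2\lambda-1}{1-\gamma}M_{\gamma+\lambda}\leq \frac{\gamma+\lambda}{M_{\gamma+\lambda}}M_{2(\gamma+\lambda)-1}.
\]
Under $\gamma+2\lambda\geq 1$ the linear coefficient on the left is non-negative, providing dissipation. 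If $\gamma+\lambda\leq 0$ the right hand side is itself non-positive and $M_{\gamma+\lambda}$ simply decreases; if $\gamma+\lambda>0$, I would interpolate $M_{2(\gamma+\lambda)-1}$ against $M_1=1$ and $M_{\gamma+\lambda}$ (noting $2(\gamma+\lambda)-1<1$) to produce a closed differential inequality for $M_{\gamma+\lambda}$, and then Grönwall gives the uniform bound $M_{\gamma+\lambda}(t)\leq C_1$ with $C_1$ depending only on $\gamma,\lambda,c_1,c_2$.

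\textbf{Support preservation and lower bound for $M_{\gamma+\lambda}$.} Once $M_{\gamma+\lambda}(t)\leq C_1$, the definition of $\rho$ in \eqref{delta1formula} gives $\rho(M_{\gamma+\lambda}(t))\geq \rho(C_1)$, so the characteristic velocity $V(x,M_{\gamma+\lambda}(t))$ from Lemma \ref{lem:characteristics} is strictly positive on $(0,\rho(C_1))$. The Lagrangian representation \eqref{phi function of F} then shows that no mass can enter $(0,\rho(C_1))$ through the transport, and since coagulation only moves mass to larger clusters, the property $\Phi_0((0,\rho(C_1)))=0$ propagates to $\Phi(t,(0,\rho(C_1)))=0$. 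For the lower bound, I would test the weak equation against $\varphi(x)\approx x^{2-\gamma-\lambda}$ (with $2-\gamma-\lambda>1$); bounding the coagulation term via the upper bound in \eqref{kernel bounds} decomposes it into products of moments of orders $1$, $1-\lambda$, $1-\gamma-2\lambda$ and $1+\gamma+\lambda$, which can in turn be interpolated against $M_1=1$, $M_{\gamma+\lambda}$ and $M_{2-\gamma-\lambda}$ itself; Grönwall yields $M_{2-\gamma-\lambda}(t)\leq C_2$, and applying Cauchy-Schwarz as in \eqref{CS} gives $M_{\gamma+\lambda}(t)\geq 1/C_2$.

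\textbf{Closing the loop and main obstacle.} The preceding computations are a priori estimates, and since Theorem \ref{thm:time dep truncated} only guarantees existence on a short interval, I would close the argument with a standard continuity/bootstrap: let $t^\star$ be the first time some condition defining $P$ fails; the estimates just listed then force each bound to be strict at $t^\star$, contradicting minimality, so $P$ is in fact invariant. The principal technical obstacle will be the interpolation estimates for the intermediate moments in the two ODE steps, and in particular the necessity of ensuring that the resulting constants $C_1,C_2$ depend only on $\gamma,\lambda,c_1,c_2$ and not on the truncation parameter $R$; this matters because the coagulation decomposition produces moments of various (possibly negative) orders, whose control must come from the support lower bound $\rho(C_1)$ and the dissipative coefficient $(\gamma+2\lambda-1)/(1-\gamma)$ rather than from truncating the tail of $\Phi$.
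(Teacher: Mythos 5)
Your overall architecture (mass conservation, upper bound for $M_{\gamma+\lambda}$, support preservation, $M_{2-\gamma-\lambda}$ bound, Cauchy--Schwarz for the lower bound, continuity/bootstrap in time) matches the paper, but the central step --- the uniform upper bound $M_{\gamma+\lambda}\leq C_1$ --- has a genuine gap. First, your interpolation is not available: since $\gamma+\lambda<1$ we have $2(\gamma+\lambda)-1<\gamma+\lambda$, so the exponent $2(\gamma+\lambda)-1$ lies \emph{below} the interval $[\gamma+\lambda,1]$ and $M_{2(\gamma+\lambda)-1}$ cannot be bounded by $M_{\gamma+\lambda}$ and $M_1=1$; it is a lower-order moment, controlled only through $M_\gamma$ (the paper interpolates between the orders $\gamma$ and $1$, giving $M_{2(\gamma+\lambda)-1}\lesssim M_\gamma+1$), and $M_\gamma$ admits no a priori upper bound in terms of the quantities defining $P$. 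Second, and more fundamentally, by signing the coagulation term non-positive and discarding it you throw away the only dissipation that works uniformly in the regime $\gamma+2\lambda\geq1$: in the critical case $\gamma+2\lambda=1$ (included in the hypotheses, and note that here $\gamma+\lambda>0$ automatically, so your ``$\gamma+\lambda\leq0$'' branch is vacuous) the linear coefficient $\frac{\gamma+2\lambda-1}{1-\gamma}$ vanishes, your differential inequality has a strictly positive right-hand side, and Gr\"onwall gives no uniform bound at all.

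The paper closes this step precisely by \emph{retaining} the loss part of the coagulation operator: using the lower bound in \eqref{kernel bounds} and the property $K_R\geq K/C$ on $[\rho(2C_1),R/4]^2$ (this is also where the requirement that $R$ be large enters, to control the truncation tails) one keeps a term of the form $-c\,(1-\gamma-\lambda)\,M_\gamma M_{\gamma+\lambda}$; multiplying the inequality by $M_{\gamma+\lambda}$ and inserting $M_{2(\gamma+\lambda)-1}\lesssim M_\gamma+1$ yields
\begin{equation*}
\partial_t M_{\gamma+\lambda}^2 \leq -c_3 M_{\gamma+\lambda}^2 + M_\gamma\left(c_4+c_5 M_{\gamma+\lambda}-c_6 M_{\gamma+\lambda}^2\right)+c_7 ,
\end{equation*}
so that for $M_{\gamma+\lambda}$ beyond a fixed root the $M_\gamma$-coefficient is negative and the bound closes regardless of the size of $M_\gamma$; in the critical case $\gamma+2\lambda=1$ one additionally needs the lower bound $M_\gamma\gtrsim\delta$, obtained from $M_1=1$ together with $M_{\gamma+\lambda}\geq1$, to let the negative quadratic term dominate. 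Without reinstating this negative coagulation contribution (or an equivalent mechanism), your Step~2 does not produce a constant $C_1$ independent of $R$, and the subsequent steps (support preservation at level $\rho(C_1)$, the $M_{2-\gamma-\lambda}$ bound, and the lower bound $1/C_2$) all rest on it. The remaining steps of your sketch are essentially the paper's argument and are fine in outline, including the use of $\gamma>-1$ to get the dissipative coefficient for $M_{2-\gamma-\lambda}$ and the support lower bound to control the coagulation gain there.
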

\begin{proof}

Let $\Phi_0$ and $K_R$ be as in Lemma \ref{lem:auxiliary time dep truncated}. 
Let $T$ be as in Lemma \ref{lem:auxiliary time dep truncated}. We prove that there exist two constants $C_{1},C_{2}>0$ that do not depend on the truncation parameter $R$ such that the solution $\Phi$ obtained in Theorem \ref{thm:time dep truncated} is such that for every time $t \in [0, T] $ 
\begin{itemize}
        \item[(1)]
$\int_{\mathbb R_* } x \Phi(t,\der x) = 1$; 
    \item[(2)]$\int_{\mathbb R_* } x^{\gamma + \lambda } \Phi(t, \der x) \leq \max\{ C_1,\int_{\mathbb R_* } x^{\gamma + \lambda } \Phi_{0}( \der x)\}$; 
    \item[(3)] $\int_{\mathbb R_* } x^{2- \gamma - \lambda } \Phi(t, \der x) \leq  \max\{C_{2},\int_{\mathbb R_* } x^{2-\gamma-\lambda} \Phi_{0}( \der x)\}$.
\end{itemize}
Additionally, we prove that we can conclude from (2) that $\Phi(t,(0,\rho(C_{1}))=0$, for every $t\in[0,T]$.

We first notice that, since $\Phi(t, \cdot)$ has compact support, we can consider in equation \eqref{weak truncated time dependent eq} test functions $\varphi$ such that $\varphi(x)=x^{k} $, for $k\in\mathbb{R}.$

Let us prove (1). We consider in equation \eqref{weak truncated time dependent eq} a test function $\varphi$ such that $\varphi(x)=x $.
We deduce that 
\[
\frac{d}{dt} \int_0^\infty \xi \Phi(t, \der \xi)=1- \int_0^\infty \xi \Phi(t,\der \xi).
\]
Hence 
\[
\int_0^\infty \xi \Phi(t,\xi) d \xi = 1+ \left( \int_{\mathbb R_*} \xi \Phi_0(\der\xi) -1 \right) e^{-t} 
\]
which since $\int_{\mathbb R_*} \xi \Phi_0(\der \xi)=1 $ implies 
\begin{equation} \label{lower bound mass truncated}
 \int_{\mathbb R_*}\xi \Phi(t, \der \xi) = 1.
\end{equation}

We prove (2). 
We start by proving this for $\gamma + 2 \lambda >1. $
Consider a test function $\varphi $ such that $\varphi(x)=x^{\gamma +\lambda}$
in equation \eqref{weak truncated time dependent eq}. 
Then, we can see that the moment $M_{\gamma + \lambda}$ satisfies the following 
\begin{align*}
& \partial_{t}M_{\gamma +\lambda} \leq - \frac{2\lambda + \gamma -1 }{1-\gamma} M_{\gamma+\lambda}
+ \frac{(\gamma +\lambda)}{M_{\gamma+\lambda}}  M_{2(\gamma+\lambda)-1} \\
&+ c_{3}\int_{[\rho(2C_{1}), \frac{R}{4}] }
 \int_{[\rho(2C_{1}), \frac{R}{4} ] } (x^{\gamma + \lambda} y^{-\lambda} + x^{-\lambda} y^{\gamma + \lambda} ) [ (x+y)^{\gamma + \lambda} - x^{\gamma + \lambda} - y^{\gamma + \lambda}]  \Phi(t, \der x ) \Phi(t, \der y ) 
 \end{align*} 
since, due to the definition of $K_{R}$ in (\ref{truncated kernel}), we have that on the set $[\rho(2C_{1}),\frac{R}{4}]^{2}$, there exists a constant $C>0$ such that $K_{R}\geq\frac{K}{C}$. 

We denote by $z:=\frac{y}{x}$. Notice that since $\gamma + \lambda >0$ when $y \leq x $ we have that 
\begin{align*}
 \left[x^{\gamma + \lambda} y^{-\lambda} + x^{-\lambda} y^{\gamma + \lambda} \right] \left[ (x+y)^{\gamma + \lambda} - x^{\gamma + \lambda} - y^{\gamma + \lambda} \right]& \leq 2  x^{2(\gamma + \lambda)} y^{-\lambda} [ (1+z)^{\gamma + \lambda} - 1- z^{\gamma + \lambda}  ]\\
& \leq 2 x^{2(\gamma + \lambda)} y^{-\lambda} \left( \left(\gamma + \lambda \right)z- z^{\gamma + \lambda} \right)\\
&\leq 2 (\gamma + \lambda -1 ) y^{\gamma} x^{\gamma + \lambda}. 
\end{align*}
As a consequence, by symmetry, we deduce that 
 \begin{align} \label{epsilon appears}
&\partial_{t}M_{\gamma +\lambda}  \lesssim - \frac{2\lambda + \gamma -1 }{1-\gamma} M_{\gamma+\lambda}
+ \frac{(\gamma +\lambda)}{M_{\gamma+\lambda}}  M_{2(\gamma+\lambda)-1} - 2c_{3} \left(1-\gamma-\lambda \right)
 M_{\gamma } M_{\gamma+ \lambda}\nonumber\\
 &+2c_{3}(1-\gamma-\lambda)\int_{(\frac{R}{4},\infty)}x^{\gamma}\Phi(\der x)M_{\gamma+\lambda}+2c_{3}(1-\gamma-\lambda)\int_{(\frac{R}{4},\infty)}x^{\gamma+\lambda}\Phi(\der x)M_{\gamma}\nonumber\\
 &\leq - \frac{2\lambda + \gamma -1 }{1-\gamma} M_{\gamma+\lambda}
+ \frac{(\gamma +\lambda)}{M_{\gamma+\lambda}}  M_{2(\gamma+\lambda)-1} -2c_{3} \left(1-\gamma-\lambda \right)
 M_{\gamma } M_{\gamma+ \lambda}\nonumber\\
 &+4^{1-\gamma}R^{\gamma-1}2c_{3}(1-\gamma-\lambda)M_{\gamma+\lambda}+c_{4}M_{\gamma}\nonumber\\
 &= - \big[\frac{2\lambda + \gamma -1 }{1-\gamma}-\tilde{\varepsilon}\big]M_{\gamma+\lambda}
+ \frac{1}{M_{\gamma+\lambda}} [ (\gamma +\lambda) M_{2(\gamma+\lambda)-1} - 2c_{3} \left(1-\gamma-\lambda \right)
 M_{\gamma } M_{\gamma+ \lambda}^2\nonumber\\
 &+c_{4}M_{\gamma+\lambda}M_{\gamma}],
\end{align}
with $\tilde{\varepsilon}:=4^{1-\gamma}R^{\gamma-1}2c_{3}(1-\gamma-\lambda)$. Notice we can take $\tilde{R}$ sufficiently large so that $- \frac{2\lambda + \gamma -1 }{1-\gamma}+\tilde{\varepsilon}<0$, for every $R\geq \tilde{R}$.

Since $\gamma < 2(\gamma +\lambda ) -1 \leq 1 $ we deduce, by interpolation, using that $M_{1}(\Phi(t))=1$, for all $t\in[0,T]$, that there exists two positive constants $c_5$ and $c_6$ such that 
\[
M_{ 2(\gamma +\lambda ) -1 } \leq c_5 M_\gamma  + c_6. 
\]
Hence, since $\gamma + \lambda >0$ then 
\begin{align*}
\partial_{t} M_{\gamma +\lambda}& \leq- \big[\frac{2\lambda + \gamma -1 }{1-\gamma}-\tilde{\varepsilon}\big] M_{\gamma+\lambda}
+ \frac{1}{M_{\gamma+\lambda}} [ (\gamma +\lambda) \left[ c_5 M_\gamma  + c_6 \right]\\
&- 2c_3 \left(1-\gamma-\lambda \right)
 M_{\gamma } M_{\gamma+ \lambda}^2 +c_{4}M_{\gamma+\lambda}M_{\gamma}].
\end{align*}
Multiplying by $M_{\gamma +\lambda}$ the inequality we deduce that 
\begin{align*}
  M_{\gamma +\lambda}  \partial_{t} M_{\gamma +\lambda} \leq&- \big[\frac{2\lambda + \gamma -1 }{1-\gamma}-\tilde{\varepsilon}\big] M_{\gamma+\lambda}^2
+  [ (\gamma +\lambda) \left[ c_5 M_\gamma  + c_6 \right]\\
&- 2c_3 \left(1-\gamma-\lambda \right)
 M_{\gamma } M_{\gamma+ \lambda}^2+c_{4}M_{\gamma+\lambda}M_{\gamma}]
\end{align*}
which readjusting the constants implies 
\begin{align}\label{constant gamma lambda larger one}
&  \partial_{t} M_{\gamma +\lambda}^2 \leq- c_3 M_{\gamma+\lambda}^2
+  M_\gamma \left( c_4 +c_{5}M_{\gamma+\lambda}  - c_6 M_{\gamma+ \lambda}^2 \right) + c_7
\end{align}
for $c_3, c_4, c_5, c_6,c_{7}>0.$
This implies that the set $\left\{ \Phi \in \mathcal M_{+}(\mathbb R_*) : M_{\gamma +\lambda } \leq  \frac{ c_5+\sqrt{c_5^{2}+4c_{4}c_{6}}}{2 c_{6} } \right\} $ is invariant when $\gamma + 2 \lambda >1.$

We consider now the case $\gamma + 2 \lambda =1.$ 
First of all notice that for every $M < R $ we have 
\[
\int_{[M, \infty ) } x^{\gamma + \lambda} \Phi(t, \der x) \leq M^{\gamma+\lambda -1 } \int_{[M , \infty )} x \Phi(t, \der x ) \leq M^{\gamma + \lambda -1}.
\] 

Notice that an upper bound is obvious for the points $t\in[0,T]$ for which $M_{\gamma+\lambda}(\Phi(t))\leq 1$. 
If there exist $t_{1},t_{2}\in[0,T]$, $t_{1}<t_{2}$, such that  $M_{\gamma+\lambda}(\Phi(t_{1}))<1$ and $M_{\gamma+\lambda}(\Phi(t_{2}))>1$, by the continuity in time of $\Phi,$ we have that there exists $\overline{t}\in[t_{1},t_{2}]$ such that $M_{\gamma+\lambda}(\Phi(\overline{t}))=1$ and $\varepsilon_{1}\in(0,1)$ such that $M_{\gamma+\lambda}(\Phi(s))\geq 1$ on $[\overline{t},\overline{t}+\varepsilon_{1}]$.

On the interval $[\overline{t},\overline{t}+\varepsilon_{1}]$, we can apply the following logic.

We select $R$ large enough so that we can select $M$ to be such that $(1-\delta)^{\frac{1}{1-\gamma-\lambda}} < M < R$, but independent on $R$, and we deduce that there exists a $\delta >0$ such that
\begin{align*}
\int_{(0, M ) } x^{\gamma + \lambda}  \Phi(t,\der x)
&= \int_{\mathbb R_* } x^{\gamma + \lambda}\Phi(t, \der x) - \int_{[M, \infty ) } x^{\gamma + \lambda} \Phi(t, \der x) \\
&\geq \int_{\mathbb R_* } x^{\gamma + \lambda}\Phi(t, \der x)- M^{\gamma + \lambda -1 }\geq 1- M^{\gamma + \lambda -1 } \geq \delta. 
\end{align*}
As a consequence we deduce that 
\begin{equation} \label{gamma bound}
M_\gamma  \geq \int_{(0, M)} x^{\gamma} \Phi(t, \der x )  \geq M^{-\lambda} \int_{(0,M)} x^{\gamma+\lambda} \Phi(t, \der x)  \geq  \delta M^{- \lambda}. 
\end{equation}
Substituting the test function $\varphi(x)=x^{\gamma + \lambda} $ in equation \eqref{weak truncated time dependent eq},
we deduce that there exists a constant $c>0$ such that
\begin{align*}
&\partial_t M_{\gamma + \lambda } \leq \frac{\gamma + \lambda}{M_{\gamma+\lambda} } M_{2(\gamma + \lambda ) -1 } - c (1-\gamma -\lambda ) M_{\gamma +\lambda} M_\gamma+ \tilde{\epsilon}  M_{\gamma +\lambda} + c M_\gamma, 
\end{align*}
for $\tilde{\varepsilon}\in(0,1)$, which can be made sufficiently small as before. Hence, since for suitable constants $c_3, c_4 >0$ we have that $M_{2(\gamma+\lambda )-1} \leq c_3 M_\gamma + c_4$, similarly to (\ref{epsilon appears}), we deduce that
\begin{align*}
&\frac{1}{2}\partial_t M^2_{\gamma + \lambda } 
\leq c_5 M_{\gamma }   (\gamma + \lambda)  +(\gamma + \lambda) c_6 - c_7 M^2_{\gamma + \lambda } M_\gamma+\tilde{\varepsilon} M_{\gamma +\lambda}^{2}+c_{8} M_\gamma M_{\gamma+\lambda}.
\end{align*}
 Using \eqref{gamma bound} we deduce that there exists a constant $c_{9}>0$ such that
\begin{align*}
    &\frac{1}{2}\partial_t M^2_{\gamma + \lambda } 
\leq M_{\gamma }  \left[c_5   (\gamma + \lambda)  +(\gamma + \lambda) c_{9}  +c_{8}M_{\gamma+\lambda}-(c_7-\tilde{\varepsilon}\delta^{-1} M^{\lambda}) M^2_{\gamma + \lambda }  \right] .
\end{align*}
Choosing now $\tilde{\varepsilon}$ sufficiently small such that $-c_7+\tilde{\varepsilon}\delta^{-1} M^{\lambda}<0$, we have that there exists a constant $C_{1}>0$ such that 
\begin{align}\label{constant for gamma lambda}
\int_{\mathbb R_*} x^{\gamma + \lambda } \Phi(t, \der x ) \leq C_{1}. 
\end{align}

We now prove that $\left(S(t) \Phi_0 \right) ((0, \rho(C_1)))=0$. 
Notice that, by Lemma \ref{lem:auxiliary time dep truncated}, we know that $\left(S(t) \Phi_0 \right) ((0, \rho(2C_{1})))=0$, where we recall that $\rho( 2 C_1) \leq \rho(C_1)$. 
Consider now a sequence of test functions $ \{\varphi_n \}$ such that, for every $n\in\mathbb{N}$, $\varphi_{n}$ is decreasing and supported in $(0, \rho(C_1))$. Since $\varphi_n $ is decreasing, we deduce that the solution $F$ of equation \eqref{eq for F} satisfies 
\begin{align*} 
 \int_{\mathbb R_*} \varphi_n(x) F(t, \der x ) \leq \int_{\mathbb R_*} \varphi_n(x) \Phi_{0}(\der x )+ C(R)  \int_{0}^{t} \int_{\mathbb R_*} \varphi_n(x) F(s, \der x ) \der s, \quad \text{ for every } n \in \mathbb N, 
\end{align*} 
where we used in addition that $M_{1}(F)$ is bounded. Using Gr\"onwall’s inequality, we deduce that
\begin{align*}
    \int_{\mathbb R_*} \varphi_n(x) F(t, \der x ) =0,
\end{align*}
for every $t\in[0,T]$ and $n\in\mathbb{N}$.

We then let $\varphi_{n}$ converge pontwise to $\chi_{(0, \rho(C_1))}$ and obtain that
\begin{align*}
F(t, (0, \rho(C_1))) =0,
\end{align*}
for every $t\in[0,T]$.

Consider now a test function $\varphi$ such that $\varphi(x)=0$ for every $x \geq \rho(C_1)$ in equality \eqref{phi function of F}. Then 
\[
\int_{\mathbb R_* }\varphi(x) \Phi(t, \der x )= e^{\frac{1+\gamma}{1-\gamma}t} \int_{[\rho(C_1), \infty) }\varphi(X(t, y, \alpha) ) F(t, \der y). 
\] 
Using the fact that $X(t, y, \alpha) \geq \rho(C_1)$ for every $y \geq \rho(C_1)$, we deduce that 
$\Phi(t, (0, \rho(C_1)))=0$, for every $t \in [0, T]$. 

We conclude by proving (3). 
We consider a test function $\varphi $ equal to $x^{2-\gamma - \lambda} $ in equation \eqref{weak truncated time dependent eq} and deduce that
\begin{align} \label{eq weak form x 2-lambda-gamma} 
&\partial_t \int_{\mathbb{R}_{\ast}} \xi^{2-\gamma - \lambda }\Phi(t,\der \xi) - \frac{3-3\gamma- 2 \lambda}{1-\gamma}\int_{\mathbb{R}_{\ast}}\xi^{2-\gamma - \lambda }\Phi(t,\der \xi) 
- \frac{ 2-\gamma -\lambda }{M_{\gamma+\lambda}(\Phi(t)) } M_{1}(\Phi(t))  \\
&= \frac{1}{2}\int_{[ \rho(C_1), \infty)}\int_{[ \rho(C_1), \infty)} K_R(\xi,z) \left[(z+\xi)^{2-\gamma -\lambda }- \xi^{2-\gamma -\lambda}-z^{2-\gamma -\lambda} \right] \Phi(t,\der \xi)\Phi(t,\der z).\nonumber 
\end{align} 
By Cauchy-Schwarz inequality it follows that
\begin{equation}\label{eq following from CS}
\frac{1}{\int_{[ \rho(C_1), \infty)} \xi^{\gamma +\lambda} \Phi(t,\der \xi)} \leq 
\frac{\int_{[ \rho(C_1), \infty)}\xi^{2-\gamma - \lambda } \Phi(t,\der \xi)}{ \left( \int_{[ \rho(C_1), \infty)}\xi \Phi(t,\der \xi) \right)^2} = \int_{[ \rho(C_1), \infty)} \xi^{2-\gamma - \lambda } \Phi(t,\der \xi). 
\end{equation}
Plugging (\ref{eq following from CS}) into equation \eqref{eq weak form x 2-lambda-gamma} and using the fact that the total mass is equal to $1$ proved in \eqref{lower bound mass truncated}, we deduce that
\begin{align*}
&\partial_t \int_{[ \rho(C_1), \infty)} \xi^{2-\gamma - \lambda }\Phi(t,\der \xi) \leq \left( \frac{3\gamma-3+2\lambda}{1-\gamma} + 2-\gamma -\lambda\right) \int_{[ \rho(C_1), \infty)} \xi^{2-\gamma - \lambda }\Phi(t,\der \xi) \\
&+ \frac{1}{2}\int_{[ \rho(C_1), \infty)} \int_{[ \rho(C_1), \infty)}K_R(\xi,z) \left[(z+\xi)^{2-\gamma -\lambda }- \xi^{2-\gamma -\lambda}-z^{2-\gamma -\lambda} \right] \Phi(t,\der \xi)\Phi(t,\der z).
\end{align*} 

Hence, since $-1 < \gamma <1$ and $0 < \gamma + \lambda <1 $, we have
\begin{align*}
    \frac{3\gamma-3+2\lambda}{1-\gamma} + 2-\gamma -\lambda=- \frac{(\gamma+1)(1-\gamma-\lambda)}{1-\gamma } <0.
    \end{align*}
By symmetry, 
\begin{align*}
   &\frac{1}{2}\int_{[ \rho(C_1), \infty)} \int_{[ \rho(C_1), \infty)}K_R(\xi,z) \left[(z+\xi)^{2-\gamma -\lambda }- \xi^{2-\gamma -\lambda}-z^{2-\gamma -\lambda} \right] \Phi(t, \der\xi)\Phi(t,\der z) \\
   &\leq  \int_{[ \rho(C_1), \infty)} \int_{[\rho(C_1), z]} K_R(\xi,z) \left[(z+\xi)^{2-\gamma -\lambda }- \xi^{2-\gamma -\lambda}-z^{2-\gamma -\lambda} \right] \Phi(t,\der  \xi)\Phi(t,\der z).
\end{align*}
Assume $\xi\leq z$.  Denote $\eta:=\frac{\xi}{z}\in(0,1]$ and observe
\begin{align}\label{moments larger than one}
K_{R}(\xi,z)[(\xi+z)^{2-\gamma-\lambda}-\xi^{2-\gamma-\lambda}-z^{2-\gamma-\lambda}]\leq & K(\xi,z) z^{2-\gamma-\lambda}[(1+\eta)^{2-\gamma-\lambda}-1-\eta^{2-\gamma-\lambda}]\nonumber\\
\leq & C   K(\xi,z)z^{2-\gamma-\lambda}\eta \nonumber \nonumber\\
\leq& C z^{\gamma}(\eta^{\gamma+\lambda}+\eta^{-\lambda}) z^{2-\gamma-\lambda}\eta \leq 2C z^{2-\lambda}\eta^{1-\lambda}\nonumber \\
\leq & 4C (z\xi^{1-\lambda}+\xi z^{1-\lambda}).
\end{align}

Since $\rho(C_1)\leq \xi$ and $\rho(C_1)\leq z$, then $z^{1-\lambda}\leq\rho(C_1)^{-\lambda}z$. Hence 
\[
\frac{d }{dt } \int_{\mathbb R_*} \xi^{2-\gamma-\lambda} \Phi(t, \der \xi) \leq - c_3 \int_{\mathbb R_*} \xi^{2-\gamma-\lambda} \Phi(t, \der \xi) + c(\rho(C_1)),
\]
for suitable constants $c_3, c(\rho(C_1))>0$. Then, (3) follows.
\end{proof}

\begin{proposition}[Time-continuity of the semigroup]\label{prop:time cont} 
Assume $K_R $ to be a truncated kernel as in \eqref{truncated kernel} defined as a function of the homogeneous symmetric kernel $K$ that satisfies \eqref{kernel bounds},  \eqref{kernel cont},  for parameters $\gamma, \lambda \in \mathbb R$ such that \eqref{avoid_gelation_parameters} holds and such that $\gamma > -1$ and $\gamma + 2 \lambda \geq 1 $. 
Let $\Phi_0\in P$. Let $T>0$ be as in Theorem \ref{thm:time dep truncated}. 
The map $S(\cdot)\Phi_0:[0,T]\rightarrow  P$ is continuous in time, for every fixed $\Phi_{0}$, where $P$ was defined in (\ref{invariant region}).
\end{proposition}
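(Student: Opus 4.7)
The plan is to exploit the Lagrangian representation \eqref{phi function of F}, which expresses $\Phi(t,\cdot) = S(t)\Phi_0$ in terms of the characteristics $X(t,y,\alpha)$ and the fixed point $F \in C([0,T];\mathcal{M}_+(\mathbb{R}_*))$ constructed in Lemma \ref{lem:auxiliary time dep truncated}. Since every element of $P$ is supported in the compact set $[\rho(C_1), 4R]$, weak-$\ast$ continuity against $C_c(\mathbb{R}_*)$ test functions and continuity in the Wasserstein-$1$ metric are equivalent on $P$. Hence, it suffices to prove that for each $\varphi \in C^1_c(\mathbb{R}_*)$ (which is enough by density on the compact support) the map $t\mapsto \int_{\mathbb{R}_*}\varphi(x)\,\Phi(t,dx)$ is continuous on $[0,T]$.

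Fix $s,t \in [0,T]$. Using \eqref{phi function of F}, the difference $\int \varphi\,d\Phi(t)-\int\varphi\,d\Phi(s)$ splits into three natural pieces:
\begin{align*}
I_1 &= \bigl(e^{\frac{1+\gamma}{1-\gamma}t}-e^{\frac{1+\gamma}{1-\gamma}s}\bigr)\int_{\mathbb{R}_*}\varphi(X(t,y,\alpha))\,F(t,dy),\\
I_2 &= e^{\frac{1+\gamma}{1-\gamma}s}\int_{\mathbb{R}_*}\bigl[\varphi(X(t,y,\alpha))-\varphi(X(s,y,\alpha))\bigr]F(t,dy),\\
I_3 &= e^{\frac{1+\gamma}{1-\gamma}s}\int_{\mathbb{R}_*}\varphi(X(s,y,\alpha))\bigl[F(t,dy)-F(s,dy)\bigr].
\end{align*}
Each of these goes to $0$ as $|t-s|\to 0$: $I_1$ by continuity of the exponential together with the uniform bound $\|F(t,\cdot)\|_{TV}\leq 1+\|\Phi_0\|_{TV}$ from the definition of $X_T$; $I_2$ by the Lipschitz bound $\|\varphi\|_{\mathrm{Lip}}<\infty$ and the fact that $t\mapsto X(t,y,\alpha)$ is locally Lipschitz in $t$ uniformly for $y\in[\rho(k_2),4R]$, which is a direct consequence of the ODE \eqref{ODE} and the bound \eqref{bound alpha} on $\alpha$; and $I_3$ by the continuity of $F$ in the Wasserstein-$1$ metric on $[0,T]$ (inherited from the fixed point construction in $C([0,T];\mathcal{M}_+(\mathbb{R}_*))$), combined with the fact that $y\mapsto \varphi(X(s,y,\alpha))$ is Lipschitz on $[\rho(k_2),4R]$ via \eqref{bounds for the ODE}.

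The fact that $S(t)\Phi_0\in P$ for each $t\in[0,T]$ is exactly the content of Proposition \ref{prop:inv region}, so the map takes values in $P$ as claimed. The only mildly delicate point is the equivalence between the weak-$\ast$ and Wasserstein-$1$ topologies on the subset of measures supported in the common compact interval $[\rho(C_1),4R]$; this is standard once one notes that $P$ sits inside a weak-$\ast$ precompact, uniformly compactly supported family of measures with uniformly bounded total variation (since $\int x\,d\Phi=1$ and $x\geq\rho(C_1)$ on the support). No step presents a genuine obstacle: the construction of $F$ as a continuous-in-time curve in the Wasserstein metric and the smooth $t$-dependence of the characteristics $X(\cdot,y,\alpha)$ furnish everything needed.
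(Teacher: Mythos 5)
Your argument is correct, but it follows a genuinely different route from the paper. You go back to the Lagrangian representation \eqref{phi function of F} and decompose $\int\varphi\,d\Phi(t)-\int\varphi\,d\Phi(s)$ into the contribution of the exponential prefactor, the $t$-dependence of the characteristics $X(t,y,\alpha)$, and the time-continuity of the fixed point $F$ in the $W_1$ metric; each ingredient is indeed available (the TV bound on $F$ from the definition of $X_T$, boundedness of the drift $V$ on the invariant region $[\rho(k_2),4R]$ giving uniform Lipschitz dependence of $X$ on $t$, the spatial Lipschitz bound \eqref{bounds for the ODE}, and $F\in C([0,T];\mathcal M_+(\mathbb R_*))$ with the $W_1$-induced metric), so the three pieces vanish as $|t-s|\to 0$, and membership in $P$ is Proposition \ref{prop:inv region}, as you say. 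The paper instead argues directly from the weak formulation \eqref{weak truncated time dependent eq} integrated from $s$ to $t$: using the lower bound $M_{\gamma+\lambda}(\Phi(r))\geq 1/C_2$ and the uniform compact support of $\Phi(r,\cdot)$, every term on the right is bounded by a constant times $|t-s|$, yielding $\bigl|\int\varphi\,d(\Phi(t)-\Phi(s))\bigr|\leq C|t-s|$ at once. The paper's route is shorter and gives an explicit Lipschitz-in-time rate using only the already-established moment and support bounds, without invoking the construction of the solution; your route re-derives the continuity from the characteristics and the fixed point, which is more work but makes transparent where the time regularity comes from (and would also give a Lipschitz rate if one notes that $F$ is in fact Lipschitz in $W_1$ by \eqref{eq for F}). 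One small point worth making explicit in your step for $I_2$ is that the uniform-in-$y$ Lipschitz bound for $t\mapsto X(t,y,\alpha)$ uses not only \eqref{ODE} and \eqref{bound alpha} but also the invariance of the compact region $[\rho(k_2),4R]$ under the flow, which keeps the drift bounded along trajectories; this is established in the paper and is not a genuine gap.
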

\begin{proof}
Let $T>0$. We want to estimate the value of $|S(t) \Phi_0- S(s) \Phi_0|$, for $s,t\in[0,T]$. Assume without loss of generality that $s\leq t$. 
By the definition of the operator $S$ we know that
\begin{align*}
& \int_{\mathbb R_*} \varphi(x) [{\Phi}(t, \der x)-{\Phi}(s, \der x)]- \frac{1+\gamma}{1-\gamma} \int_{s}^{t}\int_{\mathbb R_*}  \varphi(x) \Phi(z, \der x)\der z  \\
&+ \frac{2}{1-\gamma} \int_{s}^{t}\int_{\mathbb R_*}  \varphi'(x) x \Phi(z, \der x)\der z 
 - \int_{s}^{t} \frac{1}{\int_{\mathbb R_*}x^{\gamma +\lambda}\Phi(z,\der x)} \int_{\mathbb R_*}  \varphi'(x) x^{\gamma + \lambda}\Phi(z, \der x)\der z \nonumber \\
& =\frac{1}{2}\int_{s}^{t}\int_{\mathbb R_*} \int_{\mathbb R_*} K_R(x,y) \left[ \varphi(x+y)- \varphi(x) - \varphi(y) \right] \Phi(z, \der x) \Phi(z, \der y)\der z. \nonumber 
\end{align*}
We have that there exists a constant $C_{2}>0$ such that $M_{\gamma+\lambda}(\Phi(r))\geq \frac{1}{C_{2}}$, for every $r\in[0,T]$.

As $\Phi$ has compact support, we can conclude that the exists a constant $C>0$, which may depend on $\rho(C_{1}),R,T, \gamma,\lambda$, but independent on $s,t$, such that 
\begin{align*}
    \bigg|& \int_{\mathbb R_*} \varphi(x) [{\Phi}(t, \der x)-{\Phi}(s, \der x)]\bigg|\leq C|t-s|,
\end{align*}
thus giving us the desired continuity of the semigroup.
\end{proof}

\begin{lemma}[Dual equation] \label{lem:dual eq} 
Assume $K_R$ to be a truncated kernel as in \eqref{truncated kernel} defined as a function of a homogeneous symmetric kernel $K$ that satisfies \eqref{kernel bounds},  \eqref{kernel cont},  for parameters $\gamma , \lambda \in \mathbb R $ satisfying \eqref{avoid_gelation_parameters} and with $\gamma > -1$ and $\gamma + 2 \lambda \geq 1 $. 
Let $\Phi_{1},\Phi_{2}\in P$ be two solutions of (\ref{weak truncated time dependent eq}) with initial conditions $\Phi_{\textup{in},1},\Phi_{\textup{in},2}\in P$, respectively, and $T>0$ be as in Theorem \ref{thm:time dep truncated}. Then there exists a unique solution $\varphi\in\textup{C}^{1}([0,T],\textup{C}^{1}_{c}([\rho(C_{1}),\infty))$, with $\varphi(T,\cdot)=\psi(\cdot)$, where $\psi$ is an arbitrary function in $\textup{C}^{1}_{c}([\rho(C_{1}),\infty))$, which solves the following equation: 
\begin{align} \label{eq:dual}  
    \partial_{t}\varphi(t, \xi)+ \frac{1+\gamma}{1-\gamma}   \varphi(t, \xi)  -
\frac{2\xi}{1-\gamma} \partial_\xi \varphi(t, \xi)  + \frac{   \xi^{\gamma + \lambda}}{M_{\gamma +\lambda}(\Phi_1)}\partial_\xi \varphi(t, \xi) +\mathbb{L}(\varphi)(t,\xi)=0,
\end{align}
where
\begin{align*}
     \mathbb{L}(\varphi)(t,\xi):=& -  \frac{\xi^{\gamma+\lambda} }{M_{\gamma+\lambda}(\Phi_1)  M_{\gamma+\lambda}(\Phi_2)  } \int_{\mathbb R_*} \partial_z \varphi(t, z) z^{\gamma+\lambda} \Phi_2(t, \der z)   \\ 
  & +  \frac{1}{2}\int_{\mathbb R_*} K_{R}(\xi,\eta)[\varphi(t,\xi+\eta)-\varphi(t,\xi)-\varphi(t,\eta)](\Phi_{1}(t, \der \eta)+\Phi_{2}(t, \der \eta)). 
\end{align*}
\end{lemma}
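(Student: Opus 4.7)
The plan is to solve (\ref{eq:dual}) backwards in time by combining the method of characteristics with a Banach fixed-point argument that absorbs the nonlocal operator $\mathbb{L}$ perturbatively.

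First I set up the characteristics. Let $\alpha(t):=M_{\gamma+\lambda}(\Phi_1(t))$. By Proposition \ref{prop:inv region} combined with the weak-$*$ time-continuity coming from Proposition \ref{prop:time cont}, $\alpha$ is continuous on $[0,T]$ and satisfies $1/C_2\le\alpha(t)\le C_1$. For $(t,\xi)\in[0,T]\times[\rho(C_1),\infty)$ I define the backward characteristic $Y(s;t,\xi)$ by
\[
\frac{d}{ds}Y(s;t,\xi)\;=\;-V\bigl(Y(s;t,\xi),\alpha(s)\bigr),\qquad Y(t;t,\xi)=\xi,
\]
with $V$ as in Lemma \ref{lem:characteristics}. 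The sign analysis of $V$ at its equilibrium $\rho(\alpha(s))$, together with the bound $\rho(\alpha(s))\ge\rho(C_1)$, ensures that $Y$ is defined globally on $[0,T]$, stays inside $[\rho(C_1),\infty)$, is $C^1$ in $\xi$, and obeys the Lipschitz/stability estimate (\ref{bounds for the ODE}).

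Next I rewrite (\ref{eq:dual}) as a Duhamel integral equation. Along the flow, $s\mapsto\varphi(s,Y(s;t,\xi))$ solves the linear ODE
\[
\tfrac{d}{ds}\varphi(s,Y(s;t,\xi))=-\tfrac{1+\gamma}{1-\gamma}\varphi(s,Y(s;t,\xi))-\mathbb{L}(\varphi)(s,Y(s;t,\xi)),
\]
with terminal datum $\psi$. Integrating from $t$ to $T$ yields
\[
\varphi(t,\xi)=e^{\frac{1+\gamma}{1-\gamma}(T-t)}\psi(Y(T;t,\xi))+\int_t^T e^{\frac{1+\gamma}{1-\gamma}(s-t)}\mathbb{L}(\varphi)(s,Y(s;t,\xi))\,ds\;=:\;\mathcal{G}[\varphi](t,\xi).
\]
A $C^1$ solution of this integral equation, differentiated along the flow, is exactly a classical solution of (\ref{eq:dual}) (and conversely), so it suffices to produce a unique fixed point of $\mathcal{G}$.

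For the fixed-point argument I pick a compact interval $I=[\rho(C_1),N]$ large enough to contain $\mathrm{supp}(\psi)$ and to be stable under the operations appearing in $\mathcal{G}$: backward characteristics only contract $I$ toward $\rho(\alpha(s))$, while the coagulation part of $\mathbb{L}$ shifts the relevant range of $\xi$ by at most $4R$, the truncation radius of $\Phi_1,\Phi_2$ (Theorem \ref{thm:time dep truncated}). Working in
\[
E:=\{\varphi\in C([0,T];C^1(I))\}\;\text{ with }\;\|\varphi\|_*:=\sup_{t\in[0,T]}\|\varphi(t,\cdot)\|_{C^1(I)},
\]
I estimate $\mathbb{L}$ using the uniform lower bound $1/C_2\le M_{\gamma+\lambda}(\Phi_i)\le C_1$, the boundedness of $K_R$, the compact support of $\Phi_i$, and the fact that on $I$ the weight $\xi^{\gamma+\lambda}$ is $C^1$. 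Together with the Lipschitz bound (\ref{bounds for the ODE}) applied to $Y$, this yields
\[
\|\mathcal{G}[\varphi_1]-\mathcal{G}[\varphi_2]\|_*\;\le\;C_*\,T\,e^{\frac{|1+\gamma|}{1-\gamma}T}\|\varphi_1-\varphi_2\|_*
\]
for a constant $C_*=C_*(R,C_1,C_2,\gamma,\lambda,N)$. For small $T$ this is a contraction; for arbitrary $T$ I iterate backward over time windows of length $\tau$ with $C_*\tau e^{\frac{|1+\gamma|}{1-\gamma}\tau}<1$. This gives the unique $\varphi\in E$; the $C^1$-in-$t$ regularity and the fact that it satisfies (\ref{eq:dual}) pointwise then follow by differentiating the Duhamel formula (which is legitimate because $Y$ is $C^1$ in $(t,\xi)$ and the integrand is continuous in $s$), and uniqueness is immediate from the contraction property.

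The main obstacle is the $C^1$-in-$\xi$ estimate on $\mathbb{L}(\varphi)$: because the first summand of $\mathbb{L}$ contains $\partial_z\varphi$ inside the integral against $\Phi_2$, controlling $\|\mathbb{L}(\varphi)\|_{C^1(I)}$ intrinsically requires $\|\varphi\|_{C^1}$, not just $\|\varphi\|_\infty$. Consequently the fixed-point iteration must be carried out in the $C^1$ norm, which in turn forces me to differentiate both $\psi(Y(T;t,\xi))$ and $\mathbb{L}(\varphi)(s,Y(s;t,\xi))$ in $\xi$ and in $t$ using the $C^1$ dependence of $Y$ on the initial data. Once this coupling is handled via the Lipschitz/$C^1$ estimates on the characteristics from Lemma \ref{lem:characteristics}, the remainder of the argument is routine linear theory.
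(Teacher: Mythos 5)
Your proposal follows essentially the same route as the paper: reduce \eqref{eq:dual} along the characteristics of the velocity field $V$ from Lemma \ref{lem:characteristics}, rewrite the problem as an integral (Duhamel/fixed-point) equation along the flow, run a Banach fixed-point argument in the norm $\sup_t\left(\|\varphi(t,\cdot)\|_\infty+\|\partial_\xi\varphi(t,\cdot)\|_\infty\right)$ — you correctly identify that the $C^1$ norm is forced by the $\partial_z\varphi$ inside $\mathbb{L}$ — and then iterate over small time windows, using that the contraction constant is independent of the terminal datum. The only cosmetic difference is that you integrate out the zeroth-order term $\frac{1+\gamma}{1-\gamma}\varphi$ via an exponential factor, while the paper keeps it inside the fixed-point operator; this changes nothing.

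Two concrete caveats. First, a sign slip: with your definition $\frac{d}{ds}Y(s;t,\xi)=-V(Y,\alpha(s))$, the chain rule gives $\frac{d}{ds}\varphi(s,Y)=\partial_t\varphi-V\partial_\xi\varphi$, which by \eqref{eq:dual} equals $-\frac{1+\gamma}{1-\gamma}\varphi-\mathbb{L}(\varphi)-2V\partial_\xi\varphi$, not the ODE you claim; the transport term cancels only along the forward flow $\frac{d}{ds}Y=+V(Y,\alpha(s))$, $Y(t;t,\xi)=\xi$, which is what the rest of your computation (and the paper) actually uses, so the sign should be fixed. Second, the compact-support issue is subtler than "the coagulation part shifts the range by at most $4R$": the first summand of $\mathbb{L}$ is proportional to $\xi^{\gamma+\lambda}$ and does not vanish for large $\xi$, so a solution in $C^1_c([\rho(C_1),\infty))$ for the unmodified $\mathbb{L}$ cannot exist; the paper handles this by proving the lemma for a cut-off operator $\overline{\mathbb{L}}$ that coincides with $\mathbb{L}$ on $[\rho(C_1),4R]$ and vanishes for $\xi\geq 8R$, which suffices for the application in Proposition \ref{prop:weak star cont}. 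Your restriction to a compact interval $I=[\rho(C_1),N]$ plays the same role, but to see that the fixed point actually closes on $I$ you should invoke that $K_R(\xi,\eta)=0$ whenever $\xi>R$ or $\eta>R$ (so the gain term only samples $\varphi$ on $[\rho(C_1),2R]$) and that the nonlocal first term only samples $\partial_z\varphi$ on the support of $\Phi_2$, i.e.\ on $[\rho(C_1),4R]$; with these remarks added, your argument is the paper's argument.
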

\begin{remark}
We prove the statement of the lemma for a modified operator  $\overline{\mathbb L}(\varphi)$ which preserves the continuity in the variable $\xi$, is equal to zero when $\xi\geq 8R$ and $\overline{\mathbb L}(\varphi)={\mathbb L}(\varphi)$ if $\xi\in[\rho(C_{1}),4R]$. Due to the support of $\Phi_{1}$ and $\Phi_{2}$, when proving the continuity of the map  $S(t)$ in the weak-$^\ast$ topology, it suffices to analyse the operator $\overline{\mathbb{L}}(\varphi)$. Thus, it is enough to prove the statement of the lemma only for the operator $\overline{\mathbb{L}}(\varphi)$. Notice that, for example, the operator $\mathbb{L}(\varphi)$ does not preserve compactness because of the presence of the term $\frac{\xi^{\gamma+\lambda} }{M_{\gamma+\lambda}(\Phi_1)  M_{\gamma+\lambda}(\Phi_2)  }\int_{\mathbb R_*} \partial_z \varphi(t, z) z^{\gamma+\lambda} \Phi_2(t, \der z)$. We keep the notation $\mathbb{L}(\varphi)$ for simplicity. We omit further details as the proof consists of standard methods used in the study of coagulation equations, see, for example, \cite{ferreira2019stationary}.
\end{remark}
\begin{proof}[Proof of Lemma \ref{lem:dual eq}]
First, we use the method of characteristics. 
We define $X(t, \xi)$ to be the solution of the ODE
\begin{align*}
   x'(t) &= -
\frac{2}{1-\gamma}x  + \frac{1}{\int_{\mathbb R_*}z^{\gamma +\lambda}\Phi_1(t,\der z)}  x^{\gamma + \lambda}
\end{align*} with initial condition $x(0)=\xi. $

In this way equation \eqref{eq:dual} can be rewritten in the following fixed point form: 
\begin{align} \label{eq:fixed point L}
  \varphi(t, X(t, \xi))  =   \mathcal L[\varphi]  (t, \xi),
  \end{align}
  where 
  \begin{align*} 
  \mathcal L [\varphi] (t, \xi) :=\varphi(T, X(T, \xi))  + \frac{1+\gamma}{1-\gamma}   \int_t^T  \varphi(s, X(s, \xi)) \der s 
+\int_t^T \mathbb{L}(\varphi)(s, X(s , \xi)) \der s. 
\end{align*}

Our strategy for proving the statement of the lemma is to apply Banach fixed point theorem. The operator $\mathcal L$ maps  ${C}^{1}([0,T],\textup{C}^{1}_{c}([\rho(C_{1}), \infty ) ))$ in itself. 

We prove that the operator $\mathcal L$ is a contraction if we endow $Y:= C([0,T], C^1_c([\rho(C_{1}), \infty ))$ with the norm $\| \varphi \|_Y := \sup_{ t \in [0, T] } \left(  \sup_{ x \in \mathbb R_*} |\varphi(t, x) |+ \sup_{x \in \mathbb R_*} | \partial_x \varphi(t, x) | \right) $. 

To this end we notice that
\begin{align*}
   \mathcal L [\varphi_1] (t, \xi) -  \mathcal L [\varphi_2 ] (t, \xi) &= \frac{1+\gamma}{1-\gamma}  \int_t^T  \left( \varphi_1 (s, X(s, \xi)) -  \varphi_2 (s, X(s, \xi)) \right) \der s \\
&+\int_t^T \left[ \mathbb{L}(\varphi_1)(s, X(s , \xi)) - \mathbb{L}(\varphi_2)(s, X(s , \xi)) \right]  \der s. 
\end{align*}
We notice that 
\begin{align*} 
&\int_t^T \left[ \mathbb{L}(\varphi_1)(s, X(s , \xi)) - \mathbb{L}(\varphi_2)(s, X(s , \xi)) \right]  \der s \\
&=  -\int_t^T  \frac{1}{M_{\gamma+\lambda}(\Phi_1)  M_{\gamma+\lambda}(\Phi_2)  } \int_{\mathbb R_*} \left(  \partial_z \varphi_1 (s, z) - \partial_z \varphi_2 (s, z) \right)z^{\gamma+\lambda} \Phi_2(s, \der z)    X(s, \xi)^{\gamma+\lambda} \der s  \\ 
  & +  \frac{1}{2}\int_t^T\int_{\mathbb R_*} K_{R}(X(s , \xi),\eta)[\varphi_1(s,X(s , \xi)+\eta)- \varphi_2(s,X(s , \xi)+\eta) -\varphi_1(s,X(s , \xi))\\
  &+\varphi_2(s,X(s , \xi)) -\varphi_1(s,\eta) + \varphi_2(s,\eta) ] (\Phi_{1}(s, \der \eta)+\Phi_{2}(s, \der \eta))\der s. 
\end{align*} 
From this we deduce that 
\[
\| \mathcal L [ \varphi_1] - \mathcal L [\varphi_2]  \|_Y \leq T c(\rho(C_{1}), R, \Phi_1, \Phi_2) \| \varphi_1 - \varphi_2 \|_Y
\] 
and hence $\mathcal L $ is a contraction for sufficiently small times $T.$
We can extend the solution to all possible times noting that the contraction constant $c(\rho(C_{1}),  \Phi_1, \Phi_2, R)$ does not depend on the final condition $\psi$. 
We thus deduce that there exists a solution $\varphi$ of the fixed point $\varphi= \mathcal L [\varphi].$
\end{proof}
We now prove that the found solution is Lipschitz continuous.
\begin{proposition}\label{dual function lipschitz}
Assume $K_R$ to be a truncated kernel as in \eqref{truncated kernel} defined as a function of a homogeneous symmetric kernel $K$ such that it satisfies \eqref{kernel bounds},  \eqref{kernel cont},  for parameters $\gamma , \lambda \in \mathbb R $ satisfying \eqref{avoid_gelation_parameters} and such that $\gamma > -1$ and $\gamma + 2 \lambda \geq 1 $. Let $T>0$ be as in Theorem \ref{thm:time dep truncated}. Let $\varphi\in \textup{C}^{1}([0,T],\textup{C}^{1}([\rho(C_1),8R]))$ with initial datum $\varphi(T,\cdot)$ be the function found in Lemma \ref{lem:dual eq}. Assume, in addition, that $\sup_{ \xi\in[\rho(C_1),8R]}|\varphi(T,\xi)|\leq 1$ and that $\varphi(T,\xi)$ is Lipschitz. Then $\varphi$ is Lipschitz continuous, in the sense that, for every $t\in[0,T],$ there exists $C(t)>0$ such that 
\begin{align*}
    \sup_{s\in[0,t]}|\varphi(s,\xi)-\varphi(s,\tilde{\xi})|\leq C(t)|\xi-\tilde{\xi}|,
\end{align*}
for every $\xi,\tilde{\xi}\in[\rho(C_1),8R]$. Moreover, $C(t)$ may depend on the norm of $\Phi_{1}$ and $\Phi_{2}$, but is otherwise independent of the choice of $\Phi_{1}$ and $\Phi_{2}$.
\end{proposition}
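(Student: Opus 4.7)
The plan is to derive a backward Grönwall inequality for the spatial Lipschitz constant
\begin{equation*}
L(s) := \sup\left\{\frac{|\varphi(s,\xi_1)-\varphi(s,\xi_2)|}{|\xi_1-\xi_2|} : \xi_1 \neq \xi_2,\ \xi_1,\xi_2 \in [\rho(C_1), 8R]\right\},
\end{equation*}
starting from the fixed point identity $\varphi(t, X(t, \xi)) = \mathcal L[\varphi](t, \xi)$ established in the proof of Lemma \ref{lem:dual eq}. Since the drift $V(x,\alpha(s)) = -\frac{2x}{1-\gamma} + \frac{x^{\gamma+\lambda}}{\alpha(s)}$ is $C^1$ with derivative bounded on $[\rho(C_1), 8R]$ (using $\alpha(s) = M_{\gamma+\lambda}(\Phi_1(s)) \in [1/C_2, C_1]$), standard ODE theory yields the bilateral estimate
\begin{equation*}
e^{-C_\star T}|\xi_1 - \xi_2| \leq |X(s, \xi_1) - X(s, \xi_2)| \leq e^{C_\star T}|\xi_1-\xi_2|
\end{equation*}
for every $s \in [0, T]$ and $\xi_1, \xi_2 \in [\rho(C_1), 8R]$, with $C_\star$ depending only on $T, R, C_1, C_2, \gamma, \lambda$. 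I would then use this bound together with the assumed Lipschitz continuity of $\psi$ to control the terminal term $\varphi(T, X(T, \xi))$ by $\mathrm{Lip}(\psi)\, e^{C_\star T}|\xi_1 - \xi_2|$ and the zeroth-order integral $\frac{1+\gamma}{1-\gamma}\int_t^T \varphi(s, X(s, \xi))\, ds$ by $\frac{|1+\gamma|}{1-\gamma}\, e^{C_\star T} \int_t^T L(s)\, ds \cdot |\xi_1 - \xi_2|$.

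The heart of the argument is controlling $\int_t^T |\mathbb L(\varphi)(s, X(s, \xi_1)) - \mathbb L(\varphi)(s, X(s, \xi_2))|\, ds$. In the nonlocal prefactor $-\frac{X^{\gamma+\lambda}}{M_{\gamma+\lambda}(\Phi_1) M_{\gamma+\lambda}(\Phi_2)}\int \partial_z\varphi(s,z)\, z^{\gamma+\lambda} \Phi_2(s,\der z)$, the $\xi$-dependence enters only via $X^{\gamma+\lambda}$, which is Lipschitz on $[\rho(C_1), 8R]$ since $\rho(C_1) > 0$; the accompanying integral factor is bounded by $L(s)\, M_{\gamma+\lambda}(\Phi_2) \leq C_1\, L(s)$ thanks to $\Phi_2 \in P$ and the compact support of $\Phi_2$ inside $[\rho(C_1), 4R]$. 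For the coagulation piece, the differentiability of $K_R$ together with the compact support of $\Phi_1 + \Phi_2$ and the elementary bound $|\varphi(s, X_1 + \eta) - \varphi(s, X_2 + \eta) - \varphi(s, X_1) + \varphi(s, X_2)| \leq 2\, L(s)\, |X_1 - X_2|$ yield a contribution of the form $C\, L(s)\, |\xi_1 - \xi_2|$, with $C$ depending on $R$, $\|K_R\|_{C^1}$, $C_1$ and $C_2$ but not on $\Phi_1, \Phi_2$ beyond these bounds. Assembling the three pieces and using the lower estimate on $|X(t, \xi_1) - X(t, \xi_2)|$ to revert from image variables to $\xi$-variables leads to
\begin{equation*}
L(t) \leq e^{2 C_\star T}\left[\mathrm{Lip}(\psi) + \widetilde C \int_t^T L(s)\, ds\right],
\end{equation*}
and applying Grönwall backward in time gives the claimed uniform bound $L(s) \leq C(t)$ for $s \in [0, t]$.

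The main obstacle I anticipate is verifying that the derived estimate really controls $\varphi(s, \cdot)$ on the entire interval $[\rho(C_1), 8R]$ rather than only on the image $X(s, [\rho(C_1), 8R])$ of the characteristic flow. This has to be handled by exploiting the compact support of $\varphi(s, \cdot)$ inherited from the terminal datum together with the truncation built into the modified operator $\overline{\mathbb L}$ of the preceding remark, which ensures that $\varphi(s, \cdot)$ stays compactly supported in a fixed subset of $[\rho(C_1), 8R]$ throughout $[0,T]$. A secondary, more cosmetic subtlety is the apparent circular dependence: $\partial_z \varphi$ appears inside $\mathbb L$, but since this derivative is integrated against $\Phi_2$ whose support lies in $[\rho(C_1), 4R]$, the relevant supremum of $|\partial_z \varphi|$ is exactly the quantity $L(s)$ on the left-hand side, so the Grönwall loop closes once we treat $L(\cdot)$ as a single unknown in the integral inequality.
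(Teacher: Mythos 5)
Your proposal follows essentially the same route as the paper's proof: the fixed-point representation of $\varphi$ along the characteristics of the drift, the bi-Lipschitz estimates for the flow $X(t,\cdot)$ on $[\rho(C_1),8R]$ (the paper's $L_1(t)|\xi-\tilde{\xi}|\leq |X(t,\xi)-X(t,\tilde{\xi})|\leq L_2(t)|\xi-\tilde{\xi}|$), the $C^1$ bound on $K_R$ combined with the moment bounds $\frac{1}{C_2}\leq M_{\gamma+\lambda}(\Phi_i)\leq C_1$ for the nonlocal term, and a backward Gr\"onwall argument. The only difference is organizational: you close a single Gr\"onwall loop directly on the Lipschitz seminorm $L(s)$ (which also controls $\sup_z|\partial_z\varphi(s,z)|$ and, via the compact support, $\|\varphi(s,\cdot)\|_\infty$), whereas the paper first extracts uniform bounds on $\sup|\varphi|$ and on $\sup_z|\partial_z\varphi|$ from the fixed-point equation and then applies Gr\"onwall to the differences; both variants are sound, and both handle the passage from estimates along the image of the characteristic flow back to the whole interval $[\rho(C_1),8R]$ with the same (fixable) brevity that you yourself flag.
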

\begin{proof}
Notice first that, since $\sup_{ \xi\in[\rho(C_1),8R]}|\varphi(T,\xi)|\leq 1$, there exists a constant $C>0$, which depends on the norm of $\Phi_{1}$ and $\Phi_{2}$ and the parameters  $\rho(C_{1})$, $R$ and $t$, such that $\sup_{s\in[0,t],\xi\in [\rho(C_1),8R]}$ $|\varphi(s,\xi)|\leq C$. This can be seen by looking at (\ref{eq:fixed point L}). We will use Gr\"{o}nwall's inequality in (\ref{eq:fixed point L}) in order to prove that $\varphi$ is Lipschitz.
\begin{align*}
 |\varphi(t,X(t,\xi))-\varphi(t,X(t,\tilde{\xi}))|&\leq |\varphi(T,X(T,\xi))-\varphi(T,X(T,\tilde{\xi}))|\\
 &+\frac{1+\gamma}{1-\gamma}   \int_t^T  |\varphi(s, X(s, \xi))-\varphi(s,X(s,\tilde{\xi}))| \der s\\
&+\int_{t}^{T}|\mathbb L [\varphi] (s, X(s,\xi))-\mathbb L [\varphi] (s, X(s,\tilde{\xi}))|\der s.
\end{align*}

In order to bound the term
\begin{align*}
    \int_{t}^{T}|\mathbb L [\varphi] (s, X(s,\xi))-\mathbb L [\varphi] (s, X(s,\tilde{\xi}))|\der s,
\end{align*}
we need to estimate
\begin{align*}
    I_{1}=& \bigg|\frac{1}{2}\int_{\mathbb R_*} K_{R}(X(t,\xi),\eta)[\varphi(t,X(t,\xi)+\eta)-\varphi(t,X(t,\xi))-\varphi(t,\eta)](\Phi_{1}(t, \der \eta)+\Phi_{2}(t, \der \eta)) 
    \\&-\frac{1}{2}\int_{\mathbb R_*} K_{R}(X(t,\tilde{\xi}),\eta)[\varphi(t,X(t,\tilde{\xi})+\eta)-\varphi(t,X(t,\tilde{\xi}))-\varphi(t,\eta)](\Phi_{1}(t, \der \eta)+\Phi_{2}(t, \der \eta))\bigg|
\end{align*}
and
\begin{align*}
  I_{2}= \frac{|X(t,\xi)^{\gamma+\lambda}-X(t,\tilde{\xi})^{\gamma+\lambda}|}{M_{\gamma+\lambda}(\Phi_1)  M_{\gamma+\lambda}(\Phi_2)  } \int_{\mathbb R_*} |\partial_z \varphi(t, z)| z^{\gamma+\lambda} \Phi_2(t, \der z).
   \end{align*}
   For $I_{1},$ by the definition of
    $K_{R}$ in \eqref{truncated kernel}, we have $K_{R}$ is $C^{1}$ and we can assume it has compact support as we are only interested in the region $[\rho(C_{1}),8R]^{2}$. Thus,  we have that the first derivative of $K_{R}$ is bounded from above. Moreover, there exist constants $L_{1}(t), L_{2}(t)>0$ such that $L_{1}(t)|\xi-\tilde{\xi}|\leq |X(t,\xi)-X(t,\tilde{\xi})|\leq L_{2}(t)|\xi-\tilde{\xi}|$.
   
For $I_{2},$ we use
   \begin{align*}
   & \frac{1}{M_{\gamma+\lambda}(\Phi_1)  M_{\gamma+\lambda}(\Phi_2)  }\int_{\mathbb R_*} |\partial_z \varphi(t, z)| z^{\gamma+\lambda} \Phi_2(t, \der z)\leq \frac{C_{1}}{(C_{2})^{2}}\sup_{z\in[\rho(C_{1}),8R]}|\partial_z \varphi(t, z)|.
\end{align*}
If we prove that there exists a constant $C>0,$ which can depend on $\rho(C_{1}),R,$ and the norms of $\Phi_{1},$ and $\Phi_{2},$ but does not vary depending on the choice of $\Phi_{1},\Phi_{2}$ such that \begin{align}\label{bound for derivative in semigroup}
    \sup_{z\in[\rho(C_{1}),8R]}|\partial_z \varphi(t, z)|\leq C,
\end{align}
then there exists a constant $C>0,$ which can depend on $\rho(C_{1}),$ $R$ and the norms of $\Phi_{1}$ and $\Phi_{2}$ such that 
\begin{align}\label{last step semigroup}
    |\varphi(t,X(t,\xi))-\varphi(t,X(t,\tilde{\xi}))|&\leq C |X(t,\xi)-X(t,\tilde{\xi})|\nonumber\\
    &+ C\int_{t}^{T}    |\varphi(s,X(s,\xi))-\varphi(s,X(s,\tilde{\xi}))|.
\end{align}
Let us now prove (\ref{bound for derivative in semigroup}). From equation (\ref{eq:fixed point L}) and the fact that $K_{R}$ is in $C^{1}$ with compact support, we obtain an upper bound for $|\partial_{2}\varphi(t,X(t,\xi))|$.

We then use that there exist some constants $\overline{c}(t), \overline{\overline{c}}(t)>0$ such that $\overline{c}(t)\xi\leq  X(t,\xi)\leq \overline{\overline{c}}(t)\xi$, with $\xi\in[\rho(C_{1}),8R]$ in order to obtain the desired bound for $\sup_{z\in[\rho(C_{1}),8R]}|\partial_z \varphi(t, z)|$.

We use Gr\"{o}nwall's inequality in (\ref{last step semigroup}) and obtain that $|\varphi(t,X(t,\xi))-\varphi(t,X(t,\tilde{\xi}))|\leq C(t) |X(t,\xi)-X(t,\tilde{\xi})|.$ Thus, we can conclude that also $|\varphi(t,\xi)-\varphi(t,\tilde{\xi})|\leq C(t) |\xi-\tilde{\xi}|$ since $X(t,\xi)$ is Lipschitz continuous in the $\xi$ variable.
\end{proof}

\begin{proposition}[Continuity of the semigroup in the weak topology] \label{prop:weak star cont} 
Assume $K_R$ to be a truncated kernel as in \eqref{truncated kernel} defined as a function of a homogeneous symmetric kernel $K$ such that it satisfies \eqref{kernel bounds},  \eqref{kernel cont},  for parameters $\gamma , \lambda \in \mathbb R $ satisfying \eqref{avoid_gelation_parameters} and such that $\gamma > -1$ and $\gamma + 2 \lambda \geq 1 $. 
For every time $t>0$ the map 
\[
 S(t):P\rightarrow P
\] 
is continuous in the weak-$^{\ast}$ topology, where $P$ was defined in (\ref{invariant region}).
\end{proposition}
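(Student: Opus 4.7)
The plan is to exploit the dual equation from Lemma \ref{lem:dual eq} and the Lipschitz regularity of its solution from Proposition \ref{dual function lipschitz} to transfer weak-$^{\ast}$ convergence from the initial data to time $t$. Let $\{\Phi_{\textup{in},n}\}_{n} \subset P$ satisfy $\Phi_{\textup{in},n} \rightharpoonup^{\ast} \Phi_{\textup{in}} \in P$, and set $\Phi_{1}(s,\cdot) := S(s)\Phi_{\textup{in},n}$ and $\Phi_{2}(s,\cdot) := S(s)\Phi_{\textup{in}}$. By the definition of $P$, both $\Phi_{1}(s,\cdot)$ and $\Phi_{2}(s,\cdot)$ are supported in the compact interval $[\rho(C_{1}),4R]$ with total mass bounded uniformly, so weak-$^{\ast}$ convergence on $P$ is metrized by $W_{1}$. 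By iterating it suffices to prove continuity on the interval $[0,T]$ where $T$ is as in Theorem \ref{thm:time dep truncated}, so I fix $t=T$.

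Next, for each $\psi \in C^{1}_{c}([\rho(C_{1}),8R])$ with $\|\psi\|_{\infty} \leq 1$ and $\psi$ Lipschitz, Lemma \ref{lem:dual eq} provides a unique solution $\varphi$ of the dual equation \eqref{eq:dual} on $[0,T]$ with $\varphi(T,\cdot)=\psi$, and Proposition \ref{dual function lipschitz} gives a Lipschitz constant for $\varphi(0,\cdot)$ that depends only on $\rho(C_{1}), R, T$ and the uniform bounds of measures in $P$, but not on the specific choice of $\Phi_{1},\Phi_{2}$ or $\psi$. Setting $\mu := \Phi_{1} - \Phi_{2}$, the duality machinery is built precisely so that
\begin{equation*}
\frac{d}{ds}\int_{\mathbb{R}_{\ast}} \varphi(s,x)\, \mu(s,\der x) = 0,
\end{equation*}
which, integrated from $0$ to $T$, yields the identity
\begin{equation*}
\int_{\mathbb{R}_{\ast}} \psi(x)\, \bigl[S(T)\Phi_{\textup{in},n} - S(T)\Phi_{\textup{in}}\bigr](\der x) = \int_{\mathbb{R}_{\ast}} \varphi(0,x)\, \bigl[\Phi_{\textup{in},n} - \Phi_{\textup{in}}\bigr](\der x).
\end{equation*}
Since $\varphi(0,\cdot)$ is continuous on the common compact support of the measures and $\Phi_{\textup{in},n} \rightharpoonup^{\ast} \Phi_{\textup{in}}$, the right-hand side tends to $0$ as $n\to\infty$. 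Taking the supremum over test functions $\psi$ with $\|\psi\|_{\textup{Lip}} \leq 1$ and using the uniform Lipschitz control on $\varphi(0,\cdot)$ gives $W_{1}(S(T)\Phi_{\textup{in},n},S(T)\Phi_{\textup{in}}) \to 0$, which is the desired weak-$^{\ast}$ continuity.

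The main technical obstacle is verifying the duality identity $\frac{d}{ds}\int \varphi\, \der \mu = 0$. Using the primal weak formulation \eqref{weak truncated time dependent eq} for $\Phi_{1}$ and $\Phi_{2}$ and subtracting, the quadratic coagulation term becomes linear in $\mu$ via the symmetrization $\Phi_{1}\otimes\Phi_{1} - \Phi_{2}\otimes\Phi_{2} = \mu\otimes\Phi_{1} + \Phi_{2}\otimes\mu$, producing the factor $\tfrac{1}{2}(\Phi_{1}+\Phi_{2})$ appearing in the definition of $\mathbb{L}(\varphi)$. Meanwhile, the difference of the non-local transport terms splits as
\begin{equation*}
\frac{1}{M_{\gamma+\lambda}(\Phi_{1})}\int \varphi'\, x^{\gamma+\lambda}\mu(\der x) - \frac{M_{\gamma+\lambda}(\mu)}{M_{\gamma+\lambda}(\Phi_{1})M_{\gamma+\lambda}(\Phi_{2})}\int \varphi'\, x^{\gamma+\lambda}\Phi_{2}(\der x),
\end{equation*}
where the second, purely non-local contribution is exactly cancelled by the first term of $\mathbb{L}(\varphi)$ when integrated against $\mu$. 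The bookkeeping is delicate because each term of the dual equation must correspond to the adjoint of the corresponding linearized primal term; however, once the cancellation is identified, integrating the identity $\partial_{s}\varphi + (\text{primal linearization})^{\ast}\varphi = 0$ against $\mu$ and combining with $\partial_{s}\mu = (\text{primal linearization})\mu$ yields the vanishing derivative. The compactness of the supports of $\Phi_{1}$ and $\Phi_{2}$ guarantees all integrals are finite and justifies the use of $\varphi$ (which has compact support in $[\rho(C_{1}),8R]$, slightly larger than the support of the measures) as a test function.
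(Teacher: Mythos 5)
Your overall strategy is the same as the paper's: test the difference $\mu=\Phi_1-\Phi_2$ against the solution $\varphi$ of the dual equation from Lemma \ref{lem:dual eq}, use the cancellation to reduce everything to the initial data, and invoke the Lipschitz bound of Proposition \ref{dual function lipschitz}. Your bookkeeping for the linearization is also consistent with the paper: the splitting of the nonlocal transport term and the symmetrization $\Phi_1\otimes\Phi_1-\Phi_2\otimes\Phi_2=\mu\otimes\Phi_1+\Phi_2\otimes\mu$ are exactly what produce the operator $\mathbb L(\varphi)$, and the reduction to $t=T$ by iterating the semigroup is fine.

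There is, however, a genuine gap in the limiting step. You write that the right-hand side $\int\varphi(0,x)\,[\Phi_{\textup{in},n}-\Phi_{\textup{in}}](\der x)$ tends to $0$ ``since $\varphi(0,\cdot)$ is continuous and $\Phi_{\textup{in},n}\rightharpoonup^{\ast}\Phi_{\textup{in}}$''. But $\varphi$ is not a fixed test function: the dual equation \eqref{eq:dual} has coefficients built from $\Phi_1=S(\cdot)\Phi_{\textup{in},n}$ and $\Phi_2$ (through $M_{\gamma+\lambda}(\Phi_i)$ and the integrals against $\Phi_1,\Phi_2$ in $\mathbb L$), so $\varphi(0,\cdot)=\varphi_n(0,\cdot)$ changes with $n$, and weak-$^\ast$ convergence tested against an $n$-dependent sequence of functions does not follow automatically. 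This is precisely the difficulty the paper's proof is organized around: it uses the uniform sup and Lipschitz bounds of Proposition \ref{dual function lipschitz} (uniform over the admissible $\Phi_1,\Phi_2$ and over $\psi$ with $\|\psi\|_{\textup{Lip}}\le1$) to place all possible $\varphi(0,\cdot)$ in a single equi-Lipschitz, equibounded family $\mathcal K_{\rho(C_1),R}$, which is totally bounded in $C([\rho(C_1),8R])$, and then concludes by a finite $\delta$-net argument, approximating $\varphi_n(0,\cdot)$ by finitely many fixed functions $\chi_i$ and controlling the error by the uniformly bounded masses. Your argument can be repaired within your own framework — since all measures in $P$ are supported in the fixed compact $[\rho(C_1),4R]$ with uniformly bounded mass, weak-$^\ast$ convergence implies $W_1(\Phi_{\textup{in},n},\Phi_{\textup{in}})\to0$, and then
\[
\Big|\int\varphi_n(0,x)\,[\Phi_{\textup{in},n}-\Phi_{\textup{in}}](\der x)\Big|\le \|\varphi_n(0,\cdot)\|_{\textup{Lip}}\, W_1(\Phi_{\textup{in},n},\Phi_{\textup{in}})\le C\,W_1(\Phi_{\textup{in},n},\Phi_{\textup{in}})\to0,
\]
using the $n$-independent constant $C$ — but this use of the uniform Lipschitz control is needed already at the level of a single final datum $\psi$, not merely ``when taking the supremum over $\psi$'' as your write-up suggests. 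With that correction your proof coincides in substance with the paper's.
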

\begin{proof}
Let $\delta>0$. In order to prove continuity in the weak-$^{\ast}$ topology of the semigroup and because of the support of $\Phi_{1}$ and $\Phi_{2}$, it is enough to prove that, if  for every $\psi \in C_c([\rho(C_{1}),\infty))$, with $||\psi||_{\infty}\leq 1$, we have that 
\begin{align}\label{sufficiently small}
    \int_{\mathbb R_*} \psi (x) \left( \Phi_{\textup{in}, 1 } (\der x) - \Phi_{\textup{in}, 2} ( \der x ) \right) \text{ is sufficiently small,}
\end{align}
 then we have that for every $\psi \in C_c([\rho(C_{1}),\infty))$, with $||\psi||_{\infty}\leq 1,$
\begin{align*}
\int_{\mathbb R_*} \psi (x) \left( \Phi_1 (t, \der x) - \Phi_2(t, \der x ) \right) \leq \delta,
\end{align*}
where $\Phi_1 $ and $\Phi_2 $ are the solutions of equation \eqref{weak truncated time dependent eq} with initial conditions $\Phi_{\textup{in}, 1} $ and $\Phi_{\textup{in}, 2} $, respectively. 
To this end we notice that since $\Phi_1$ and $\Phi_2 $ satisfy equation \eqref{weak truncated time dependent eq} then 
\begin{align*}
&\int_{\mathbb R_*} \varphi(t,x) [{\Phi_{1}}(t, \der x)-{\Phi_{2}}(t, \der x)]-\int_{\mathbb R_*} \varphi(0,x) [{\Phi_{\textup{in},1}}( \der x)-{\Phi_{\textup{in},2}}( \der x)] \\
& -\int_{0}^{t}\int_{\mathbb R_*} \partial_{s}\varphi(s,x) [{\Phi_{1}}(s, \der x)-{\Phi_{2}}(s, \der x)]\der s
- \frac{1+\gamma}{1-\gamma} \int_{0}^{t}\int_{\mathbb R_*}  \varphi(s,x) [{\Phi_{1}}(s, \der x)-{\Phi_{2}}(s, \der x)]\der s \\ & +\frac{2}{1-\gamma}\int_{0}^{t} \int_{\mathbb R_*}  \partial_x \varphi(s,x) x [{\Phi_{1}}(s, \der x)-{\Phi_{2}}(s, \der x)]\der s \\
& - \int_{0}^{t}\frac{1}{\int_{\mathbb{R}_{\ast}}z^{\gamma+\lambda}\Phi_{1}(s,\der z)}  \int_{\mathbb R_*}  \partial_x \varphi(s, x) x^{\gamma + \lambda}\Phi_{1}(s, \der x)\der s\\
&+\int_{0}^{t}\frac{1}{\int_{\mathbb{R}_{\ast}}z^{\gamma+\lambda}\Phi_{2}(s,\der z)} \int_{\mathbb R_*}   \partial_x \varphi(s, x)x^{\gamma + \lambda}\Phi_{2}(s, \der x)\der s  \\
& =\frac{1}{2}\int_{0}^{t}\int_{\mathbb R_*} \int_{\mathbb R_*} K_R(x,y) \chi_{\varphi}(s,x,y) [{\Phi_{1}}(s, \der y)+{\Phi_{2}}(s, \der y)][{\Phi_{1}}(s, \der x)-{\Phi_{2}}(s, \der x)]\der s,
\end{align*} 
where $\chi_{\varphi}(s,x,y):=\varphi(s,x+y)-\varphi(s,x)-\varphi(s,y)$, for every $\varphi\in C([0, t], C_c^1(\mathbb R_*)).$
To simplify the computations, we adopt the notation $\tilde{\Phi}:= \Phi_1 - \Phi_2 $ and we notice that $\tilde{\Phi} $ satisfies the following equation for every $\varphi \in C^1([0, t], C_c^1(\mathbb R_*))$
\begin{align*}
&\int_{\mathbb R_*} \varphi(t,x) \tilde{\Phi}(t, \der x)-\int_{\mathbb R_*} \varphi(0,x) [{\Phi_{\textup{in},1}}( \der x)-{\Phi_{\textup{in},2}}( \der x)] \\
& -\int_{0}^{t}\int_{\mathbb R_*} \partial_{s}\varphi(s,x) \tilde{\Phi}(s, \der x)\der s
- \frac{1+\gamma}{1-\gamma} \int_{0}^{t}\int_{\mathbb R_*}  \varphi(s,x) \tilde{\Phi}(s, \der x)\der s
\\ &+\frac{2}{1-\gamma} \int_{0}^{t}\int_{\mathbb R_*}  \partial_x \varphi(s,x) x\tilde{\Phi}(s, \der x)\der s  - \int_{0}^{t}\frac{1}{\int_{\mathbb R_*}z^{\gamma +\lambda}\Phi_{1}(s,\der z)}  \int_{\mathbb R_*}  \partial_x \varphi(s, x) x^{\gamma + \lambda}\tilde{\Phi}(s, \der x)\der s \\ &+\int_{0}^{t}\int_{\mathbb R_*} \mathbb L[\varphi](s, x) \tilde{\Phi} (s, \der x)\der s.
\end{align*} 
First we make the following notation:
\begin{align*}
    C_{norm}:=\sup_{s\in[0,t]}\int_{\mathbb{R}_{\ast}}[\Phi_{1}+\Phi_{2}](s,\der x)<\infty.
\end{align*}
Notice that $C_{norm}$ can be bounded from above by 
\begin{align*}
    C_{norm}=\sup_{s\in[0,t]}\int_{\mathbb{R}_{\ast}}[\Phi_{1}+\Phi_{2}](s,\der x)\leq C(R,\rho(C_{1}),t) \int_{\mathbb{R}_{\ast}}[\Phi_{\textup{in},1}+\Phi_{\textup{in},2}](\der x).
\end{align*}
Let now $\varphi$ be the solution found in Lemma \ref{lem:dual eq} with coagulation kernel $K_{R}$. By Proposition \ref{dual function lipschitz}, we know that there exists a constant $C(t,R,\Phi_{1},\Phi_{2})$, that depends only on the norm of $\Phi_{1}$ and $\Phi_{2}$, but is otherwise independent of the choice of $\Phi_{1}$ and $\Phi_{2}$, such that $|\varphi(s,\xi)-\varphi(s,\tilde{\xi})|\leq C(t,R,\Phi_{1},\Phi_{2})|\xi-\tilde{\xi}|,$ for every $\xi,\tilde{\xi}\in[\rho(C_{1}),8R]$ and $s\in[0,t]$. We thus look at the set:
\begin{align*}
    \mathcal{K}_{\rho(C_{1}),R}:=&\{\chi\in\textup{C}([\rho(C_{1}),8R])| |\chi(\xi)-\chi(\tilde{\xi})|\leq C(t,R,\Phi_{1},\Phi_{2})|\xi-\tilde{\xi}|,\\
    &\textup{ for all } \xi, \tilde{\xi}\in[\rho(C_{1}),8R]\}\subset \textup{C}([\rho(C_{1}),8R]).
\end{align*}
The set $ \mathcal{K}_{\rho(C_{1}),R}$ is totally bounded, thus there exist $N\in\mathbb{N}$ and $\chi_{1},\ldots,\chi_{N}\in  \mathcal{K}_{\rho(C_{1}),R}$ such that $ \mathcal{K}_{\rho(C_{1}),R}\subseteq \cup_{i=1}^{N}B(\chi_{i}, \frac{\delta}{4N C_{norm}})$.

Then we obtain that:
\begin{align}
\int_{\mathbb R_*} \varphi(t,x) \tilde{\Phi}(t, \der x)&=\int_{\mathbb R_*} \varphi(0,x) [{\Phi_{\textup{in},1}}(\der x)-{\Phi_{\textup{in},2}}( \der x)] =:T_{1}. \label{conclusion}
\end{align} 
We can bound $T_{1}$ by
\begin{align*}
 T_{1}&\leq \min_{i=1}^{N}\int_{\mathbb R_*} |[\varphi(0,x)-\chi_{i}(x) ][{\Phi_{\textup{in},1}}(\der x)-{\Phi_{\textup{in},2}}( \der x)] | \\
 &+\max_{i=1}^{N}\big(\int_{\mathbb R_*} \chi_{i}(x) [{\Phi_{\textup{in},1}}(\der x)-{\Phi_{\textup{in},2}}( \der x)]\big)\\
 &\leq\frac{\delta}{4C_{norm}}\int_{\mathbb R_*}  [{\Phi_{\textup{in},1}}(\der x)+{\Phi_{\textup{in},2}}( \der x)]  + \max_{i=1}^{N}\int_{\mathbb R_*} \chi_{i}(x) [{\Phi_{\textup{in},1}}(\der x)-{\Phi_{\textup{in},2}}( \der x)]\leq \delta,
\end{align*}
where in the last step we used (\ref{sufficiently small}).
\end{proof}
\begin{proof}[Proof of Theorem \ref{thm:stationary truncated}]
By Proposition \ref{prop:weak star cont} we know that the operator $\Phi \mapsto S(t) \Phi$ is continuous in the  weak-$^{\ast}$ topology. 
Additionally, thanks to Proposition \ref{prop:inv region}, we know that $P$ is an invariant region for $S(t)$. 
Since $P$ is also convex and compact and since we have proven that the map $t \mapsto S(t)$ is continuous,  we apply Theorem 1.2 in \cite{escobedo2005self} to deduce that there exists a $\Phi $ such that $S(t)\Phi=\Phi$.  
\end{proof}

\subsection{Existence of the self-similar profile} \label{sec:existence self-similar profile}
To keep the notation lighter, in the previous sections we denoted with $\Phi$ the solution of the truncated problem. In this section, since we pass to the limit as $R$ tends to infinity, we add the label $R$ to $\Phi$ and we will denote with $\Phi$ the self-similar profile.  
For simplicity, we denote by $\rho:=\rho(C_{1}),$ where $\rho(C_{1})$ was defined in (\ref{delta1formula}). 
\begin{proof}[Proof of Theorem \ref{thm:existence}]
For every $R>1$, sufficiently large, there exist $T>0$ and $\Phi_{R}\in  C^1([0,T];\mathcal M_{+}(\mathbb R_*))$ such that
\begin{align*}
& \int_{\mathbb R_*} \varphi(x) \dot{\Phi}_{R}(t, \der x)- \frac{1+\gamma}{1-\gamma} \int_{\mathbb R_*}  \varphi(x) \Phi_{R}(t, \der x) +
\frac{2}{1-\gamma} \int_{\mathbb R_*}  \varphi'(x) x \Phi_{R}(t, \der x) \\
& - \frac{1}{M_{\gamma+\lambda}(\Phi_{R}(t))}  \int_{\mathbb R_*}  \varphi'(x) x^{\gamma + \lambda}\Phi_{R}(t, \der x)  \\
& =\frac{1}{2}\int_{\mathbb R_*} \int_{\mathbb R_*} K_R(x,y) \left[ \varphi(x+y)- \varphi(x) - \varphi(y) \right] \Phi_{R}(t, \der x) \Phi_{R}(t, \der y), 
\end{align*} 
where $K_{R}$ is the coagulation kernel defined in (\ref{truncated kernel}). Notice that the bounds in Proposition \ref{prop:inv region} are independent of $R>1$. 

Thanks to Theorem \ref{thm:stationary truncated} we know that a measure $\Phi_R \in \mathcal M_+(\mathbb R_*) $ satisfying equation \eqref{weak truncated steady state eq} and satisfying the bounds  
\eqref{uniform bounds self-sol} exists.
We notice also that, since $\gamma + \lambda>0$,
\begin{align}\label{everybound}
    \int_{(0,\infty)}\Phi_{R}(\der x)=\int_{[\rho,\infty)}\Phi_{R}(\der x)\leq \rho^{-\gamma-\lambda}\int_{[\rho,\infty)}x^{\gamma+\lambda}\Phi_{R}(\der x)\leq \rho^{-\gamma-\lambda}C_{1}.
\end{align}
Hence, Banach-Alaoglu Theorem implies that there exists $\Phi$ such that 
\begin{align}\label{convergence_fixed_points}
\Phi_R \rightharpoonup \Phi \text{ as } R \rightarrow \infty
\end{align}
in the weak-$^{\ast}$ topology.

We now prove that the measure $\Phi $ in (\ref{convergence_fixed_points}) satisfies equation \eqref{weak steady state eq} by taking the limit as $R \rightarrow \infty$. Fix $\varphi\in C_{c}(\mathbb{R}_{\ast})$.
We start with passing to the limit in the coagulation term.

Let $\varepsilon\in(0,1)$. We first show that 
\begin{align}\label{first step convergence kernel}
   \bigg| \int_{[\rho,\infty)}\int_{[\rho,\infty)}\left(K (\xi,z)- K_{R}(\xi,z) \right)\Phi_{R}(\der \xi)\Phi_{R}(\der z)[\varphi(z+\xi)-\varphi(z)-\varphi(\xi)] \bigg|\leq \varepsilon,
\end{align}
for sufficiently large $R$. We then prove that 
\begin{align}\label{second step convergence kernel}
    \int_{[\rho,\infty)}\int_{[\rho,\infty)} K(\xi,z) \Phi_{R}(\der \xi)\Phi_{R}(\der z)[\varphi(z+\xi)-\varphi(z)-\varphi(\xi)] 
\end{align}
converges to
\begin{align}\label{second step convergence kernel 2}
    \int_{[\rho,\infty)}\int_{[\rho,\infty)}K (\xi,z)\Phi(\der \xi)\Phi(\der z)[\varphi(z+\xi)-\varphi(z)-\varphi(\xi)] 
\end{align}
as $R \rightarrow \infty .$

For (\ref{first step convergence kernel}), we have that: 
\begin{align*}
   &\bigg| \int_{[\rho,\infty)}\int_{[\rho,\infty)}\left(K (\xi,z)- K_{R}(\xi,z) \right)\Phi_{R}(\der \xi)\Phi_{R}(\der z)[\varphi(z+\xi)-\varphi(z)-\varphi(\xi)] \bigg|  \\
   &\leq \int_{[\rho,\frac{R}{4}]}\int_{[\rho,\frac{R}{4}]}\bigg|\left(K (\xi,z)- K_{R}(\xi,z) \right)\Phi_{R}(\der \xi)\Phi_{R}(\der z)[\varphi(z+\xi)-\varphi(z)-\varphi(\xi)] \bigg| \\
   &+ 2\int_{(\frac{R}{4},\infty)}\int_{[\rho,\infty)}\bigg|\left(K (\xi,z)- K_{R}(\xi,z) \right)\Phi_{R}(\der \xi)\Phi_{R}(\der z)[\varphi(z+\xi)-\varphi(z)-\varphi(\xi)] \bigg|\\
   &=T_{1}+T_{2}.
\end{align*}
For the first term, we have  
\begin{align*}
    T_{1}\leq e^{-R}3||\varphi||_{\infty}\rho^{-2},
\end{align*}
where we used the definition of $K_{R}$ in \eqref{truncated kernel} and the fact that the total mass of the measures is equal to $1$.

For the region $(\frac{R}{4}, \infty)\times[\rho, \infty)$, we use $K_R \leq K$ to prove that 
\begin{align*}
    T_{2}&\leq 12||\varphi||_{\infty}
        \int_{(\frac{R}{4},\infty)}\int_{[\rho,\infty)} K(\xi,z) \Phi_{R}(\der z)\Phi_{R}(\der \xi)\\     & \leq c \left(  R^{-\gamma-2\lambda}M_{\gamma+\lambda}^{2}(\Phi_{R}) +R^{\gamma+\lambda-1}\rho^{-\gamma-2\lambda}  M_{\gamma+\lambda}(\Phi_{R}) \right),
\end{align*}
which gives a small contribution due to the uniform estimates for $\Phi_{R}$ if we make $R$ sufficiently large.

We now analyse (\ref{second step convergence kernel}). We consider a continuous function $g: \mathbb R_+ \rightarrow [0, 1] $ such that $g(x)=1$, when $x\leq 1$, and $g(x)=0$, when $x \geq 2$. 
We define the function $p_M $ as  
\begin{align} \label{p_M}
p_{M}(x,y)=g\left(\frac{x}{M}\right)g\left(\frac{y}{M}\right),
\end{align}
where $M$ is a positive constant.
By the construction of $p_{M}$ and given the fact that $\Phi_{R}$ is supported in the region $[\rho,\infty)$, for every $R>1$, we have that, given any function $\varphi \in C_c(\mathbb R_*)$, 
\begin{align*}
    &\int_{[\rho,\infty)}\int_{[\rho,\infty)}K(\xi,z)p_{M}(\xi,z)\Phi_{R}(\der \xi)\Phi_{R}(\der z)[\varphi(z+\xi)-\varphi(z)-\varphi(\xi)]=\\
     &\int_{[\rho,2M]}\int_{[\rho,2M]}K(\xi,z)p_{M}(\xi,z)\Phi_{R}(\der \xi)\Phi_{R}(\der z)[\varphi(z+\xi)-\varphi(z)-\varphi(\xi)].
\end{align*}
Therefore, since $\Phi_R \rightharpoonup \Phi $ in the weak-$^{\ast}$ topology, we can conclude that $\Phi_R \Phi_{R}\rightharpoonup \Phi \Phi $ in the weak-$^{\ast}$ topology as $R\rightarrow\infty$ if we work on the set $[\rho,2M]^{2}$ and we deduce that 
\begin{align*}
    \int_{[\rho,\infty)}\int_{[\rho,\infty)}K(\xi,z)p_{M}(\xi,z)\Phi_{R}(\der \xi)\Phi_{R}(\der z)[\varphi(z+\xi)-\varphi(z)-\varphi(\xi)]
\end{align*}
converges to
\begin{align*}
    \int_{[\rho,\infty)}\int_{[\rho,\infty)}K(\xi,z)p_{M}(\xi,z)\Phi(\der \xi)\Phi(\der z)[\varphi(z+\xi)-\varphi(z)-\varphi(\xi)]
\end{align*}
as $R $ tends to infinity. 

To conclude that, for every test function $\varphi \in C_c(\mathbb R_*), $ we have that (\ref{second step convergence kernel}) converges to (\ref{second step convergence kernel 2}) as $R  \rightarrow \infty$, we have to prove that the reminder terms, namely
\begin{equation}\label{reminder1}
\int_{[\rho,\infty)}\int_{[M,\infty)}K(\xi,z)[\varphi(z+\xi)-\varphi(z)-\varphi(\xi)]\Phi_{R}(\der \xi)\Phi_{R}(\der z),
\end{equation}
\begin{equation} \label{reminder2}
\int_{[M,\infty)}\int_{[\rho,\infty)}K(\xi, z) [\varphi(z+\xi)-\varphi(z)-\varphi(\xi)]\Phi_{R}(\der \xi)\Phi_{R}(\der z),
\end{equation}
\begin{equation}\label{reminder3}
\int_{[\rho,\infty)}\int_{[M,\infty)}K(\xi,z)[\varphi(z+\xi)-\varphi(z)-\varphi(\xi)]\Phi(\der \xi)\Phi(\der z),
\end{equation}
\begin{equation} \label{reminder4}
\int_{[M,\infty)}\int_{[\rho,\infty)}K(\xi, z) [\varphi(z+\xi)-\varphi(z)-\varphi(\xi)]\Phi(\der \xi)\Phi(\der z),
\end{equation}
tend to zero as $M\rightarrow\infty$.

Let us look at the term \eqref{reminder1}. In this case we are in the region where $\{\xi\geq M\}$, hence $\xi^{\gamma+\lambda-1}\leq M^{\gamma+\lambda-1}$ as $\gamma+\lambda<1$. Therefore, for every $\varphi\in\textup{C}_{c}(\mathbb{R}_{>0})$ 
\begin{align*}
\int_{[\rho,\infty)}\int_{[M,\infty)}\xi^{\gamma+\lambda}z^{-\lambda}\Phi_{R}(\der \xi)\Phi_{R}(\der z)|\varphi(z+\xi)-\varphi(z)-\varphi(\xi)|\lesssim M^{\gamma+\lambda-1}\rho ^{-\gamma-2\lambda} M_{\gamma+\lambda}(\Phi_{R}),
\end{align*}
where we remind that
 $M_{\gamma+\lambda}(\Phi_{R})$ is bounded uniformly in $R>1$ and that the mass of $\Phi_R$ is equal to one. 
 Similarly, the fact that $\xi\geq M$ implies that $\xi^{-\gamma-2\lambda}\leq M^{-\gamma-2\lambda}$ since $\gamma+2\lambda> 0$. 
 We then obtain that
\begin{align*}
\int_{[\rho,\infty)}\int_{[M,\infty)}z^{\gamma+\lambda}\xi^{-\lambda}\Phi_{R}(\der \xi)\Phi_{R}(\der z)|\varphi(z+\xi)-\varphi(z)-\varphi(\xi)|\leq  C M^{-\gamma-2\lambda}M_{\gamma+\lambda}^{2}(\Phi_{R}). 
\end{align*}
From these two inequalities and the fact that $\gamma + \lambda <1 $ and $\gamma + 2 \lambda >0$ we deduce that the term \eqref{reminder1} tends to zero as $M \rightarrow \infty.$
By a symmetric argument we prove that the term \eqref{reminder2} tends to zero as $M\rightarrow\infty$.
The fact that the two terms \eqref{reminder3} and \eqref{reminder4} tend to zero as $M \rightarrow \infty$ follows similarly by the fact that the $\gamma + \lambda $ moment of $\Phi$ is bounded. 

The linear terms for the self-similar profiles of the truncated problem will converge as $R\rightarrow\infty$ to the desired terms thanks to (\ref{convergence_fixed_points}).

To conclude, we need to prove the convergence of the remainder term. We thus have to prove that
\begin{align*}
 \frac{1}{\int_{[\rho,\infty)}x^{\gamma+\lambda}\Phi_{R}(\der x)}\int_{[\rho,\infty)}\partial_{x}\varphi(x)x^{\gamma+\lambda}\Phi_{R}(\der x)
   \end{align*}
   goes to 
   \begin{align*}
 \frac{1}{\int_{[\rho,\infty)}x^{\gamma+\lambda}\Phi(\der x)}\int_{[\rho,\infty)}\partial_{x}\varphi(x)x^{\gamma+\lambda}\Phi(\der x)
   \end{align*}
 as $R\rightarrow\infty$. We have that
   \begin{align*}
   \bigg| &\frac{1}{M_{\gamma+\lambda}(\Phi_{R})}\int_{[\rho,\infty)}\partial_{x}\varphi(x)x^{\gamma+\lambda}\Phi_{R}(\der x)-\frac{1}{M_{\gamma+\lambda}(\Phi)}\int_{[\rho,\infty)}\partial_{x}\varphi(x)x^{\gamma+\lambda}\Phi(\der x)\bigg|\\
   &\leq    \bigg| \frac{1}{\int_{[\rho,\infty)}x^{\gamma+\lambda}\Phi_{R}(\der x)}\bigg|\bigg|\int_{[\rho,\infty)}\partial_{x}\varphi(x)x^{\gamma+\lambda}\Phi_{R}(\der x)-\int_{[\rho,\infty)}\partial_{x}\varphi(x)x^{\gamma+\lambda}\Phi(\der x)\bigg|\\
   &+   \bigg| \frac{1}{\int_{[\rho,\infty)}x^{\gamma+\lambda}\Phi_{R}(\der x)}-\frac{1}{\int_{[\rho,\infty)}x^{\gamma+\lambda}\Phi(\der x)}\bigg|\bigg|\int_{[\rho,\infty)}\partial_{x}\varphi(x)x^{\gamma+\lambda}\Phi(\der x)\bigg|.
\end{align*}
We can analyse the two terms separately. For the first term, we know by \eqref{eq following from CS} that the $M_{\gamma+\lambda}(\Phi_{R})$ moment is uniformly bounded from below and thus
\begin{align*}
 \bigg|& \frac{1}{\int_{[\rho,\infty)}x^{\gamma+\lambda}\Phi_{R}(\der x)}\bigg|\bigg|\int_{[\rho,\infty)}\partial_{x}\varphi(x)x^{\gamma+\lambda}\Phi_{R}(\der x)-\int_{[\rho,\infty)}\partial_{x}\varphi(x)x^{\gamma+\lambda}\Phi(\der x)\bigg| \\
&\leq M_{2-\gamma-\lambda}(\Phi_{R})\bigg|\int_{[\rho,\infty)} \partial_{x}\varphi(x)x^{\gamma+\lambda}\Phi_{R}(\der x)-\int_{[\rho,\infty)}\partial_{x}\varphi(x)x^{\gamma+\lambda}\Phi(\der x)\bigg|\\
&\leq C_{2}\bigg|\int_{[\rho,\infty)} \partial_{x}\varphi(x)x^{\gamma+\lambda}\Phi_{R}(\der x)-\int_{[\rho,\infty)}\partial_{x}\varphi(x)x^{\gamma+\lambda}\Phi(\der x)\bigg|
\end{align*}
and then we use that $\bigg|\int_{[\rho,\infty)}\partial_{x}\varphi(x)x^{\gamma+\lambda}\Phi_{R}(\der x)-\int_{[\rho,\infty)} \partial_{x}\varphi(x)x^{\gamma+\lambda}\Phi(\der x)\bigg|\rightarrow 0$ by (\ref{convergence_fixed_points}).

For the second term, we split the integral into a region with compact support and the remainder regions. More precisely,
\begin{align*}
      \bigg| &\frac{\int_{[\rho,\infty)}x^{\gamma+\lambda}\Phi(\der x)-\int_{[\rho,\infty)}x^{\gamma+\lambda}\Phi_{R}(\der x)}{\int_{[\rho,\infty)}x^{\gamma+\lambda}\Phi_{R}(\der x)\int_{[\rho,\infty)}x^{\gamma+\lambda}\Phi(\der x)}\bigg|\bigg|\int_{[\rho,\infty)}\partial_{x}\varphi(x)x^{\gamma+\lambda}\Phi(\der x)\bigg|\\
     &\leq C\bigg|\int_{[\rho,\infty)}x^{\gamma+\lambda}\Phi(\der x)-\int_{[\rho,\infty)}x^{\gamma+\lambda}\Phi_{R}(\der x)\bigg|M_{2-\gamma-\lambda}(\Phi_{R})M_{2-\gamma-\lambda}(\Phi)M_{\gamma+\lambda}(\Phi)\\
     &\lesssim C\bigg|\int_{[\rho,\infty)}g\left(\frac{x}{M}\right)\left[x^{\gamma+\lambda}\Phi(\der x)-x^{\gamma+\lambda}\Phi_{R}(\der x)\right]\bigg|+2C\bigg|\int_{[M, \infty)}x^{\gamma+\lambda}\Phi(\der x)\bigg|\\
      &+2C\bigg|\int_{[M, \infty)}x^{\gamma+\lambda}\Phi_{R}(\der x)\bigg|.
\end{align*}
We have that $\bigg|\int_{[\rho, \infty)}g(\frac{x}{M})[x^{\gamma+\lambda}\Phi(\der x)-x^{\gamma+\lambda}\Phi_{R}(\der x)]\bigg|\rightarrow 0$ as $R\rightarrow\infty$ by (\ref{convergence_fixed_points}). For the regions close to infinity, we use
\begin{align*}
    \bigg|\int_{[M,\infty)}x^{\gamma+\lambda}\Phi_{R}(\der x)\bigg|\leq M^{\gamma+\lambda-1},
\end{align*}
as the total mass is uniformly bounded. Since $\gamma+\lambda<1,$ the remaining term goes to zero as $M\rightarrow\infty$.

As a last step we prove that, if $\Phi((0, \delta))=0$ for some $\delta >0$, then $\Phi((0, \rho(M_{\gamma+\lambda})))=0$ for $\rho(M_{\gamma+\lambda})$ given by \eqref{rhoM}. 
Since the proof is standard we only sketch it here. 
Consider $x \in [\frac{\delta}{2},\frac{1}{2}\rho(M_{\gamma+\lambda}))$, then, since $\Phi$ satisfies \eqref{eq: ss strong}, we have
\begin{align*}
c \Phi(x) & \leq \left( \frac{x^{\gamma+\lambda} }{M_{\gamma+\lambda}} - \frac{2}{1-\gamma} x \right) \Phi(x)\\
& \leq \frac{|1+\gamma |}{1-\gamma} \int_\delta^x \Phi(\eta ) \der \eta + \int_{\delta}^x  \int_\delta^{y-\delta} K(y-\eta,\eta) \Phi(y-\eta ) \Phi(\eta ) \der \eta \der y \\
&\leq \tilde{c} \int_\delta^x \Phi(\eta) \der \eta,
\end{align*}
for some positive constants $\tilde{c}, c >0$.
Using Gr\"onwall's lemma, this implies that $\Phi(x) \leq e^{\frac{\tilde{c}}{c} x }\Phi(\frac{\delta}{2}) =0$. Hence $\Phi([\delta, \frac{1}{2}\rho(M_{\gamma+\lambda})))=0$.
Iterating this argument, we deduce that $\Phi((0, \rho(M_{\gamma+\lambda} )))=0$.
The argument can be made rigorous by working with the weak formulation of equation \eqref{eq: ss strong}.
\end{proof}

\subsection{Properties of the constructed self-similar profile}
In this section we aim at proving Theorem \ref{thm:existence and properties}. In particular we prove that the self-similar profile, whose existence has been proven in Theorem \ref{thm:existence}, satisfies the properties stated in Theorem \ref{thm:existence and properties}. 
In other words we have to prove the following theorem. 
\begin{theorem}\label{thm:properties of the constructed s.p.}
Assume $K$ to be a homogeneous symmetric coagulation kernel, of homogeneity $\gamma$, satisfying
\eqref{kernel bounds},  \eqref{kernel cont},  with $\gamma , \lambda $ such that  \eqref{avoid_gelation_parameters} holds and such that
\[
-1 < \gamma, \quad \gamma+2\lambda \geq  1.
\] 
Let $\Phi$ be the self-similar profile constructed in Theorem \ref{thm:existence}. 
Then 
\[
\int_{\mathbb R_*} e^{Lx } \Phi(\der x) < \infty 
\] 
for some $L>0$ and $\Phi$ is absolutely continuous with respect to the Lebesgue measure. 
Then $\Phi(\der x )=\phi(x) \der x  $ and $\phi$ is such that 
\[
\limsup_{x  \rightarrow \infty} e^{ \overline{L} x} \phi(x) < \infty
\]
for a constant $\overline{L}>0$. 
\end{theorem}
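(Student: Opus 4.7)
The plan is to prove three successive properties: (i) existence of a positive exponential moment $\int e^{Lx}\Phi(\der x)<\infty$; (ii) absolute continuity of $\Phi$ with respect to Lebesgue measure on $(\rho(M_{\gamma+\lambda}),\infty)$; and (iii) the pointwise exponential decay of the resulting density $\phi$.

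For (i), I would test the weak formulation \eqref{weak steady state eq} against suitable compactly supported approximations of $\varphi(x)=x^n$, using that $\Phi$ vanishes on $(0,\rho(M_{\gamma+\lambda}))$ and has finite $M_{\gamma+\lambda}$ and $M_{2-\gamma-\lambda}$ from Theorem \ref{thm:existence}. This yields the moment identity
\begin{equation*}
\frac{2n-1-\gamma}{1-\gamma}M_n = \frac{n\,M_{n+\gamma+\lambda-1}}{M_{\gamma+\lambda}} + \frac{1}{2}\iint K(x,y)\bigl[(x+y)^n-x^n-y^n\bigr]\Phi(\der x)\Phi(\der y).
\end{equation*}
Since $\gamma+\lambda\in(0,1)$, log-convexity of the moments combined with Young's inequality absorbs a fraction of the transport term $\tfrac{n}{M_{\gamma+\lambda}}M_{n+\gamma+\lambda-1}$ into the left-hand side, leaving a damping coefficient of order $n$. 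On the right, the binomial expansion $(x+y)^n-x^n-y^n=\sum_{j=1}^{n-1}\binom{n}{j}x^jy^{n-j}$ together with the kernel bound \eqref{kernel bounds} reduces the coagulation contribution to a weighted convolution-type sum involving products $M_{j+\gamma+\lambda}M_{n-j-\lambda}$, each of which is controllable by $M_j$, $M_{j+1}$, $M_{n-j}$ via log-convexity and the support condition $\supp\Phi\subset[\rho(M_{\gamma+\lambda}),\infty)$. A standard induction then yields a bound of the form $M_n\le A B^n n!\,P(n)$ for some polynomial $P$, which implies $\int e^{Lx}\Phi(\der x)<\infty$ for every $L<1/B$.

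For (ii), the key observation is that the velocity $V(\xi):=\tfrac{2\xi}{1-\gamma}-\tfrac{\xi^{\gamma+\lambda}}{M_{\gamma+\lambda}}$ is continuous and strictly positive on $(\rho(M_{\gamma+\lambda}),\infty)$. The weak stationary equation is then equivalent, in the distributional sense on $(\rho(M_{\gamma+\lambda}),\infty)$, to $-\partial_\xi[V(\xi)\Phi]=\tfrac{1+\gamma}{1-\gamma}\Phi+\mathbb K[\Phi]$. Integrating from $\rho(M_{\gamma+\lambda})$ to $\xi$ expresses $V(\xi)\Phi$ (in the cumulative sense) as a Volterra-type quantity driven by the loss density $a(\xi)\Phi(\der\xi)$, with $a(\xi):=\int K(\xi,y)\Phi(\der y)$ continuous thanks to (i), and by the gain measure $\tfrac12\iint_{x+y\le\cdot}K(x,y)\Phi(\der x)\Phi(\der y)$. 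A bootstrap argument in the spirit of \cite{ferreira2021self}---first ruling out atoms of $\Phi$ by showing that any atom at $x_0>\rho(M_{\gamma+\lambda})$ would propagate through the gain without a matching singular contribution balancing the left-hand side, and then improving to absolute continuity---yields $\Phi(\der x)=\phi(x)\,\der x$ with $\phi$ locally bounded on $(\rho(M_{\gamma+\lambda}),\infty)$.

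For (iii), once $\Phi$ has the density $\phi$, the profile equation becomes the first-order ODE
\begin{equation*}
V(\xi)\phi'(\xi)+\Bigl[V'(\xi)+a(\xi)-\tfrac{1+\gamma}{1-\gamma}\Bigr]\phi(\xi) = \tfrac{1}{2}\int_0^\xi K(\xi-y,y)\phi(\xi-y)\phi(y)\,\der y
\end{equation*}
on $(\rho(M_{\gamma+\lambda}),\infty)$. For large $\xi$ the linear part contracts super-linearly because $V(\xi)\sim\tfrac{2\xi}{1-\gamma}$, while the convolution source, multiplied by $e^{\bar L\xi}$ with $\bar L<L$, is controlled by $\bigl(\int e^{\bar L x}\phi(x)\,\der x\bigr)^2$, finite by (i). A Grönwall argument along characteristics of $V$ then yields $\limsup_{\xi\to\infty}\phi(\xi)e^{\bar L\xi}<\infty$. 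The hardest step is (ii): passing from a measure solution to a bounded density requires a careful bootstrap exploiting only the positivity of $V$, the continuity of $K$, and the exponential moment from (i), and is where essentially all of the technical work lies.
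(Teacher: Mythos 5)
Your step (ii) is essentially the paper's Proposition \ref{prop:regularity} (positivity of the velocity away from $\rho(M_{\gamma+\lambda})$, flux form of the equation, exclusion of an atom because the gain term would propagate it), so that part is fine. The genuine gap is in step (i). The paper does \emph{not} obtain the exponential moment by a factorial induction on $M_n$; it first proves finiteness of all power moments (Proposition \ref{prop:moment bounds}) and then runs the Fournier--Lauren\c{c}ot argument of Lemma \ref{lemma:expo bound}, i.e.\ a differential inequality in $L$ for $\Psi_a(L)$ obtained via Jensen's inequality, which only uses $\gamma+\lambda<1$ and $-\lambda<1$. Your route must instead close the induction on the identity
\begin{equation*}
\frac{2n-1-\gamma}{1-\gamma}M_n=\frac{n}{M_{\gamma+\lambda}}M_{n+\gamma+\lambda-1}+\frac12\iint K(x,y)\bigl[(x+y)^n-x^n-y^n\bigr]\Phi(\der x)\Phi(\der y),
\end{equation*}
and the crux is the binomial sum $\sum_{j=1}^{n-1}\binom{n}{j}M_{j+\gamma+\lambda}M_{n-j-\lambda}$. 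Under the hypothesis $M_j\le AB^jj!\,P(j)$, \emph{every} middle term of this sum is of size $\approx A^2B^{n}n!$ (since $\binom{n}{j}j!(n-j)!=n!$), so the sum is comparable to $n\,A^2B^nn!$, i.e.\ of the same order as the damping $nM_n$, and the crude termwise log-convexity you invoke even produces an extra factor $n^{\gamma+\lambda}$ from interpolating $M_{j+\gamma+\lambda}$ between $M_j$ and $M_{j+1}$. With such estimates the induction does not close: the inductive constant degrades by a factor growing in $n$. What actually saves the argument is that the kernel homogeneity satisfies $\gamma<1$, which makes the coagulation term $o(nM_n)$; but to see this one must estimate the binomial sum sharply (Beta/Gamma-function bookkeeping, a Gamma-type ansatz $M_n\le AB^n\Gamma(n+1+p)$ with a constrained exponent $p$, and a careful choice of $A$ small and $B$ large for the base cases), none of which is "standard" in the form you state and none of which appears in your sketch. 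As written, step (i) — the foundation for (iii) — would fail.

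A secondary, more easily repairable issue is step (iii): you rewrite the profile equation as a first-order ODE involving $\phi'(\xi)$ and run Gr\"onwall along characteristics, but the regularity obtained in (ii) gives at most an $L^1_{loc}$/BV density, so $\phi'$ is not available. The paper's Proposition \ref{prop:expo decay} avoids this entirely by staying with the integrated flux identity \eqref{flux form}: for large $z$ one has $\bigl(\tfrac{2z^2}{1-\gamma}-\tfrac{z^{\gamma+\lambda+1}}{M_{\gamma+\lambda}}\bigr)\phi(z)\le J_\phi(z)+\tfrac{1}{M_{\gamma+\lambda}}\int_z^\infty y^{\gamma+\lambda}\phi(y)\der y$, and both right-hand terms are exponentially small once the exponential moment is known, which gives the pointwise decay with no differentiation of $\phi$ and no Gr\"onwall. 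I would recommend adopting the paper's $\Psi_a$ argument for the exponential moment and the integrated flux bound for the pointwise decay, or else supplying the sharp combinatorial estimate that your moment induction actually requires.
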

To prove this theorem we start by proving that every self-similar profile $\Phi$ in the sense of Definition \ref{def:self-similar profile} and such that $\Phi((0, \delta))=0$, for a positive $\delta$,
is absolutely continuous with respect to the Lebesgue measure and satisfies some moment bounds. 
\begin{proposition}[Regularity] \label{prop:regularity}
Assume $K$ to be a homogeneous symmetric coagulation kernel, of homogeneity $\gamma$, satisfying
\eqref{kernel bounds},  \eqref{kernel cont},  with $\gamma , \lambda $ such that  \eqref{avoid_gelation_parameters} holds and such that $ \gamma+2\lambda \geq  1.$
	Let $\Phi \in \mathcal M_+(\mathbb R_*)$ be a self-similar profile as in Definition \ref{def:self-similar profile}.
	Assume additionally that $\Phi((0, \delta ))=0$ for $\delta >0$. 
	Then $\Phi$ is absolutely continuous with respect to the Lebesgue measure. 
	Its density $\phi \in L^1([\delta, \infty ))$ satisfies the following equation 
	\begin{align} \label{flux form} 
&\int_0^z x \phi(x) dx - \frac{2}{1-\gamma} z^2 \phi(z) + \frac{1}{M_{\gamma+\lambda}(\phi)} z^{\gamma +\lambda +1 } \phi(z) - \frac{1}{M_{\gamma+\lambda}(\phi) } \int_0^z x^{\gamma +\lambda }\phi(x) \der x \\
	& = - J_\phi(z), \quad a.e.\  z > \delta \nonumber
	\end{align} 
	where 
	\begin{equation}\label{flux def}
	J_\phi(z):= \int_0^z \int_{z-x}^\infty x K(x,y) \phi(x) \phi(y) \der y \der x.
	\end{equation}
\end{proposition}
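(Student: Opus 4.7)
The plan is to derive \eqref{flux form} by testing the weak formulation \eqref{weak steady state eq} against a smooth approximation of the (non-admissible) function $\varphi_z(x):=x\,\chi_{(0,z]}(x)$, and to deduce absolute continuity of $\Phi$ from the structure of the resulting limit identity. Concretely, for fixed $z>\delta$ I would choose $\varphi_{z,\epsilon}\in C^1_c(\mathbb{R}_*)$ of the form $\varphi_{z,\epsilon}(x)=x\,\chi_\epsilon(x)\,k_\delta(x)$, where $\chi_\epsilon$ is a smooth monotone approximation of $\chi_{(0,z]}$ (decreasing from $1$ to $0$ across $[z-\epsilon,z+\epsilon]$) and $k_\delta$ is a fixed smooth cutoff vanishing near the origin; since $\Phi$ vanishes on $(0,\delta)$ by assumption, $k_\delta$ plays no role in any of the integrals, so $\varphi_{z,\epsilon}$ is genuinely compactly supported in $(0,\infty)$.

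The coagulation double integral on the right-hand side of \eqref{weak steady state eq} is handled by a direct case analysis of $\varphi_{z,\epsilon}(x+y)-\varphi_{z,\epsilon}(x)-\varphi_{z,\epsilon}(y)$ across the regions $\{x+y\le z\}$, $\{x,y\le z<x+y\}$ and $\{x\le z<y\}$ (together with the symmetric one). Using $K(x,y)=K(y,x)$, dominated convergence based on \eqref{kernel bounds} and \eqref{minimal boundary cond for ss}, and a standard rearrangement, this term converges as $\epsilon\to 0$ exactly to $-J_\Phi(z)$ with $J_\Phi$ given by \eqref{flux def} interpreted against the measure $\Phi$. On the left-hand side, $\varphi'_{z,\epsilon}=\chi_\epsilon+x\chi_\epsilon'$ (for $x\ge\delta/2$); the $\chi_\epsilon$ contribution, combined with $-\tfrac{1+\gamma}{1-\gamma}\int\varphi_{z,\epsilon}\,\Phi(\der x)$, simplifies in the limit to $\int_{(0,z]}x\,\Phi(\der x)-\tfrac{1}{M_{\gamma+\lambda}(\Phi)}\int_{(0,z]}x^{\gamma+\lambda}\,\Phi(\der x)$, which reproduces the non-boundary portion of \eqref{flux form}.

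The subtle piece is the localising term $\int x\,\chi_\epsilon'(x)\,F(x)\,\Phi(\der x)$, with $F(x)=\tfrac{2x}{1-\gamma}-\tfrac{x^{\gamma+\lambda}}{M_{\gamma+\lambda}(\Phi)}$, whose support shrinks to $\{z\}$ as $\epsilon\to 0$. A priori, if $\Phi$ had an atom at $z$, the contribution of that atom would scale like $\epsilon^{-1}zF(z)\Phi(\{z\})$ for any smooth monotone approximating family $\chi_\epsilon$, while all the other terms in the identity are uniformly bounded in $\epsilon$; this contradiction forces $\Phi(\{z_0\})=0$ for every $z_0>\delta$. Once atoms are excluded, the identity itself determines $\lim_\epsilon\int x\chi_\epsilon'F\,\Phi(\der x)$ as a classical function of $z$ that is Lebesgue-a.e.\ differentiable; combining this with the Lebesgue decomposition $\Phi=\phi\,\der x+\Phi_s$ and using that a singular-continuous measure has vanishing Lebesgue derivative almost everywhere, one concludes $\Phi_s\equiv 0$, i.e.\ $\Phi(\der x)=\phi(x)\,\der x$ for a measurable density $\phi$; the density lies in $L^1([\delta,\infty))$ because $\Phi$ is finite and supported in $[\delta,\infty)$. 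Substituting back into the limiting identity gives \eqref{flux form} for a.e.\ $z>\delta$. The main technical obstacle throughout is exactly this ``no atoms, then density'' bootstrapping step; the algebra in the earlier parts of the argument is a direct computation.
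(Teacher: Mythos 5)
Your derivation of the identity \eqref{flux form} by testing \eqref{weak steady state eq} with smooth approximations of $x\chi_{(0,z]}$ is essentially the same computation the paper performs (there in divergence form: the distributional derivative of $\bigl(\tfrac{x^{\gamma+\lambda}}{M_{\gamma+\lambda}}-\tfrac{2x}{1-\gamma}\bigr)\Phi$ is a locally finite measure, so this product has a $BV$ density), and that part is fine. The genuine gap is in your ``no atoms'' step. The atom contribution to the localising term scales like $\epsilon^{-1}z_0F(z_0)\Phi(\{z_0\})$, so the blow-up argument only excludes atoms at points where $F(z_0)\neq 0$. The coefficient $F(x)=\tfrac{2x}{1-\gamma}-\tfrac{x^{\gamma+\lambda}}{M_{\gamma+\lambda}}$ vanishes at exactly one point, $x=\rho(M_{\gamma+\lambda})$ defined in \eqref{rhoM}, and this point can lie in $(\delta,\infty)$ (the proposition is applied with $\delta$ possibly smaller than $\rho(M_{\gamma+\lambda})$; indeed the constructed profiles have their support starting precisely there). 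At that point your argument gives nothing, so the conclusion ``$\Phi(\{z_0\})=0$ for every $z_0>\delta$'' is not justified: a priori $\Phi(\der x)=\phi(x)\der x+c_0\delta_{\rho(M_{\gamma+\lambda})}(\der x)$. The paper needs a separate argument to kill this Dirac: if $c_0\neq 0$, the gain term of the coagulation operator applied to $c_0\delta_{\rho(M_{\gamma+\lambda})}$ produces a Dirac at $2\rho(M_{\gamma+\lambda})$ in the equation \eqref{eq: ss strong}, contradicting the absolute continuity of $\Phi$ on $(\rho(M_{\gamma+\lambda}),\infty)$ that has already been established. Your proposal contains no substitute for this step.

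A secondary problem is the justification of ``$\Phi_s\equiv 0$''. The fact that a singular-continuous measure has vanishing Lebesgue derivative Lebesgue-a.e.\ does not imply it vanishes (its derivative is infinite on a Lebesgue-null carrier set holding all its mass), so the step as written is not a proof. It can be repaired within your scheme: if $\Phi_s\neq 0$, then at $\Phi_s$-a.e.\ point $z$ of a carrier one has $\limsup_{\epsilon\to 0}\Phi_s([z-\epsilon,z+\epsilon])/\epsilon=\infty$, and since $\Phi_s$ has no atoms one may choose such a $z\neq\rho(M_{\gamma+\lambda})$, where $xF(x)$ is sign-definite nearby and positivity of $\Phi$ prevents cancellation; the localising term then blows up along a subsequence while all other terms in the identity stay bounded, a contradiction. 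With that repair, and with the missing exclusion of a Dirac at $\rho(M_{\gamma+\lambda})$ supplied (e.g.\ by the paper's coagulation-Dirac argument), your route does yield the proposition; as it stands, however, the absolute continuity claim is not proved at the one point where it is delicate.
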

\begin{proof}
We just sketch the proof as similar arguments have been repeatedly used in the analysis of coagulation equations, see for instance the proof of Lemma 4.9 in \cite{ferreira2021self}. An analogous proof will be presented in the proof of Proposition \ref{prop:support}. 
Let $\Phi$ be a solution of \eqref{eq: ss strong}, then 
\begin{align*} 
\partial_\xi \left[ \left( \frac{\xi^{\gamma + \lambda}}{M_{\gamma +\lambda} } - \frac{2}{1-\gamma} \xi \right) \Phi(\xi) \right] = \frac{1+\gamma}{1-\gamma} \Phi(\xi) + \mathbb K [\Phi](\xi).
\end{align*} 
Using the fact that 
\[
\mathbb K [\Phi] (\xi) \leq \int_{\delta}^{\xi-\delta} K(\xi-\eta,\eta) \Phi(\eta) \Phi(\xi-\eta) \der \eta,   \quad \xi > \delta
\]
and that $\int_{\delta}^{\xi-\delta} K(\xi-\eta,\eta) \Phi(\eta) \Phi(\xi-\eta) \der \eta $ is a bounded measure in every compact set, we deduce that the measure 
\[ 
\left( \frac{\xi^{\gamma + \lambda}}{M_{\gamma +\lambda} } - \frac{2}{1-\gamma} \xi \right) \Phi(\xi)
\] 
is  absolutely continuous with respect to the Lebesgue measure on the set $(\delta, \infty )$ if $\delta > \rho(M_{\gamma+\lambda})$, while it
is absolutely continuous with respect to the Lebesgue measure  in the set $(\delta,\rho(M_{\gamma +\lambda})) \cup (\rho(M_{\gamma+\lambda}), \infty )$ if $\delta \leq \rho(M_{\gamma+\lambda})$. In the latter case, we therefore have that $\Phi(\der x )=\phi(x) \der x + c_0 \delta_{\rho(M_{\gamma+\lambda})}(x) $
with density $\phi \in L^1(\mathbb R_*) $ and with $c_0 \in \mathbb R$.
To exclude that the self-similar solution $\Phi$ has a Dirac in $\rho(M_{\gamma+\lambda})$,
we notice that
\begin{align*}
&\int_\delta^{\xi-\delta } K(\xi- \eta , \eta) c_0\delta_{\rho(M_{\gamma+\lambda})}(\eta ) c_0 \delta_{\rho(M_{\gamma+\lambda})}(\xi -\eta) \der \eta \\
&= c_0^2 \int_\delta^{\xi-\delta} K(\xi-\eta , \eta) \delta_{\rho(M_{\gamma+\lambda})} (\eta) \delta_{\rho(M_{\gamma+\lambda})}(\xi-\eta) \der \eta
\end{align*} 
is non-zero only if $\xi= 2 \rho(M_{\gamma+\lambda}) $.
Hence the coagulation operator applied to measure $c_0 \delta_{\rho(M_{\gamma+\lambda})}$ produces a Dirac in $2 \rho(M_{\gamma+\lambda})$, contradicting the fact that $\Phi_{|(\rho(M_{\gamma+\lambda}), \infty)}$ is absolutely continuous with respect to the Lebesgue measure. As a consequence, we deduce that $c_0=0$. 
\end{proof} 

\begin{proposition}[Moment bounds] \label{prop:moment bounds} 
Assume $K$ to be a homogeneous symmetric coagulation kernel, of homogeneity $\gamma$, satisfying
\eqref{kernel bounds},  \eqref{kernel cont},  with $\gamma , \lambda $ such that  \eqref{avoid_gelation_parameters} holds and such that
$
\gamma+2\lambda \geq  1.$
	Let $\Phi \in \mathcal M_+(\mathbb R_*)$ be a self-similar profile as in Definition \ref{def:self-similar profile} and assume additionally that $\Phi((0,\delta))=0$, where $\delta>0$.
	Then 
	\[
	\int_{\mathbb R_*} x^\mu \Phi(\der x) = \int_{[\delta, \infty )} x^\mu \Phi( \der x) < \infty, \quad \forall \mu \in \mathbb R. 
	\]
\end{proposition}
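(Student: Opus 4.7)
The plan is to prove $M_\mu(\Phi)<\infty$ for every $\mu\in\mathbb R$ by combining the support condition $\Phi((0,\delta))=0$ with an induction on the weak formulation \eqref{weak steady state eq}. The low moments come for free from the support: for any $\mu\leq\gamma+\lambda$, the pointwise bound $x^\mu\leq\delta^{\mu-(\gamma+\lambda)}x^{\gamma+\lambda}$ valid on $[\delta,\infty)$ yields
\[
M_\mu\leq\delta^{\mu-(\gamma+\lambda)}M_{\gamma+\lambda}<\infty,
\]
by \eqref{minimal boundary cond for ss}. Since $\gamma+\lambda\geq\tfrac{1+\gamma}{2}>0$ whenever $\gamma>-1$, this covers all negative moments together with $M_0$. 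To pin down $M_1$, I would plug a sequence of cutoff test functions $\varphi_R(x)=x\chi_R(x)\in C^1_c(\mathbb R_*)$ into \eqref{weak steady state eq} and pass to the limit $R\to\infty$: the coagulation term vanishes by mass conservation and the result is $M_1=1$, after which Hölder interpolation between $M_{\gamma+\lambda}$ and $M_1$ covers all moments of index in $[\gamma+\lambda,1]$.

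For $\mu>1$ I would proceed by induction on the order: assume $M_\nu<\infty$ for all $\nu\leq\mu_0$ with $\mu_0\geq 1$, and extend finiteness to $\mu\in(\mu_0,\mu_0+(1-\gamma-\lambda)]$. The key input is the elementary inequality $(x+y)^\mu-x^\mu-y^\mu\leq c_\mu(xy^{\mu-1}+x^{\mu-1}y)$ for $\mu\geq 1$, which combined with the upper kernel bound in \eqref{kernel bounds} controls the coagulation integral in \eqref{weak steady state eq} (evaluated at $\varphi(x)=x^\mu\chi_R(x)$) by products of moments of indices $\mu+\gamma+\lambda-1$, $1-\lambda$, $\mu-1-\lambda$ and $1+\gamma+\lambda$. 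All of these except the last have indices strictly less than $\mu$ and at most $\mu_0$ (using $\gamma+\lambda<1$, $\lambda>0$, and $\gamma+2\lambda\geq 1$), and are therefore finite by induction hypothesis. The potentially troublesome factor $M_{1+\gamma+\lambda}$ is bounded via Hölder and Young as $M_{1+\gamma+\lambda}\leq\varepsilon M_\mu+C_\varepsilon$, and the small contribution $\varepsilon M_\mu$ is absorbed into the positive coefficient $\tfrac{2\mu-1-\gamma}{1-\gamma}$ that the transport term produces on the left side of \eqref{weak steady state eq}. Iterating through steps of size $1-\gamma-\lambda>0$ reaches any prescribed $\mu>1$ in finitely many rounds.

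The main technical obstacle is justifying the cutoff/limit procedure, both in the $M_1=1$ step and at each iteration of the induction. Differentiating $\varphi_R(x)=x^\mu\chi_R(x)$ produces boundary contributions of the form $\int x^{\mu+1}\chi_R'(x)\Phi(\der x)$ and $\int x^{\mu+\gamma+\lambda}\chi_R'(x)\Phi(\der x)$ that I would need to show vanish as $R\to\infty$; these are controlled uniformly in $R$ by the moment estimates already available at each induction step, after which dominated or monotone convergence closes the argument. For the base step $M_1=1$, where no a priori bound on $M_1$ is assumed, the nonnegativity of $\Phi$ together with the algebraic cancellation in the coagulation term for $\varphi(x)=x$ must be exploited to obtain the necessary tail control on $x\Phi(\der x)$ before taking the limit.
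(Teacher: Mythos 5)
There is a genuine gap, and it sits exactly where you placed your ``main technical obstacle''. Definition \ref{def:self-similar profile} only provides $M_{\gamma+\lambda}(\Phi)<\infty$ and $\int_{(0,1)}x^{1-\lambda}\Phi(\der x)<\infty$; finiteness of $M_1$ is part of the conclusion of the proposition, not a hypothesis, yet your scheme makes $M_1=1$ the foundation for every index above $\gamma+\lambda$. The cutoff argument you sketch does not close: with $\varphi_R(x)=x\chi_R(x)$ the combination $\varphi_R(x+y)-\varphi_R(x)-\varphi_R(y)$ is not zero, and showing that its contribution to \eqref{weak steady state eq} vanishes as $R\to\infty$ requires tail control on $x\Phi(\der x)$, i.e.\ precisely the moment information being proved. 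Even the favourable choice $\varphi_R(x)\simeq\min\{x,R\}$, which makes the coagulation term nonpositive, only yields $\int_{(0,R)}x\,\Phi(\der x)\leq 1+\frac{1+\gamma}{1-\gamma}\,R\,\Phi([R,\infty))$, and the last term is not controllable a priori when $\gamma>-1$. The paper avoids $M_1$ altogether: starting from the flux-form identity \eqref{equality psi} (via Proposition \ref{prop:regularity}), it climbs from $M_{\gamma+\lambda}$ in increments $\overline\delta<1-\gamma-\lambda$, taking at the first step a \emph{decreasing} weight $\psi_R$, so that $\psi_R(x+y)-\psi_R(x)\leq 0$ and the coagulation term can simply be dropped; the hypothesis $\gamma+2\lambda\geq 1$ is what makes the coefficient of the new (truncated) moment on the left positive, and the right-hand side involves only moments below $\gamma+\lambda$, already finite by the support assumption.

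Your induction step for $\mu>1$ has a second problem. The absorption $M_{1+\gamma+\lambda}\leq\varepsilon M_\mu+C_\varepsilon$ is only legitimate when $1+\gamma+\lambda\leq\mu$ (Young applied to an interpolation between a known moment and $M_\mu$); one cannot bound a higher moment by a small multiple of a lower one. Since your step length is $1-\gamma-\lambda$, at the first increments past $\mu_0=1$ you have $\mu\leq 2-\gamma-\lambda$, and whenever $\gamma+\lambda>\tfrac12$ (e.g.\ $\gamma=\tfrac12$, $\lambda=\tfrac3{10}$, so $\mu\leq 1.2$ while $1+\gamma+\lambda=1.8$) the absorption fails and the induction does not close. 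The repair is the estimate the paper uses: by symmetry restrict to $x\leq y$, where $K(x,y)\lesssim x^{-\lambda}y^{\gamma+\lambda}$ (this uses $\gamma+2\lambda\geq 0$) and $(x+y)^\mu-x^\mu-y^\mu\lesssim x\,y^{\mu-1}$, so the coagulation term is bounded by $M_{1-\lambda}\,M_{\mu-(1-\gamma-\lambda)}$, finite by the induction hypothesis, and the factor $M_{1+\gamma+\lambda}$ never appears. A smaller point: your remark that the low moments cover all $\mu\leq 0$ invokes $\gamma>-1$, which is not assumed here — the proposition is used in the paper's non-existence argument precisely for $\gamma\leq-1$, where $\gamma+\lambda$ may be $\leq 0$; that would be repaired by interpolation once $M_1<\infty$ is known, but it again funnels everything through the unproven base step.
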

\begin{proof}
	For $\mu \leq \gamma + \lambda$ the statement follows by the fact that $\Phi((0, \delta))=0$ and that by definition of self-similar profile we know that
	\[
	\int_{[\delta, \infty )} x^{\gamma + \lambda} \Phi(\der x) < \infty.
	\]
	We want to prove the statement for $\mu > \gamma + \lambda.$
	As a first step, we prove that there exists a $\overline \delta >0$ such that 
	\begin{equation}\label{moment 1+delta}
	\int_{\mathbb R_*} x^{\gamma+ \lambda +\overline \delta } \phi(x) \der x  < \infty, 
	\end{equation}
	where $\phi$ is the density of $\Phi$.
	As a second step we will prove that 
	\begin{equation}\label{moment 1+(n-1)delta}
	\int_{\mathbb R_*} x^{\gamma+ \lambda +n \overline \delta } \phi(x) \der x  < \infty 
	\end{equation}
	for $n \geq 1$ implies that
	\begin{equation}\label{moment 1+ndelta}
	\int_{\mathbb R_*} x^{\gamma+ \lambda +(n+1)\overline \delta } \phi(x) \der x  < \infty. 
	\end{equation}
	The desired conclusion then follows by induction. \\

	\textbf{Step 1} \\
	Consider an integrable function $\varphi$ and integrate both sides of equation \eqref{flux form} against the function $\psi(x)= \int_x^\infty \varphi(y) \der y$ to deduce that 
	\begin{align} \label{equality psi}
&	\int_0^\infty \psi(z) z \phi(z) \der z  -  \int_0^\infty \phi(z) \psi'(z) \left( \frac{z^{\gamma+\lambda+1}}{M_{\gamma + \lambda}} - \frac{2}{1-\gamma } z^2 \right) \der z - \frac{1}{M_{\gamma+\lambda} } \int_0^\infty \psi(z) z^{\gamma+\lambda} \phi(z) \der z \nonumber\\
	&= \int_0^\infty \int_0^\infty x K(x,y) \left( \psi(x+y)-\psi(x)\right) \phi(x) \phi(y) \der y \der x .
	\end{align} 
We remark that the above formulation requires that $\psi$ has to decay fast enough so that all the integrals in \eqref{equality psi} are finite. 

Select $\overline \delta >0$ such that 
$\max\{0, 1- 2 (\gamma+\lambda)\} < \overline \delta  < 1-\gamma - \lambda $ and consider $\psi_R(x)=x^{\gamma+\lambda + \overline \delta -1 }$ if $x \leq R$ while $\psi_R(x)=R^{\overline \delta} x^{\gamma+\lambda  -1 }$ if $x > R$. 
Using \eqref{equality psi} and the fact that $\gamma+ 2\lambda \geq 1 $ and that $\psi_R(x+y)-\psi_R(x) \leq 0$ we deduce that 
\begin{align*}
0< \frac{\gamma + 2\lambda -1 +2\overline \delta } {1-\gamma } \int_0^R z^{\gamma + \lambda + \overline \delta }  \phi(z) \der z \leq \frac{1}{M_{\gamma+\lambda} } \int_0^\infty z^{2(\gamma+\lambda)+ \overline \delta -1 } \phi(z) \der z < \infty,
\end{align*} 
where the moment $M_{2(\gamma+\lambda)+ \overline \delta -1 }$ is bounded because, due to the choice of $\overline \delta$, we have that $2(\gamma +\lambda ) + \overline \delta -1 < \gamma +\lambda $.
Passing to the limit as $R$ tends to infinity we deduce that \eqref{moment 1+delta} holds. \\
	
		\textbf{Step 2} \\
		We assume that inequality \eqref{moment 1+(n-1)delta} holds and we want to prove \eqref{moment 1+ndelta}.
		Taking $\psi_R(x)= x^{\gamma + \lambda + (n+1) \overline \delta-1 }$ when $x \leq R$ and $\psi_R(x)= R^{\overline \delta} x^{\gamma + \lambda + n \overline \delta-1 }$ when $x > R$ in \eqref{equality psi} we deduce that
			\begin{align*}
&0 <\frac{\gamma+2\lambda -1 +2\overline \delta (n+1 )}{1-\gamma } \int_0^R z^{\gamma + \lambda + (n+1)\overline \delta }  \phi(z) \der z \leq \frac{1}{M_{\gamma+\lambda} } \int_0^\infty z^{2(\gamma+\lambda)+(n+1) \overline \delta -1 } \phi(z) \der z \\
&+ \int_\delta^\infty \int_\delta^\infty  x K(x,y) \left[\psi_R(x+y)-\psi_R(x) \right] \phi(x) \phi(y) \der y \der x + \int_0^\infty \phi(z) \psi_R'(z) z^{\gamma+\lambda+1} \phi(z) \der z .
		\end{align*}
Thanks to the choice of $\overline \delta $, we have that $2(\gamma+\lambda)+(n+1) \overline \delta -1 \leq 	\gamma+\lambda+ n \overline \delta $
	and the desired conclusion follows for $n$ such that $\gamma+\lambda +(n+1)\overline \delta-1 < 1$ by the fact that in that case $\psi_R$ is decreasing and hence the coagulation term is negative as well as the term $\int_0^\infty \phi(z) \psi_R'(z) z^{\gamma+\lambda+1} \phi(z)$. 
	We examine now the case in which $\gamma+\lambda +(n+1)\overline \delta-1 \geq 1$.
	In this case we have that 
	\begin{align*}
&\int_\delta^\infty \int_\delta^\infty x K(x,y) \left[\psi_R(x+y)-\psi_R(x) \right] \phi(x) \phi(y) \der y \der x \\
&\leq c  \int_\delta^\infty \int_\delta^\infty  x K(x,y) \left[(x+y)^{\gamma+\lambda +(n+1) \overline \delta -1} -x^{\gamma+\lambda +(n+1) \overline \delta -1} \right] \phi(x) \phi(y) \der y \der x \\
& \leq c 
\iint_{\{(x,y) \in[\delta, \infty )^{2} : x \leq y\}}  x^{1-\lambda } y^{\gamma+\lambda}  \left[(x+y)^{\gamma+\lambda +(n+1) \overline \delta -1} -x^{\gamma+\lambda +(n+1) \overline \delta -1} \right] \phi(x) \phi(y) \der y \der x\\
&+ c 
\iint_{\{(x,y) \in [\delta, \infty )^{2} : x \geq y\}}  x^{1+\gamma +\lambda } y^{-\lambda}  \left[(x+y)^{\gamma+\lambda +(n+1) \overline \delta -1} -x^{\gamma+\lambda +(n+1) \overline \delta -1} \right] \phi(x) \phi(y) \der y \der x \\
& \leq c \int_\delta^\infty x^{1-\lambda} \phi(x) \der x \int_\delta^\infty y^{2(\gamma+\lambda) +(n+1) \overline \delta -1 } \phi(y) \der y < \infty.
\end{align*} 
Since we also have that
\begin{align*}
    \int_0^\infty \phi(z) \psi_R'(z) z^{\gamma+\lambda+1} \phi(z) \der z \leq c(n)   \int_0^\infty \phi(z)  z^{2(\gamma+\lambda) + (n+1)\delta -1} \phi(z) \der z < \infty
\end{align*}
the desired conclusion follows.
	\end{proof}

\begin{lemma}[Exponential bound] \label{lemma:expo bound}
Assume $K$ to be a homogeneous symmetric coagulation kernel, of homogeneity $\gamma$, satisfying
\eqref{kernel bounds},  \eqref{kernel cont},  with $\gamma , \lambda $ such that  \eqref{avoid_gelation_parameters} holds and such that
$ \gamma+2\lambda \geq  1.$
		Let $\Phi \in \mathcal M_+(\mathbb R) $ be as in Proposition \ref{prop:regularity}  and let $\phi$ be its density. 
	Then there exist two positive constants $L$ and $M$ such that 
	\begin{equation}\label{expo bound}
	\int_{M}^\infty \phi(z) e^{L z } \der z < \infty. 	
	\end{equation}
	\end{lemma}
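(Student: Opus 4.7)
The plan is to derive a nonlinear integral inequality for the exponentially weighted moment $\mathcal{F}(L):=\int_\delta^\infty z^2 e^{Lz}\phi(z)\,\der z$ and close it by a continuity/bootstrap argument seeded by $\mathcal{F}(0)<\infty$, which is guaranteed by Proposition \ref{prop:moment bounds}. Finiteness of $\mathcal{F}(L)$ for some $L>0$ immediately yields \eqref{expo bound} since $\int_M^\infty e^{Lz}\phi(z)\der z \leq M^{-2}\mathcal{F}(L)$.

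First I exploit the flux form of Proposition \ref{prop:regularity}, which can be rewritten as
$$\Bigl(\frac{2z^2}{1-\gamma}-\frac{z^{\gamma+\lambda+1}}{M_{\gamma+\lambda}}\Bigr)\phi(z)=J_\phi(z)+\frac{\Psi_{\gamma+\lambda}(z)}{M_{\gamma+\lambda}}-\Psi_1(z),$$
where $\Psi_\alpha(z):=\int_z^\infty y^\alpha \phi(y)\der y$. Since $\gamma+\lambda+1<2$, the coefficient on the left is bounded below by a fixed multiple of $z^2$ for $z$ above some threshold $z_0$. Discarding the nonpositive $-\Psi_1(z)$ yields the pointwise bound $z^2\phi(z)\leq C[J_\phi(z)+\Psi_{\gamma+\lambda}(z)]$ for $z\geq z_0$.

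Next I multiply by $e^{Lz}$ and integrate over $[z_0,\infty)$. Unfolding $J_\phi$ and applying Fubini, for each $(x,y)$ the inner $z$-integral over $[\max(z_0,x),x+y]$ is at most $e^{L(x+y)}/L$. The kernel bound \eqref{kernel bounds} then turns the flux contribution into a sum of products of weighted moments $G_\alpha(L):=\int_\delta^\infty z^\alpha e^{Lz}\phi(z)\der z$, namely $G_{1+\gamma+\lambda}(L)G_{-\lambda}(L)+G_{1-\lambda}(L)G_{\gamma+\lambda}(L)$, and a parallel treatment bounds the $\Psi_{\gamma+\lambda}$ term by $L^{-1}G_{\gamma+\lambda}(L)$. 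Every exponent appearing is strictly less than $2$ (using $\gamma+\lambda<1$ together with $\lambda>0$, the latter following from $\gamma+2\lambda\geq 1$ and $\gamma+\lambda<1$), so for $z\geq M$ one has $z^\alpha\leq z^2 M^{\alpha-2}$, which yields $G_\alpha(L)\leq C_M + M^{\alpha-2}\mathcal{F}(L)$ with $C_M<\infty$. Substituting produces an inequality of the schematic form
$$\mathcal{F}(L)\leq A(M)\mathcal{F}(L)^2 + B(M)\mathcal{F}(L) + C(M,L),$$
in which $A(M)$ and $B(M)$ only involve negative powers of $M$ (such as $M^{\gamma-3}$ and $M^{\gamma+\lambda-2}$) and therefore tend to $0$ as $M\to\infty$, while $C(M,L)$ remains finite for fixed $M$ and $L$.

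Fixing $M$ large enough that $B(M)<1/2$ and that the quadratic $A\mathcal{F}^2-(1-B)\mathcal{F}+C(M,0)$ has two well-separated positive roots with $\mathcal{F}(0)=M_2(\phi)$ strictly below the smaller one, I close the estimate on an interval $[0,L_0]$ by a standard truncation-and-continuity argument: work with $\mathcal{F}_N(L):=\int_\delta^N z^2 e^{Lz}\phi(z)\der z$, which is finite and continuous in $L$ for every $N$ and satisfies the same quadratic inequality uniformly in $N$. The intermediate-value principle then forces $\mathcal{F}_N(L)$ to remain on the small-root branch for all $L\in[0,L_0]$, and letting $N\to\infty$ concludes. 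The main obstacle is precisely this bootstrap step, since the quadratic structure does not a priori exclude a jump to the large root; the resolution is the uniform-in-$N$ continuity of $L\mapsto \mathcal{F}_N(L)$ together with the uniform seed $\mathcal{F}_N(0)\leq\mathcal{F}(0)<\infty$, both of which become available once $M$ has been chosen large enough to shrink $A(M)$ and $B(M)$.
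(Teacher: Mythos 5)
Your overall strategy (an exponentially weighted moment controlled through the flux form, leading to a quadratic inequality with a small leading coefficient, closed by a continuity/forbidden-gap bootstrap seeded at $L=0$ via Proposition \ref{prop:moment bounds}) is the same circle of ideas as the paper, which follows \cite{fournier2006local} by working with $\Psi_a(L)=\int_M^\infty \frac{e^{L\min\{x,a\}}}{\min\{x,a\}}x^2\phi(x)\der x$ and a differential inequality in $L$. The genuine gap is in your truncation step, which you yourself identify as the crux. With the domain truncation $\mathcal F_N(L)=\int_\delta^N z^2e^{Lz}\phi(z)\der z$, the claim that $\mathcal F_N$ ``satisfies the same quadratic inequality uniformly in $N$'' is not justified and does not follow from your derivation: when you unfold $J_\phi(z)$ for $z\in[z_0,N]$, the variable $y$ is \emph{not} confined to $[\delta,N]$, and after the inner $z$-integration you are left with contributions of the type $e^{LN}\int_N^\infty y^{1-\lambda}\phi(y)\der y$ (or with $y^{-\lambda}$, $y^{\gamma+\lambda}$). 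A priori you only have polynomial moment bounds, so these terms are exponentially large in $N$ against a merely polynomially small tail; they can neither be expressed through $\mathcal F_N$ nor bounded uniformly in $N$ without already knowing the exponential integrability you are trying to prove. This is exactly why the paper (and \cite{fournier2006local}) truncates the \emph{weight} rather than the domain: with $e^{L\min\{\cdot,a\}}$ the submultiplicativity $\min\{x+y,a\}\le\min\{x,a\}+\min\{y,a\}$ closes the estimate on the truncated quantity with constants uniform in $a$, all quantities being finite by the polynomial moment bounds. Replacing your $\mathcal F_N$ by the weight-truncated moment repairs the argument, but as written the bootstrap does not go through.

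Three further points you should address even after that repair. First, your rewriting of \eqref{flux form} as $\bigl(\tfrac{2z^2}{1-\gamma}-\tfrac{z^{\gamma+\lambda+1}}{M_{\gamma+\lambda}}\bigr)\phi(z)=J_\phi(z)+\tfrac{\Psi_{\gamma+\lambda}(z)}{M_{\gamma+\lambda}}-\Psi_1(z)$ tacitly uses $M_1(\Phi)=1$; this is true for self-similar profiles (test \eqref{weak steady state eq} with a truncation of $\varphi(x)=x$, using the moment bounds), but it must be said, since otherwise a nonzero constant $M_1-1$ survives and, multiplied by $e^{Lz}$ and integrated to infinity, destroys the estimate. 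Second, the factor $1/L$ from the inner $z$-integral blows up as $L\to0$, precisely where your bootstrap is seeded; use instead the length bound $\int_x^{x+y}e^{Lz}\der z\le y\,e^{L(x+y)}$ (all resulting exponents are still $<2$ since $\lambda>0$ and $\gamma+\lambda<1$). Third, the coefficient $B(M)$ is not ``only negative powers of $M$'': the cross terms carry $C_M\cdot M^{\alpha-2}$ with $C_M$ growing in $M$ (and in $L$), so they are not small; they can be absorbed by Young's inequality into the quadratic and constant terms, but then the discriminant condition couples $M$ and $L_0$ (one must take $L_0$ small depending on $M$), and this interplay needs to be made explicit.
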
 
\begin{proof}

Adapting the approach used in \cite{fournier2006local} we define the function $\Psi_a $ as 
\[
\Psi_a(L):= \int_M^\infty
 \frac{e^{L \min\{ x, a \}}}{\min\{ x, a \}}   x^2 \phi(x) \der x  
\]
where $M > \delta$. 
Hence, by its definition
\[
\Psi_a'(L)=  \int_M^\infty
e^{L \min\{ x, a \}}x^2 \phi(x) \der x .
\]
We consider a function $\psi$ in \eqref{equality psi} with
$\psi'(x):=
  e^{L \min\{ x, a \}} $
 to deduce that
\begin{align*}
 \frac{2}{1-\gamma } \Psi_a' (L) & \leq \frac{M^{\gamma+\lambda-1}}{M_{\gamma+\lambda}} \int_M^\infty e^{L \min\{ x,a \}} x^{2} \phi(x) \der x +\frac{1} {M_{\gamma+\lambda}} \int_\delta^M  e^{L \min\{ x,a \}} x^{\gamma + \lambda +1 } \phi(x) \der x 
\\
&+\int_{\mathbb R_*}  \int_{\mathbb R_*} x K(x,y) \int_{y}^{x+y}   e^{L \min\{ w, a \}} \der w \phi(x) \phi(y) \der x \der y \\
&+ 
\int_\delta^{\overline z } \left(\frac{1}{M_{\gamma+\lambda}} x^{\gamma + \lambda } -x \right) \psi(x) \phi(x) \der x, 
\end{align*}
for $
\overline z := \left( \frac{1}{ M_{\gamma+\lambda}}\right)^{\frac{1}{1-\gamma - \lambda}}$, where we are assuming without loss of generality that $\overline z> \delta$ and since for every  $z \geq \overline z$ we have
\begin{equation} \label{positivity} 
z- \frac{z^{\gamma + \lambda}}{M_{\gamma + \lambda }} > 0.
\end{equation}
Thus, there exists a $c \geq 0$ such that
\begin{align*}
 \frac{2}{1-\gamma } \Psi_a' (L) \leq  \Psi_a' (L)  \frac{M^{\gamma+\lambda-1}}{M_{\gamma+\lambda}}+ c 
+\int_{\mathbb R_*}  \int_{\mathbb R_*} x K(x,y) \int_{y}^{x+y}   e^{L \min\{ w, a \}} \der w \phi(x) \phi(y) \der x \der y . 
\end{align*}

As in \cite{ferreira2021self} and \cite{fournier2006local} we can deduce, using Jensen's inequality together with the fact that $\gamma + \lambda < 1 $ and that $- \lambda < 1 $, that
\[
\int_{\mathbb R_*}  \int_{\mathbb R_*} x K(x,y) \int_{y}^{x+y}   e^{L \min\{ w, a \}} \der w \phi(x) \phi(y) \der x \der y \leq \Psi_a(L)^{1-\gamma -\lambda } \Psi_a'(L)^{\gamma +\lambda } .
\] 
This, together with the fact that we can take $M$ arbitrary large, implies that 
\begin{align}\label{horrible ODE}
c(\gamma,\lambda) \Psi_a' (L) \leq \Psi_a (L)^{1-\gamma -\lambda} \Psi_a' (L)^{\gamma+\lambda}
+ c,
\end{align}
for a positive constant $c$ and for $ c(\gamma, \lambda)= \frac{2}{1-\gamma }-\frac{M^{\gamma+\lambda-1}}{M_{\gamma + \lambda}}>0.$

By the definition of $\Psi_a$ we have that 
\[
\Psi_a(0) \leq \frac{2}{1-\gamma} \left( 1+ \frac{c_1}{a} \right) \rightarrow \frac{2}{1-\gamma} \text{ as } a \rightarrow \infty.
\]
If we prove that $ \limsup_{a \rightarrow \infty } \Psi_a(L) < \infty $ for some $L$, then we can conclude.
 Indeed we would have 
\[
M \int_M^\infty e^{Lx }  \phi(x) \der x  \leq \int_M^\infty e^{Lx } x  \phi(x) \der x  < \infty.
\] 
Let us prove that $ \limsup_{a \rightarrow \infty } \Psi_a(L) < \infty $ for some $L$.
First of all notice that this is true if $\Psi_a'(L) \leq 1 $ as in this case $\Psi_a (L) \leq L + \lim_{a \rightarrow \infty} \Psi_a(0) $ and the desired conclusion follows. 
If instead $\Psi_a' >1 $, then
\[
c(\gamma, \lambda) \Psi_a'(L) \leq \Psi_a (L)^{1-\gamma -\lambda} \Psi_a' (L)^{\gamma+\lambda}+c  \leq \alpha \Psi_a (L)^{1-\gamma -\lambda} \Psi_a' (L)+c.
\] 
Without loss of generality we can assume $\alpha >0$ such that $ \lim_{a \rightarrow \infty } \Psi_a(0) \neq \alpha$. This implies that a solution of the ODE 
\[
\Psi_a' \left( 1- \alpha  \Psi_a^{1-\gamma-\lambda} \right) =c
\] 
exists for small intervals around $0$ and is such that  $\lim_{a \rightarrow \infty } \Psi_a (L) < \infty $ for $L$ in that interval.
	\end{proof} 

\begin{proposition}[Exponential decay]\label{prop:expo decay}
Assume $K$ to be a homogeneous symmetric coagulation kernel, of homogeneity $\gamma$, satisfying
\eqref{kernel bounds},  \eqref{kernel cont},  with $\gamma , \lambda $ such that  \eqref{avoid_gelation_parameters} holds and such that $\gamma+2\lambda \geq  1.$
		Let $\Phi \in \mathcal M_+(\mathbb R) $ be as in Proposition \ref{prop:regularity}  and let $\phi$ be its density. 
	Then there exists a positive constant $\tilde{M}$ such that 
	\begin{equation}\label{expo decay}
\limsup_{z \rightarrow \infty } \phi(z) e^{\tilde{M} z } < \infty. 	
	\end{equation}
\end{proposition}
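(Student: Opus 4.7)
The plan is to work from the flux form equation \eqref{flux form} of Proposition \ref{prop:regularity}. First I would rewrite the two integrals by passing to tails: $\int_0^z x\phi(x)\der x = 1 - \int_z^\infty x\phi(x)\der x$ and $\int_0^z x^{\gamma+\lambda}\phi(x)\der x = M_{\gamma+\lambda} - \int_z^\infty x^{\gamma+\lambda}\phi(x)\der x$. Substituting these into \eqref{flux form} causes the two $O(1)$ contributions to cancel, leaving
\begin{align*}
\left[\frac{2}{1-\gamma}z - \frac{z^{\gamma+\lambda}}{M_{\gamma+\lambda}}\right]z\phi(z)
= -\int_z^\infty x\phi(x)\der x + \frac{1}{M_{\gamma+\lambda}}\int_z^\infty x^{\gamma+\lambda}\phi(x)\der x + J_\phi(z).
\end{align*}
Since $\gamma+\lambda<1$, the bracket on the left is strictly positive and behaves like $\tfrac{2}{1-\gamma}z$ for $z$ large, so after multiplying by $z$ the prefactor grows like $z^2$.

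The key step is to show that each term on the right-hand side decays exponentially, which will give the result by dividing by the $z^2$-like coefficient. Fix any $L'\in(0,L)$, where $L$ is the exponent from Lemma \ref{lemma:expo bound}. Since $\phi$ is supported in $[\delta,\infty)$ for some $\delta>0$ and $x^\alpha e^{L' x}\le C_\alpha e^{Lx}$ for $x$ large, all weighted exponential moments $\int_\delta^\infty x^\alpha e^{L'x}\phi(x)\der x$ are finite. Hence for every $\alpha\in\mathbb R$
\begin{align*}
\int_z^\infty x^\alpha\phi(x)\der x \le e^{-L' z}\int_z^\infty x^\alpha e^{L' x}\phi(x)\der x \le C_\alpha\, e^{-L' z},
\end{align*}
which handles the two tail integrals.

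For the flux term, using the kernel bound \eqref{kernel bounds} and Fubini,
\begin{align*}
J_\phi(z) \le c_1\int_0^z x\phi(x)\int_{z-x}^\infty\bigl[x^{\gamma+\lambda}y^{-\lambda} + x^{-\lambda}y^{\gamma+\lambda}\bigr]\phi(y)\der y\,\der x.
\end{align*}
Applying the tail bound above with $\alpha=-\lambda$ and $\alpha=\gamma+\lambda$ to the inner integral produces a factor $e^{-L'(z-x)}$. The outer integral $\int_0^z x\bigl[x^{\gamma+\lambda}+x^{-\lambda}\bigr]e^{L' x}\phi(x)\der x$ is bounded by a constant independent of $z$ (again by the exponential moment bound together with $\gamma+2\lambda\ge 1$, which controls the near-origin behaviour after using that $\Phi$ is supported in $[\delta,\infty)$). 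This yields $J_\phi(z)\le C\,e^{-L' z}$. Combining the three estimates, for $z$ sufficiently large one obtains $\phi(z)\le C\,e^{-L'z}/z^2$, which gives \eqref{expo decay} with any $\tilde M<L$.

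The only real subtlety is that \eqref{flux form} holds only a.e.; I would handle this by noting that both sides of the rearranged identity above are continuous modifications on $(\rho(M_{\gamma+\lambda}),\infty)$ — the right-hand side is continuous as a function of $z$, so the left-hand side admits a continuous representative and the pointwise bound extends to the full half-line. The main technical obstacle is the absorption argument ensuring that $\int_0^\infty x^{1-\lambda}e^{L'x}\phi(x)\der x<\infty$ when $\lambda$ is large; this follows from $\gamma+2\lambda\ge 1$ (so $1-\lambda\le\gamma+\lambda$) combined with $L'<L$, so $x^{1-\lambda}$ is dominated by $x^{\gamma+\lambda}$ for $x\ge 1$ and the exponential factor absorbs the rest.
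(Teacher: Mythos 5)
Your argument is correct and is essentially the paper's own proof: both start from the flux identity \eqref{flux form}, use the exponential moment bound of Lemma \ref{lemma:expo bound} to show that $J_\phi(z)$ and the remaining moment terms decay exponentially, and then divide by the coefficient $\frac{2}{1-\gamma}z^2-\frac{z^{\gamma+\lambda+1}}{M_{\gamma+\lambda}}$, which is positive and of order $z^2$ for large $z$. The only differences are cosmetic: you rewrite the partial moments as total moments minus tails using the normalization $M_1=1$ (which the paper also uses implicitly through the bound $\int_0^z x\phi(x)\,\der x\le 1$, and which indeed follows from the equation), whereas the paper bounds the moment difference directly by $\frac{1}{M_{\gamma+\lambda}}\int_z^\infty x^{\gamma+\lambda}\phi(x)\,\der x$, and you extract the factor $e^{-L'(z-x)}$ from the inner integral of $J_\phi$ with $L'<L$ instead of factoring $e^{-Lz}$ out via $x+y\ge z$.
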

\begin{proof}
From equation \eqref{flux form} we deduce that 
\begin{align*}
\left( \frac{2}{1-\gamma} z^2 - \frac{z^{\gamma +\lambda +1 }}{M_{\gamma+\lambda}(\phi)} \right)  \phi(z)  
\leq \int_0^z x \phi(x) \der x - \frac{1}{M_{\gamma+\lambda}(\phi)}\int_0^z x^{\gamma +\lambda }\phi(x) \der x +  J_\phi(z). 
	\end{align*} 
We now show that $J_\phi$ decays exponentially and that the term 
\[
\int_0^z x \phi(x) \der x - \frac{1}{\int_0^\infty y^{\gamma + \lambda } \phi(y) \der y } \int_0^z x^{\gamma +\lambda }\phi(x) \der x 
\]
decays exponentially too. 

First of all, we prove that there exists a constant $M_1>0$ such that $ J_\phi(z) \leq e^{- M_1 z}$ for large $z$. 
To this end we notice that the bound \eqref{expo bound} implies that
\begin{align*}
 J_\phi(z)&= \int_0^z \int_{z-x}^\infty e^{- L(x+y) }e^{L(x+y) } x K(x,y) \phi(x) \phi(y) \der y \der x \\
& \leq  e^{- Lz  }  \int_0^z \int_{z-x}^\infty e^{L(x+y) } x K(x,y) \phi(x) \phi(y) \der y \der x  \leq e^{-M_1 z } . 
\end{align*} 

On the other side, for large values of $z$ we have that
\begin{align*}
&\int_0^z x \phi(x) \der x - \frac{1}{\int_0^\infty y^{\gamma + \lambda } \phi(y) \der y } \int_0^z x^{\gamma +\lambda }\phi(x) \der x  \\
&= \frac{1}{\int_0^\infty y^{\gamma + \lambda } \phi(y) \der y } \left(\int_0^z x \phi(x) \der x \int_0^\infty y^{\gamma + \lambda } \phi(y) \der y  - \int_0^z x^{\gamma +\lambda }\phi(x) \der x\right)   \\
& \leq \frac{1}{\int_0^\infty y^{\gamma + \lambda } \phi(y) \der y } \left( \int_0^\infty y^{\gamma + \lambda } \phi(y) \der y  - \int_0^z x^{\gamma +\lambda }\phi(x) \der x\right)   \\ 
& \leq  \frac{1}{\int_0^\infty y^{\gamma + \lambda } \phi(y) \der y }  \int_z^\infty y^{\gamma + \lambda } \phi(y) \der y \leq C(\rho(M_{\gamma + \lambda })) e^{- M_2 z },
\end{align*}
where $M_2 >0$.

We deduce that for large values of $z $
\begin{align*}
\left( \frac{2z^2 }{1-\gamma} - \frac{z^{\gamma +\lambda +1 }}{\int_0^\infty y^{\gamma + \lambda } \phi(y) \der y} \right)  \phi(z) \leq c \max\{e^{-z M_1}, e^{-z M_2}  \}.
\end{align*}
The conclusion follows since, for every $z > \left(\frac{1-\gamma}{2 \int_0^\infty x^{\gamma+\lambda} \Phi(\der x)}  \right)^{\frac{1}{1-\gamma-\lambda}}$, we have
\begin{align*}
     \frac{2z^2 }{1-\gamma} - \frac{z^{\gamma +\lambda +1 }}{\int_0^\infty y^{\gamma + \lambda } \phi(y) \der y}>0.
\end{align*}
\end{proof}

\section{Non-existence results and properties of the self-similar profiles} \label{sec:non existence}
To study the non-existence of the self-similar solutions we proceed by contradiction and start by assuming that a self-similar solution exists.
To find a contradiction we need to analyse the properties of each self-similar profile.
In the case $\gamma + 2 \lambda >1 $ the fundamental properties that we prove to be true are that $\Phi((0, \delta))=0$ for some positive $\delta>0$ and that $\Phi$ decays sufficiently fast for large sizes.
When $\gamma+2\lambda =1$, it is possible to prove that $\Phi$ decays fast for large sizes, but we have not been able to prove that $\Phi((0,\delta))=0$ for some $\delta>0$.
This is the reason why we require the additional condition \eqref{-lambda moment} in Theorem \ref{thm:non-existence}.

\subsection{Properties of the self-similar profiles}
\label{sec:properties eqch}
In this section we study the properties of each self-similar solution that do not rely on the existence of the self-similar solution and hence do not rely on the assumption $\gamma >-1$. 
\begin{theorem}[Properties of the self-similar profiles] \label{thm:properties}
Let $K$ be a homogeneous symmetric coagulation kernel, of homogeneity $\gamma$, satisfying
\eqref{kernel bounds},  \eqref{kernel cont},  with $\gamma , \lambda $ such that  \eqref{avoid_gelation_parameters} holds. 
\begin{enumerate}
\item If $\gamma+2\lambda > 1$, 
then every self-similar profile $\Phi$ as in Definition \ref{def:self-similar profile} is such that $\Phi((0, \rho(M_{\gamma+\lambda}) ))=0$ where $\rho(M_{\gamma+\lambda}) $ is given by \eqref{rhoM}.  
Additionally, $\Phi$ is such that
\begin{equation} \label{eq expo bound}
\int_{\mathbb R_*} e^{Lx } \Phi(\der x) < \infty 
\end{equation}
for $L>0$ and it is absolutely continuous with respect to the Lebesgue measure. 
Its density $\phi $ is such that 
\begin{equation}\label{eq expo decay}
\limsup_{x \in \mathbb R_*} \phi(x) e^{ \overline{L} x} \leq c 
\end{equation} 
for constants $\overline{L} , c>0$. 
\item If $\gamma+ 2\lambda =1 $ and if, in addition, $\Phi((0, \delta))=0$ for some $\delta>0$, then $\Phi([\delta, \rho(M_{\gamma+\lambda})))=0$ where $ \rho(M_{\gamma+\lambda})$ is given by \eqref{rhoM}. In addition, \eqref{eq expo bound} and \eqref{eq expo decay} hold. 
\end{enumerate} 
\end{theorem}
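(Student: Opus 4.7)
Both parts of Theorem \ref{thm:properties} reduce to the following observation: once one knows that $\Phi((0,\delta))=0$ for \emph{some} $\delta>0$, the remaining conclusions follow from results already in the paper. Indeed, the Gr\"onwall iteration at the end of the proof of Theorem \ref{thm:existence}, which uses only the strict positivity of $W(\xi):=\frac{\xi^{\gamma+\lambda}}{M_{\gamma+\lambda}}-\frac{2\xi}{1-\gamma}$ on $(0,\rho(M_{\gamma+\lambda}))$ together with the weak form \eqref{weak steady state eq}, applies to any self-similar profile in the sense of Definition \ref{def:self-similar profile} and propagates the vanishing from $\delta$ out to $\rho(M_{\gamma+\lambda})$. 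Once $\Phi((0,\rho(M_{\gamma+\lambda})))=0$, Propositions \ref{prop:regularity}, \ref{prop:moment bounds}, Lemma \ref{lemma:expo bound}, and Proposition \ref{prop:expo decay} give the absolute continuity on $[\rho(M_{\gamma+\lambda}),\infty)$ and the exponential bounds \eqref{eq expo bound}, \eqref{eq expo decay} verbatim. In Part 2 the hypothesis $\Phi((0,\delta))=0$ is assumed outright, so this bookkeeping closes the argument; in Part 1 the extra work is to \emph{produce} such a $\delta$ from the strict condition $\gamma+2\lambda>1$.

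To produce $\delta$ in Part 1, I will proceed in two steps. First, I will show that $\Phi$ restricted to $(0,\rho(M_{\gamma+\lambda}))$ is absolutely continuous with respect to Lebesgue measure, with density $\phi$. The computation of Proposition \ref{prop:regularity} shows that $W(\xi)\Phi$ has distributional derivative $\frac{1+\gamma}{1-\gamma}\Phi+\mathbb{K}[\Phi]$, so absolute continuity can fail only through atoms of $\Phi$ on $(0,\rho(M_{\gamma+\lambda}))$; but an atom at a point $\xi_0$ would, through the convolution structure of the gain term of $\mathbb{K}[\Phi]$, force atoms at $2\xi_0,\,3\xi_0,\ldots$, contradicting $M_1(\Phi)<\infty$. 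Second, using the asymptotics $K(\xi,y)\sim \xi^{-\lambda}y^{\gamma+\lambda}$ for $\xi\ll y$ guaranteed by \eqref{kernel bounds} and \eqref{PhiAsymp}, the flux identity in the small-$\xi$ regime reduces, for $\psi(\xi):=\xi^{\gamma+\lambda}\phi(\xi)$, to a Volterra inequality of the form
\begin{equation*}
\psi'(\xi)\geq -M_{\gamma+\lambda}^{2}\,\xi^{-(\gamma+2\lambda)}\psi(\xi)-R(\xi),
\end{equation*}
where $R(\xi)$ collects the subleading gain and loss contributions from coagulation on $(0,\xi)$. Since $\gamma+2\lambda>1$, the primitive of $\xi^{-(\gamma+2\lambda)}$ diverges at $0$, and a Gr\"onwall comparison yields the lower bound
$\psi(\xi)\geq \psi(\xi_0)\exp\!\bigl(c(\xi^{1-\gamma-2\lambda}-\xi_0^{1-\gamma-2\lambda})\bigr)$
for $\xi<\xi_0$. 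Because $1-\gamma-2\lambda<0$, this quantity blows up as $\xi\to 0^{+}$ faster than any polynomial, so $\phi(\xi)=\xi^{-(\gamma+\lambda)}\psi(\xi)$ cannot be compatible with the integrability $\int_{(0,1)}x^{1-\lambda}\Phi(\der x)<\infty$ required by Definition \ref{def:self-similar profile} unless $\psi(\xi_0)=0$. Running this at every small $\xi_0$ yields $\phi\equiv 0$ on some $(0,\delta)$, and the first paragraph then closes Part 1.

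The main obstacle will be making the Volterra comparison rigorous without circularity. The remainder $R(\xi)$ contains both the subleading part of the loss $\phi(\xi)\int K(\xi,y)\Phi(\der y)$ beyond its $M_{\gamma+\lambda}\xi^{-\lambda}$ leading order and the gain $\tfrac12\int_0^\xi K(\xi-y,y)\phi(\xi-y)\phi(y)\,\der y$, and both involve $\phi$ on $(0,\xi)$ itself. I plan to treat $R(\xi)$ as a small perturbation by controlling the a priori small quantity $\int_{(0,\xi)} y^{1-\lambda}\phi(y)\,\der y\to 0$ as $\xi\to 0^{+}$, which holds by Definition \ref{def:self-similar profile}, and by absorbing the subleading loss into the dominant one through the pointwise bound $W(\xi)\leq \xi^{\gamma+\lambda}/M_{\gamma+\lambda}$; once $R(\xi)$ is small relative to $\xi^{-(\gamma+2\lambda)}$ on a sufficiently small interval $(0,\xi_0)$, the Gr\"onwall step closes and the integrability contradiction goes through. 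With that, the rest of Part 1 and all of Part 2 amount to the chain of propositions described in the first paragraph.
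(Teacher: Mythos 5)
Your reduction of Part 2, and of the tail of Part 1, to results already in the paper is exactly how the paper argues: once $\Phi((0,\delta))=0$ for some $\delta>0$, the Gr\"onwall iteration from the end of the proof of Theorem \ref{thm:existence} pushes the vanishing out to $\rho(M_{\gamma+\lambda})$, and Propositions \ref{prop:regularity}, \ref{prop:moment bounds}, Lemma \ref{lemma:expo bound} and Proposition \ref{prop:expo decay} give absolute continuity and the exponential bounds. The gap is in Part 1, namely producing such a $\delta$ for an \emph{arbitrary} profile when $\gamma+2\lambda>1$; in the paper this is the content of Proposition \ref{prop:support}, and it is the real work of the theorem. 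Your substitute argument has two concrete defects. First, the regularity step: an atom of $\Phi$ at $\xi_0$ makes the gain part of $\mathbb{K}[\Phi]$ carry an atom at $2\xi_0$, but in the identity $\partial_\xi\bigl(W(\xi)\Phi\bigr)=\frac{1+\gamma}{1-\gamma}\Phi+\mathbb{K}[\Phi]$ an atom on the right-hand side forces a \emph{jump of the flux} $W(\xi)\Phi$, not an atom of $\Phi$ at $2\xi_0$; moreover $M_1(\Phi)<\infty$ is not part of Definition \ref{def:self-similar profile}, and even a chain of atoms at $k\xi_0$ with summable masses would not contradict it. (The paper instead gets absolute continuity on $(0,\overline\xi)$ in Step 1 of Proposition \ref{prop:support}, by testing \eqref{weak steady state eq} with $\xi\varphi(\xi)$ and showing the flux measure $W[\Phi]$ has a BV density, while the coefficient $\frac{x^{\gamma+\lambda+1}}{M_{\gamma+\lambda}}-\frac{2x^2}{1-\gamma}$ is bounded below on compact subsets of $(0,\overline\xi)$.)

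Second, and more seriously, your Volterra/Gr\"onwall step runs in the wrong direction. From $\psi'(\xi)\geq -M_{\gamma+\lambda}^{2}\xi^{-(\gamma+2\lambda)}\psi(\xi)-R(\xi)$ Gr\"onwall only yields the useless upper bound $\psi(\xi)\leq \bigl(\psi(\xi_0)+\int_\xi^{\xi_0}R\bigr)\exp\bigl(\int_\xi^{\xi_0}M_{\gamma+\lambda}^2 s^{-(\gamma+2\lambda)}\der s\bigr)$; the blow-up \emph{lower} bound $\psi(\xi)\geq\psi(\xi_0)\exp\bigl(c\,\xi^{1-\gamma-2\lambda}\bigr)$ requires the reverse inequality $\psi'\leq -A\psi+\tilde R$. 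Obtaining that direction forces you to bound from above, pointwise, the gain term $\tfrac12\int_0^\xi K(\xi-y,y)\phi(\xi-y)\phi(y)\der y$ and the transport correction coming from $-\frac{2}{1-\gamma}\xi\partial_\xi\Phi$, and the integral information $\int_{(0,\xi)}y^{1-\lambda}\phi(y)\der y\to 0$ does not control a pointwise product $\phi(\xi-y)\phi(y)$. Finally, a purely local comparison never addresses the possibility of a nonzero particle flux entering at $\xi=0$: identifying $L=\lim_{\xi\to0^+}W_\phi(\xi)$ and proving $L=0$ is exactly Steps 2--3 of Proposition \ref{prop:support} (a rescaling argument showing $\int J_\phi(\varepsilon\xi)\varphi(\xi)\der\xi\to L\int\varphi$, then direct estimates using the moments in Definition \ref{def:self-similar profile}), and only with $L=0$ does the integrated identity \eqref{ODE for the support}, together with $\gamma+2\lambda>1$ (which makes $C_+\xi^{1-\lambda}$ dominate $\xi^{\gamma+\lambda}/M_{\gamma+\lambda}$ near the origin) and the subsequent ODE for $H$, give $\Phi((0,\overline\xi))=0$. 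As written, your proposal does not close this part of the proof.
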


First of all we prove, see Proposition \ref{prop:support}, that when $\gamma + 2 \lambda >1$, each solution of equation \eqref{eq:ss} in the sense of Definition \ref{def:self-similar profile} is zero near the origin. 
The statement of Theorem \ref{thm:properties} then follows by Proposition \ref{prop:regularity} and Proposition \ref{prop:expo decay} and by the fact that when $\gamma+\lambda=1 $ we are assuming that there exists a $\delta>0$ such that $\Phi((0, \delta))=0$ .
\begin{proposition}[Support of the self-similar solution]\label{prop:support}
Let $K$ be a homogeneous symmetric coagulation kernel, of homogeneity $\gamma$, satisfying
\eqref{kernel bounds},  \eqref{kernel cont},  with $\gamma , \lambda $ such that  \eqref{avoid_gelation_parameters} holds and such that
$\gamma+2\lambda > 1.$
	Let $\Phi \in \mathcal M_+(\mathbb R_*)$ be a self-similar profile as in Definition \ref{def:self-similar profile}. 
	Then $	\Phi((0, \overline \xi ))=0$ for 
	\[
	\overline \xi := \min\left\{ \left( \frac{1}{M_{\gamma+\lambda}(\Phi)} \right)^{\frac{1}{1-\gamma-\lambda}} , \left( \frac{1-\gamma}{ 2M_{\gamma+\lambda}(\Phi)} \right)^{\frac{1}{1-\gamma-\lambda}} \right\}.
	 \]
\end{proposition}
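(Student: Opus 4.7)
The plan is to prove the vanishing $\Phi((0, \overline\xi)) = 0$ by combining a flux-form identity for \eqref{eq: ss strong} with a Gr\"onwall-type bootstrap. First, I would derive the distributional identity
\[
\partial_z\!\left[\left(\frac{z^{\gamma+\lambda}}{M_{\gamma+\lambda}(\Phi)} - \frac{2z}{1-\gamma}\right)\Phi\right] = \frac{1+\gamma}{1-\gamma}\Phi + \mathbb{K}[\Phi]
\]
by testing the weak equation \eqref{weak steady state eq} against a mollification of the indicator of an interval $(\epsilon, z)$ for $0 < \epsilon < z < \overline\xi$, and then integrating and letting $\epsilon \to 0$. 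The moment condition $\int_{(0,1)} x^{1-\lambda}\Phi(\der x) < \infty$ from Definition \ref{def:self-similar profile} is what is used to show that the boundary contribution at $0$ vanishes in the limit. The coefficient $V(z) := z^{\gamma+\lambda}/M_{\gamma+\lambda} - 2z/(1-\gamma)$ is strictly positive on $(0, \overline\xi)$ by the very definition of $\overline\xi$ as a minimum of the two thresholds.

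Next, I would use this identity to show that $\Phi((0,\delta)) = 0$ for some small $\delta > 0$. At small $x$, the coagulation loss at size $x$ behaves like $x^{-\lambda}\, M_{\gamma+\lambda}\,\Phi$, which under the assumption $\gamma + 2\lambda > 1$ dominates the positive transport $V(z)\Phi$ of order $x^{\gamma+\lambda}\Phi$ as $x\to 0$, and this imbalance forces $\Phi$ to vanish in a neighbourhood of $0$. Once this is established, Proposition \ref{prop:regularity} guarantees that $\Phi$ admits a density $\phi$ on $[\delta, \infty)$, and for $x \in [\delta, \overline\xi)$ the flux identity specialises to a pointwise estimate of the form
\[
c\,\phi(x) \leq \widetilde c \int_\delta^x \phi(\eta)\,\der \eta,
\]
because the coagulation gain at size $x \in (\delta, \overline\xi)$ involves only the (locally bounded) restriction of $\phi$ to $[\delta, x - \delta]$ and the linear term is harmless on this interval. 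Gr\"onwall's inequality then yields $\phi \equiv 0$ on $[\delta, \overline\xi)$, and iterating (shrinking $\delta$ as needed) extends the vanishing set to the full interval $(0, \overline\xi)$, exactly as sketched at the end of the proof of Theorem \ref{thm:existence}.

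The main obstacle is rigorously handling the boundary behaviour at $x = 0$, since $\Phi$ is only a Radon measure there and the coefficient $z^{\gamma+\lambda}$ may blow up as $z \to 0^+$ (when $\gamma+\lambda < 0$). The validity of the flux identity in the required sense depends on showing that all three terms (transport, linear, and coagulation) produce integrable boundary contributions that vanish in the limit $\epsilon \to 0$. This is precisely where the moment bounds built into Definition \ref{def:self-similar profile} and the kernel bounds \eqref{kernel bounds} are needed, together with the strict inequality $\gamma + 2\lambda > 1$, which provides the crucial small-$x$ dominance of the loss term over the transport term used to initiate the bootstrap.
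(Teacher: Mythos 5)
Your overall skeleton (pass to a flux form, show $\Phi$ vanishes near the origin, then propagate the vanishing up to $\overline \xi$ by a Gr\"onwall-type argument) is the same as the paper's, and the last step is unproblematic once a $\delta>0$ with $\Phi((0,\delta))=0$ is available. The gap is in the middle step, which is the actual content of Proposition \ref{prop:support}. Your stated mechanism compares the loss term, of order $x^{-\lambda}M_{\gamma+\lambda}\Phi$, with the drift coefficient $V(x)\Phi \approx x^{\gamma+\lambda}\Phi/M_{\gamma+\lambda}$ and claims this "imbalance forces" vanishing. That comparison only uses $\gamma+2\lambda>0$ (since $x^{-\lambda}\gg x^{\gamma+\lambda}$ as $x\to 0$ exactly when $\gamma+2\lambda>0$), so it cannot be the mechanism that singles out the threshold $\gamma+2\lambda>1$ of the statement; in the complementary range the same pointwise dominance holds, yet there profiles behaving like $\xi^{-(3+\gamma)/2}$ near the origin are expected (cf. \cite{ferreira2021self}). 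Moreover the loss term and the derivative $\partial_x\bigl(V\Phi\bigr)$ are not pointwise comparable, so "dominance" by itself is not an argument. The comparison that actually works, and where $\gamma+2\lambda>1$ genuinely enters, is in integrated (flux) form: the coagulation flux across $z$ carried by small-with-large collisions is bounded below by $C_+\int_0^z x^{1-\lambda}\Phi(\der x)$ (because $\int_z^\infty y^{\gamma+\lambda}\Phi(\der y)\ge C_+>0$ for small $z$), and this must balance the drift source $\frac{1}{M_{\gamma+\lambda}}\int_0^z \xi^{\gamma+\lambda}\Phi(\der \xi)$; since $1-\lambda<\gamma+\lambda$ precisely when $\gamma+2\lambda>1$, the flux term wins as $z\to 0$ and yields the contradiction.

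Second, your treatment of the boundary behaviour at $0$ is asserted rather than proved, and it is the hardest point. To use the identity $J_\Phi(z)+\bigl(\tfrac{z^{\gamma+\lambda+1}}{M_{\gamma+\lambda}}-\tfrac{2z^2}{1-\gamma}\bigr)\Phi(z)=\int_0^z\bigl(\tfrac{\xi^{\gamma+\lambda}}{M_{\gamma+\lambda}}-\xi\bigr)\Phi(\der \xi)$ one must know that the total flux tends to $0$ as $z\to 0^+$. The moment bound $\int_{(0,1)}x^{1-\lambda}\Phi(\der x)<\infty$ does not give this directly: for instance the part of $J_\Phi$ coming from $x^{1+\gamma+\lambda}y^{-\lambda}$ with $y$ small is not controlled by it, since no bound on $\int_{(0,1)}y^{-\lambda}\Phi(\der y)$ is assumed when $\gamma+2\lambda>1$. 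The paper devotes Steps 1--3 of its proof precisely to this: monotonicity of the flux $W_\phi$ on $(0,\overline\xi)$ (this is where the specific value of $\overline\xi$ is used) gives existence of a limit $L\ge 0$ at $0^+$; a rescaling argument identifies $L$ with the weak limit of $J_\phi(\varepsilon\,\cdot)$; and a bilinear estimate using both conditions in \eqref{minimal boundary cond for ss} shows that this limit is zero. Without $L=0$ the integrated identity carries an extra nonnegative constant on the right-hand side and the contradiction argument collapses. As written, your proposal therefore has a genuine gap at its central step, even though its outer structure and the final Gr\"onwall extension agree with the paper.
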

\begin{proof}
We start explaining the proof in a heuristic manner without entering in the technical details that will be explained later. 
Equation \eqref{eq:ss} can be rewritten in the following flux form
\begin{equation} \label{eq:ss flux}
 \partial_x \left( J_\Phi (x) + \left( \frac{x^{\gamma+\lambda +1 } }{ M_{\gamma + \lambda } } - \frac{2 x^2 }{1- \gamma }\right) \Phi(x ) \right)  =\left(  \frac{x^{\gamma+\lambda}}{M_{\gamma+\lambda}}  -x \right) \Phi(x ), 
\end{equation} 
where $J_\phi$ is given by \eqref{flux def}.
For $x$ smaller than $\overline \xi $ we have that $ \frac{x^{\gamma+\lambda}}{M_{\gamma+\lambda}}  -x  \geq 0$
and hence the function 
\[
x \mapsto J_\Phi (x) + \left( \frac{x^{\gamma+\lambda +1 } }{ M_{\gamma + \lambda } } - \frac{2 x^2 }{1- \gamma }\right) \Phi(x ) 
\] 
is increasing and right-continuous at $x=0$.
This implies that 
\[
\lim_{ x \rightarrow 0^+ } \left( J_\Phi (x) + \left( \frac{x^{\gamma+\lambda +1 } }{ M_{\gamma + \lambda } } - \frac{2 x^2 }{1- \gamma }\right) \Phi(x ) \right) =L.
\] 
Since for every
$x \leq \overline \xi $ 
we also have that 
$ \frac{x^{\gamma+\lambda +1 } }{ M_{\gamma + \lambda } } - \frac{2 x^2 }{1- \gamma } \geq 0
$ 
we deduce that $L \geq 0.$

We make a scaling argument to identify the value of $L. $
To this end we notice that the units of the flux are given by $[J_\Phi(x)] =[\Phi]^2 [x]^{3+\gamma }$, where from now on we indicate with $[\cdot]$ the dimensional properties of a quantity, hence
\[
[\Phi ] =\left[x\right]^{-\frac{(3+ \gamma) }{2}}.
\] 
We deduce that 
\[
[x^{\gamma + \lambda +1} \Phi  ] = [x^{\gamma + \lambda +1} ]  [\Phi] =[x^{\frac{\gamma + 2 \lambda -1}{2}}  ].
\] 
Since $\gamma + 2 \lambda > 1 $ this implies that as $x \rightarrow 0 $ the dominant term in equation \eqref{eq:ss flux} is $J_\Phi$, hence 
$J_\Phi(x)\sim L$ as $x \rightarrow 0$. Finally, we prove that $L=0$, in agreement with the statement proven in \cite{ferreira2019stationary} that, when $\gamma + 2 \lambda \geq 1 $, there are no solutions to the constant flux equation. 
Integrating \eqref{eq:ss flux}, we deduce that
\[
 J_\Phi (x) + \left( \frac{x^{\gamma+\lambda +1 } }{ M_{\gamma + \lambda } } - \frac{2 x^2 }{1- \gamma }\right) \Phi(x )  = \int_0^x \left(  \frac{z^{\gamma+\lambda}}{M_{\gamma+\lambda}}  -z \right) \Phi(z) \der z .
\] 
A detailed analysis of this ODE for $\Phi$ implies that 
$\Phi=0$ on the interval $\left(0, \left( \frac{1-\gamma }{2 M_{\gamma +\lambda} } \right)^{\frac{1}{1-\gamma-\lambda } } \right) $, see Step 4. 

We now explain the proof in detail. 
Testing (\ref{weak steady state eq}) with a function of the form $\psi(\xi)=\xi\varphi(\xi),$ where $\varphi\in\textup{C}^{1}_{c}(\mathbb R_*),$ we get to the following equation:
\begin{align}\label{weak flux form}
\int_{(0,\infty)}&\left( \xi -\frac{1}{M_{\gamma+\lambda}} \xi^{\gamma+\lambda} \right) \varphi(\xi)\Phi(\der \xi )
 =\int_{(0,\infty)}\varphi'(\xi)W[\Phi](\der \xi ) ,
\end{align}
where
\begin{align} \label{def W}
   W[\Phi](\der x)= J_\Phi (x) \der x + \left( \frac{x^{\gamma+\lambda +1 } }{ M_{\gamma + \lambda } } - \frac{2 x^2 }{1- \gamma }\right) \Phi(\der x ).
\end{align}

\textbf{Step 1: Regularity of $W[\Phi]$ and an integral representation formula}\\

First of all we prove that, for every $\overline \delta >0$, the restriction of the measure $W[\Phi] $ on the interval $[\overline \delta, 1 ]$ is absolutely continuous with respect to the Lebesgue measure. 
This follows from the fact that for every function $\varphi \in C^1_c([\overline \delta, \overline \xi ])$ with $ 0 < \overline \delta < 1$ we have that 
\[
\int_{\mathbb R_*} \varphi'(\xi)W[\Phi](\der \xi ) \leq c(\Phi, \delta, \gamma, \lambda)  \| \varphi \|_\infty; 
\]
hence $W[\Phi]$, on the interval $[\overline \delta,\overline \xi]$, has a density $W_\phi \in BV([\overline \delta, \overline \xi] )$, see \cite{gariepy2001functions}. 
Since $\overline \delta $ is arbitrary we deduce that $W[\Phi]$ has a density $W_\phi \in L^{1}_{loc}((0, \overline \xi] )$.
This implies that the measure $\Phi_{| (0, \overline \xi ]}$ is absolutely continuous with respect to the Lebesgue measure, its density is $\phi$. 

We now prove that $ W_\phi$ is increasing on $(0, \overline \xi]$.
To this end we consider a sequence of functions $ \{\varphi_n \}_n \subset C^1_c(\mathbb R_* ) $ such that $\varphi_n \rightarrow \chi_{[\xi_1, \xi_2]} $ pointwise, with $0<\xi_1, \xi_2 < \overline \xi$, $\varphi_n '(\xi) \geq 0 $ for $\xi \in \left(0, \frac{\xi_1+\xi_2}{2}\right)$ and such that $\varphi_n '(\xi) \leq 0 $ for $\xi \in \left( \frac{\xi_1+\xi_2}{2}, \infty\right)$. 
Substituting $\varphi_n$ in \eqref{weak flux form} we deduce that for every $n\geq 1$ 
\begin{align*}
\int_0^\infty&\xi \varphi_n(\xi)\phi(\xi ) \der \xi 	-\frac{1}{M_{\gamma+\lambda}} \int_0^\infty \varphi_n(\xi) \xi^{\gamma+\lambda}\phi(\xi ) \der \xi 
 =\int_0^\infty \varphi_n'(\xi)W_\phi( \xi ) \der \xi. 
\end{align*}
Notice that 
\[
\int_0^\infty \xi \varphi_n(\xi)\phi(\xi ) \der \xi \rightarrow \int_{[\xi_1, \xi_2 ] } \xi \phi(\xi ) \der \xi  \quad \text{ as } \quad n \rightarrow \infty
\] 
and that 
\[
\int_0^\infty \xi^{\gamma+\lambda} \varphi_n(\xi) \phi(\xi ) \der \xi \rightarrow \int_{[\xi_1, \xi_2 ] } \xi^{\gamma+\lambda} \phi(\xi ) \der \xi  \quad \text{ as } \quad n \rightarrow \infty.
\]
Additionally, since $W_\phi$ is in $L^1_{loc}((0, \overline \xi))$ and satisfies \eqref{weak flux form} we deduce that $W_\phi \in W^{1}_{loc}(0, \overline \xi)$ and hence is continuous in $(0, \overline \xi)$. Hence we  have that 
\[
\int_{(0,\infty)}\varphi_n'(\xi)W_\phi( \xi ) \der \xi \rightarrow W_\phi (\xi_1) - W_\phi (\xi_2) \quad \text{ as } \quad n \rightarrow \infty.
\] 
As a consequence, we deduce that 
\begin{align} \label{monotonicity}
W_\phi (\xi_2) - W_\phi (\xi_1) = \int_{[\xi_1, \xi_2] } \left(\frac{1}{M_{\gamma +\lambda} } \xi^{\gamma + \lambda}  - \xi \right) \phi(\xi) \der \xi.
\end{align} 
Since if $\xi \leq \overline \xi $, then 
$\frac{1}{M_{\gamma +\lambda} } \xi^{\gamma + \lambda} - \xi \geq 0$,
we deduce that there exists a $\overline \xi $ such that $W_\phi $ is increasing in $(0, \overline \xi]$. 
Hence, since $W_\phi$ is bounded, we have that the following limit exists
\[
0 \leq L:= \lim_{\xi \rightarrow 0^+} W_\phi (\xi) < \infty.
\]

Due to \eqref{minimal boundary cond for ss} we can take the limit as $\xi_1 \rightarrow 0$ in \eqref{monotonicity} and deduce that 
\begin{align} \label{eq W}
W_\phi (\xi_2) - L  = \int_{(0, \xi_2] } \left(\frac{1}{M_{\gamma +\lambda} } \xi^{\gamma + \lambda} - \xi \right) \phi(\xi) \der \xi. 
\end{align} 

\textbf{Step 2: we prove that $\int_{\mathbb R_*} J_\phi(\varepsilon x) \varphi(x) \der x \rightarrow L  \int_{\mathbb R_*}\varphi(x) \der x$ }\\

Since $W_\phi$ is bounded  in $(0, \overline \xi ] $ and $\frac{1}{M_{\gamma+\lambda}} \xi^{\gamma+\lambda} -\frac{2}{1-\gamma} \xi \geq 0$ for every $\xi \leq \overline \xi$ we have that
 $J_\phi $ is also bounded in $(0, \overline \xi ] $.
Since 
\begin{equation} \label{Omega}
 [2/3 z , z ] \times [2/3 z , z ] \subset \left\{ x \in \mathbb R_*^2 : 0 <  x \leq z , z-x \leq  y < \infty    \right\}=:\Omega_z
\end{equation}
we deduce that for every $z \in (0, \overline \xi ] $ we have that
\[
z^{\gamma + 1 } \left( \int_{2z/3 }^z \phi(x) \der x \right)^2 \leq J_\phi(z) \leq C 
\] 
hence 
\begin{equation}\label{bound in mean}
\frac{1}{z}  \int_{2z/3}^z \phi(x) \der x \leq  \frac{C}{z^{\frac{\gamma + 3}{2} }} \quad z \in (0, \overline \xi ]. 
\end{equation} 

Equation \eqref{eq W} implies that 
\begin{align} \label{eq for Fepsilon}
\int_{\mathbb R_*} \varphi\left(x\right) J_{\phi}(\varepsilon x) \der x  & =\frac{1}{\varepsilon}\int_{\mathbb R_*} \varphi\left(\frac{x}{\varepsilon}\right) J_{\phi}( x) \der x = - \frac{1}{\varepsilon} \int_{\mathbb R_*} \varphi\left(\frac{x}{\varepsilon}\right) \left[ \frac{x^{\gamma+\lambda+1}}{M_{\gamma+\lambda}}- \frac{2}{1-\gamma}x^2\right] \phi(x) \der x \\
&  + \frac{1}{\varepsilon} \int_{\mathbb R_*} \varphi\left(\frac{x}{\varepsilon}\right) \int_0^x \left[ \frac{z^{\gamma+\lambda}}{M_{\gamma+\lambda}}- z\right] \phi(z) \der z  \der x  + L \int_{\mathbb R_*} \varphi(x) \der x \nonumber 
\end{align}
where $\varphi \in C_c((0, \overline \xi))$ and where $\varepsilon >0$. 

Thanks to the bounds \eqref{minimal boundary cond for ss} for $\phi$ and to the fact that $\varphi$ is compactly supported we can pass to the limit as $\varepsilon \rightarrow 0$ in equation \eqref{eq for Fepsilon} and deduce that 
\begin{align*}
\lim_{\varepsilon \rightarrow 0 } \int_{\mathbb R_*} J_{\phi}(\varepsilon \xi)  \varphi(\xi) \der \xi = L \int_{\mathbb R_*} \varphi(x) \der x.
\end{align*}
for every $\varphi \in C^1_c((0, \overline \xi))$. \\

\textbf{Step 3: we prove that $L=0$} \\

We want to prove that for every $\varphi \in C^1_c((0, \overline \xi))$
\[
\lim_{\varepsilon \rightarrow 0 } \int_{\mathbb R_*} J_{\phi}(\varepsilon \xi)  \varphi(\xi) \der \xi =0. 
\]
Using the formula for the fluxes we obtain that 
\begin{align*}
 \frac{1}{\varepsilon}  \int_{\mathbb R_*} J_{\phi}( \xi)  \varphi\left(\frac{\xi}{\varepsilon}\right) \der \xi &= \int_{\mathbb R_*}  \varphi(\xi)  J_{\phi}(\varepsilon \xi) \der \xi 
\\
&= \int_{\mathbb R_*}  \int_{\mathbb R_*} x K(x,y) \left( \psi\left(\frac{x+y}{\varepsilon}\right) - \psi\left(\frac{x}{\varepsilon}\right) \right) \Phi (\der x )  \Phi( \der y )\\
&=I_1 + I_2
\end{align*}
where $\psi(x)= \int_0^x \varphi(y) \der y$
and where 
\[
I_1 = \int_{\mathbb R_*}  \int_{(y,\infty)} x K(x,y) \left( \psi\left(\frac{x+y}{\varepsilon}\right) - \psi\left(\frac{x}{\varepsilon}\right) \right) \Phi (\der x )  \Phi( \der y )
\]
and
\[
I_2 = \int_{\mathbb R_*}  \int_{(0, y)} x K(x,y) \left( \psi\left(\frac{x+y}{\varepsilon}\right) - \psi\left(\frac{x}{\varepsilon}\right)\right) \Phi (\der x )  \Phi( \der y ).
\]

Since $\varphi$ is compactly supported we have, by definition, that $ \supp \left[ \psi\left(x+y\right) - \psi\left(x\right) \right] \subset \left\{ (x,y) \in \mathbb R_*^2 : x \leq  R , \ x+y \geq \overline \delta >0 \right\} $
for suitable $\overline \delta >0$ and $R>0.$
As a consequence, using \eqref{minimal boundary cond for ss} we deduce that 
\begin{align*}
 |I_1 | & \leq c\int_{(0, \varepsilon R)} \int_{(\overline \delta\varepsilon/2, \varepsilon R)} y^{-\lambda} x^{1+\gamma+\lambda } \left| \psi\left(\frac{x+y}{\varepsilon}\right) - \psi\left(\frac{x}{\varepsilon}\right) \right| \Phi(\der x) \Phi(\der y) \\
&\leq C(\psi) \frac{1}{\varepsilon}
 \int_{(0, \varepsilon R)} y^{1-\lambda} \Phi(\der y) \int_{(\overline \delta\varepsilon/2, \varepsilon R)}x^{1+\gamma+\lambda } \Phi(\der x) \rightarrow 0 \text{ as } \varepsilon \rightarrow 0.
\end{align*} 
Using again \eqref{minimal boundary cond for ss} we have
\begin{align*}
    | I_2 |& =   \int_{\mathbb R_*}  \int_{(0,y )} x K(x,y) \left| \psi\left(\frac{x+y}{\varepsilon}\right) - \psi\left(\frac{x}{\varepsilon}\right)\right| \Phi (\der x )  \Phi( \der y ) \\
    &\leq C(\psi) \int_{(\varepsilon\delta/2, \infty)} y^{\gamma+\lambda} \Phi( \der y )  \int_{(0, \varepsilon R)} x^{1-\lambda}  \Phi (\der x ) \rightarrow 0 \text{ as } \varepsilon \rightarrow 0. 
\end{align*}
Hence $
\lim_{\varepsilon \rightarrow 0 } \int_{\mathbb R_*} J_{\phi}(\varepsilon \xi)  \varphi(\xi) \der \xi =0$ and therefore $L=0.$\\

\textbf{Step 4: $\Phi$ is zero near zero}\\

Since $L=0$ we deduce by \eqref{eq W} that 
\begin{align} \label{eq W2} 
 W_\phi (z)  = \int_0^z  \left( \frac{1}{M_{\gamma +\lambda} } \xi^{\gamma + \lambda} - \xi \right) \phi(\xi) \der \xi \quad 
 \text{ for } z \in(0, \overline \xi).
\end{align} 
Or, equivalently, since $J_\phi \in BV(\mathbb R_*)$
\begin{align} \label{ODE for the support}
J_\phi (z) + \left( \frac{z^{\gamma + \lambda +1} }{M_{\gamma + \lambda} }- \frac{2 z^2 }{1-\gamma }\right) \phi(z) = \int_0^z  \left( \frac{1}{M_{\gamma +\lambda} } \xi^{\gamma + \lambda} - \xi \right) \phi(\xi) \der \xi \quad 
 \text{ for } a.e. z \in (0, \overline \xi).
\end{align} 

We now rewrite \eqref{ODE for the support} in the following way
\begin{align*} 
&\int_0^z \int_{z-x}^z x K(x,y) \Phi(\der y) \Phi( \der x)  + \int_0^z \int_z^\infty x K(x,y) \Phi(\der y) \Phi(\der x)  \\
&+ \left( \frac{z^{\gamma + \lambda +1} }{M_{\gamma + \lambda} }- \frac{2 z^2 }{1-\gamma }\right) \phi(z) = \int_0^z  \left( \frac{1}{M_{\gamma +\lambda} } \xi^{\gamma + \lambda} - \xi \right) \phi(\xi) \der \xi.
\end{align*} 
If $\Phi$ is different to zero near the origin, then
  \begin{align*}\int_0^z \int_z^\infty x K(x,y)  \Phi( \der x) \Phi(\der y ) & \geq c_1 \int_0^z x^{1-\lambda}  \Phi( \der x) \int_z^\infty y^{\gamma+\lambda}  \Phi( \der y) \\
  &+ c_1 \int_0^z x^{1+\gamma+\lambda}  \Phi(\der x)   \int_z^\infty y^{-\lambda}  \Phi(\der y ) \geq C_+ \int_0^z x^{1-\lambda} \Phi( \der x)
  \end{align*} 
where we have used that for sufficiently small $z$ we have that
\[
\int_z^\infty y^{\gamma+\lambda}  \Phi(\der y)  \geq  C_+
\] 
for a strictly positive constant $C_+$. 

This implies that for $z$ small we have that
  \begin{align*}
0 & =J_\phi(z) + \left( \frac{z^{\gamma + \lambda +1} }{M_{\gamma + \lambda} }- \frac{2 z^2 }{1-\gamma }\right) \phi(z) - \int_0^z  \left( \frac{1}{M_{\gamma +\lambda} } \xi^{\gamma + \lambda} - \xi \right) \phi(\xi) \der \xi \\
& \geq   C_+ \int_0^z x^{1-\lambda} \phi( x) \der x+ \left( \frac{z^{\gamma + \lambda +1} }{M_{\gamma + \lambda} }- \frac{2 z^2 }{1-\gamma }\right) \phi(z) - \int_0^z  \left( \frac{1}{M_{\gamma +\lambda} } \xi^{\gamma + \lambda} - \xi \right) \phi(\xi) \der \xi \\
& \geq \left( \frac{z^{\gamma + \lambda +1} }{M_{\gamma + \lambda} }- \frac{2 z^2 }{1-\gamma }\right) \phi(z) + \int_0^z  \left(  C_+ \xi^{1-\lambda} - \frac{1}{M_{\gamma +\lambda} } \xi^{\gamma + \lambda} \right)  \phi(\xi) \der \xi.
  \end{align*} 
Since we are assuming that $\Phi $ is different from zero near the origin and since $\gamma+2\lambda>1$, we deduce that
$\int_0^z  \left(  C_+ \xi^{1-\lambda} - \frac{1}{M_{\gamma +\lambda} } \xi^{\gamma + \lambda} \right)  \phi(\xi) \der \xi>0$. 
This, together with the fact that for $z \leq \overline \xi$ we have $\left( \frac{z^{\gamma + \lambda +1} }{M_{\gamma + \lambda} }- \frac{2 z^2 }{1-\gamma }\right) \geq 0$ leads to a contradiction. Hence there exists a $\delta>0$ such that $\phi(x)=0$ for $x \in (0, \delta)$. 
We now prove that $\phi(x)=0$ for every $x < \overline \xi$. 
To this end we define the function $H$ as 
\[
H(z):= \int_0^z  \left( \frac{1}{M_{\gamma +\lambda} } \xi^{\gamma + \lambda} - \xi \right) \phi(\xi) \der \xi.
\] 
Due to \eqref{ODE for the support} we have that $H$ satisfies 
\begin{equation} \label{ODE H} 
\frac{d}{dz} H(z)= H(z)Q(z)-J_\phi(z)Q(z) , \quad H(\delta)=0 \quad \text{ for }  \ a.e. \ z\in (\delta, \overline \xi) 
\end{equation}
where 
\begin{equation}\label{def of Q} 
Q(z):=\frac{(1-\gamma ) ( z^{\gamma + \lambda} - M_{\gamma + \lambda}z) }{(1-\gamma ) z^{\gamma + \lambda+1} - 2 M_{\gamma + \lambda } z^2} > 0, \quad   z  \in (\delta ,\overline \xi). 
\end{equation}

The solution of equation \eqref{ODE H} is the function
\begin{equation} \label{H}
H(z)= - \int_\delta^z \frac{J_\phi(s) }{Q(s)} e^{ \int_s^z Q(x) \der x } \der s. 
\end{equation}
 Expression \eqref{H} for $H$ implies that $H$ is negative for $z \in (\delta, \overline \xi)$.
 On the other hand, if we assume that $\phi$ is different from zero in $(0, \overline \xi)$ we deduce that $H$ is positive on $z \in (\delta, \overline \xi)$, by definition. This contradiction implies that $H(z)=0$ for $z \in (\delta,\overline \xi).$
\end{proof}
	
 \begin{remark}
We remark that equation \eqref{eq W2} is valid both for $\gamma + 2 \lambda >1$ and for $\gamma+ 2 \lambda =1$, but the same argument we used to prove that $\Phi((0, \delta))=0$ does not work in the case $\gamma + 2 \lambda =1$.
 \end{remark}

\begin{proof}
[Proof of Theorem \ref{thm:properties}]
As in the proof of Theorem \ref{thm:existence} we prove that if $\Phi((0, \delta))=0$ for some $\delta>0$ then $\Phi((0, \rho(M_{\gamma+\lambda}) ))=0$ for $\rho(M_{\gamma+\lambda}) $ given by \eqref{rhoM}. 
To conclude the proof combine this with  Proposition \ref{prop:regularity} and Proposition \ref{prop:expo decay} with Proposition \ref{prop:support}. 
\end{proof}

\subsection{Non-existence results}
\begin{proof}[Proof of Theorem \ref{thm:non-existence}]
We consider now the case $\gamma + 2\lambda >1$ and $\gamma \leq -1 $ in which $\gamma + \lambda$ could be bigger than zero or smaller than zero, see Figure \ref{fig1}. 
We will provide a proof that relies on the technical proof of Proposition \ref{prop:support}. 
We proceed by contradiction. Assume that a self-similar solution exists. Then thanks to Proposition \ref{prop:support} we know that $\Phi((0, \delta))=0$ for some $\delta>0$. 
Additionally, from Proposition \ref{prop:expo decay} we know that $\lim_{x \rightarrow \infty } \Phi(x)=0$ exponentially.  

We can therefore consider the test function $\varphi\equiv 1$ in equation \eqref{weak steady state eq}. 
This implies 
\begin{align*}
0\leq- \frac{1+\gamma}{1-\gamma} \int_{\mathbb R_*}  \Phi( \der x) 
 =-\frac{1}{2}\int_{\mathbb R_*} \int_{\mathbb R_*} K(x,y)\Phi( \der x) \Phi( \der y)<0,
\end{align*}
which is a contradiction. 

Assume now that $\gamma + 2 \lambda =1 $ and that $\gamma \leq -1$. Notice that in this case we necessarily have that $\gamma+\lambda \leq 0 $. 
Let us assume by contradiction that a self-similar solution exists and is such that
\[
\int_{(0,1]} x^{-\lambda }\Phi(\der x) < \infty.
\]
We show that
\[
\int_{\mathbb R_*} \Phi(\der x ) < \infty.
\] 
Notice that this is true by assumption when $\gamma+\lambda =0$, therefore we restrict the attention to the case $\gamma +\lambda <0.$
To this end we adapt the argument used in the proof of Proposition \ref{prop:moment bounds}, we refer there for the technical aspects, and prove that 
\[
\int_{\mathbb R_*} x^{\gamma+\lambda} \Phi(\der x) < \infty \quad \text{ implies } \quad 
\int_{\mathbb R_*} x^{\gamma+\lambda+\overline \delta } \Phi(\der x) < \infty
\]
when $ 0 < -\gamma-\lambda <\overline \delta < 1-\gamma-\lambda$. To see this we consider $\psi(x)=x^{\gamma+\lambda+\overline \delta -1 } $ in equation \eqref{equality psi} to deduce that
\begin{align*}
     \frac{2\overline \delta }{1-\gamma} M_{\gamma+\lambda+\overline \delta} \leq \frac{M_{2(\gamma+\lambda)+\overline \delta-1}}{M_{\gamma+\lambda}}.
\end{align*}
Now we notice that our choice of $\overline \delta$ implies that 
\[
- \lambda \leq 2(\gamma+\lambda)+\overline \delta-1 \leq \gamma + \lambda.
\]
Hence $M_{2(\gamma+\lambda)+\overline \delta-1}< \infty$ and the desired conclusion follows. 

Similarly, we can prove that 
\[
\int_{\mathbb R_*} x^{\gamma+\lambda+ n \overline \delta} \Phi(\der x) < \infty
\quad \text{ implies } \quad
\int_{\mathbb R_*} x^{\gamma+\lambda+(n+1)\overline \delta } \Phi(\der x) < \infty.
\]
As a consequence, taking $n$ sufficiently large, we deduce that $M_0(\Phi)< \infty$. 
We can therefore consider the test function $\varphi\equiv 1$ in equation \eqref{weak steady state eq}. 
This implies 
\begin{align*}
0\leq- \frac{1+\gamma}{1-\gamma} \int_{\mathbb R_*}  \Phi( \der x) 
 =-\frac{1}{2}\int_{\mathbb R_*} \int_{\mathbb R_*} K(x,y)\Phi( \der x) \Phi( \der y)<0
\end{align*}
which is a contradiction. 
\end{proof}

\subsubsection*{Acknowledgements}
The authors are grateful to A. Nota for many interesting mathematical discussions that led to several of the main ideas contained in this paper.  
The authors would also like to thank
B. Kepka for his help clarifying some technical points of this paper. 
The authors gratefully acknowledge the financial support of the Hausdorff Research Institute for Mathematics (Bonn), through the \textit{Junior Trimester Program on Kinetic Theory}, of the CRC 1060, Project-ID 211504053,
\textit{The mathematics of emergent effects} at the University of Bonn funded through the German
Science Foundation (DFG), of the Bonn International Graduate School of Mathematics at the Hausdorff Center for Mathematics (EXC 2047/1, Project-ID 390685813) funded through the Deutsche Forschungsgemeinschaft (DFG, German Research Foundation),
as well as of the \textit{Atmospheric Mathematics}
(AtMath) collaboration of the Faculty of Science of University of Helsinki, of the ERC Advanced Grant 741487, and of the Academy of Finland, via the  \textit{Finnish centre of excellence in Randomness and STructures} (project No. 346306). 
The funders had no role in study
design, analysis, decision to publish, or preparation of the manuscript.


\bibliographystyle{siam}

\bibliography{bib}

\end{document}